\newtheorem{thm}{Theorem}[section]
\newtheorem{lem}[thm]{Lemma}
\newtheorem{cor}[thm]{Corollary}
\newtheorem{prop}[thm]{Proposition}
\theoremstyle{definition}
\newtheorem{rem}[thm]{Remark}
\newtheorem{defn}[thm]{Definition}
\newtheorem{ex}[thm]{Example}
\numberwithin{equation}{section}
\def\B{{\mathbb B}}
\def\BB{{\mathfrak B}}
\def\F{{\mathbb F}}
\def\Q{{\mathbb Q}}
\def\R{{\mathbb R}}
\def\Z{{\mathbb Z}}
\def\C{{\mathbb C}}
\def\O{{\mathscr O}}
\def\cU{{\mathcal U}}
\def\L{{\mathcal L}}
\def\M{{\mathcal M}}
\def\aa{{\mathfrak a}}
\def\oo{{\mathfrak o}}
\def\OO{{\mathfrak O}}
\def\wL{\widetilde{L}}
\def\CC{{\mathfrak C}}
\def\pp{{\mathfrak p}}
\def\PP{{\mathfrak P}}
\def\qq{{\mathfrak q}}
\def\wV{\widetilde{V}}
\def\wR{\widetilde{\mathbb R}}
\def\val{{\mathrm{val}}}
\def\res{{\mathrm{res}}}
\def\Cl{\mathop{\mathrm{Cl}}\nolimits}
\def\Fin{\mathop{\mathrm{Fin}}\nolimits}
\def\ccSb{\mathop{\mathcal{S}^u_{0,b}}\nolimits}
\def\cF{\mathop{\mathcal{F}}\nolimits}
\def\wf{\mathop{\tilde{f}}\nolimits}
\def\wg{\mathop{\tilde{g}}\nolimits}
\def\>'{\mathop{>'}}
\def\cI{\mathop{\mathcal{I}}\nolimits}
\def\Loc{\mathop{\mathcal{L}oc}\nolimits}
\def\cM{\mathop{\mathcal{M}}\nolimits}
\def\cN{\mathop{\mathcal{N}}\nolimits}
\def\ccSu{\mathop{\mathcal{S}^u_0}\nolimits}
\def\cSub{\mathop{\mathcal{S}^u_b}\nolimits}
\def\cS{\mathop{\mathcal{S}}\nolimits}
\def\ccS{\mathop{\mathcal{S}_0}\nolimits}
\def\cSu{\mathop{\mathcal{S}^u}\nolimits}
\def\AbMon{\mathop{\mathcal{A}b\mathcal{M}on}\nolimits}
\def\ccSr{\mathop{\mathcal{S}^u_{0,\mathrm{ref}}}\nolimits}
\def\cP{\mathop{\mathcal{P}}\nolimits}
\def\cF{\mathop{\mathcal{F}}\nolimits}
\def\wF{\mathop{\widetilde{F}}\nolimits}
\def\Frac{\mathop{\mathrm{Frac}}\nolimits}
\def\mSpec{\mathop{\mathrm{m \text{\rm -}Spec}}\nolimits}
\def\Ram{\mathop{\mathrm{R}^{\infty}_{L/K}}\nolimits}
\def\IF{\mathop{\mathcal{IF}_{\text{\rm fin}}}\nolimits}
\def\Spec{\mathop{\mathrm{Spec}}\nolimits}
\def\Dom{\mathop{\mathcal{D}om}\nolimits}
\def\Gal{\mathop{\mathrm{Gal}}\nolimits}
\def\Hom{\mathop{\mathrm{Hom}}\nolimits}
\def\Lim{\mathop{\mathrm{Lim}}\nolimits}
\def\Max{\mathop{\mathrm{Max}}\nolimits}
\def\Reg{\mathop{\mathrm{Reg}}\nolimits}
\def\Pic{\mathop{\mathrm{Pic}}\nolimits}
\def\Ker{\mathop{\mathrm{Ker}}\nolimits}
\def\Max{\mathop{\rm Max}}
\def\B{\mathbb{B}}
\begin{document}

\title[On ideal class semigroups]
{On infinite extensions of Dedekind domains, 
upper semicontinuous functions 
and the ideal class semigroups}

\author{Tatsuya Ohshita}

\address{Faculty of Science, Ehime University 2--5, Bunkyo-cho, Matsuyama-shi, Ehime 790--8577, Japan}
\email{ohshita.tatsuya.nz@ehime-u.ac.jp}

\date{\today}
\subjclass[2010]{Primary 11R29; Secondary 11R32, 13F05, 20M12}
\keywords{fractional ideal; ideal class semigroup; 
infinite algebraic extension; upper semicontinuous functions}

\begin{abstract}
In this article, we study the monoid of  fractional ideals 
and the ideal class semigroup of an arbitrary given one dimensional 
normal domain $\OO$ obtained by an infinite integral extension of 
a Dedekind domain. 
We introduce a notion of ``upper semicontinuous functions" 
whose domain is the maximal spectrum of $\OO$ 
equipped with 
a certain topology, and whose codomain 
is a certain totally ordered monoid containing $\R$.  
We construct an isomorphism 
between a monoid consisting of such upper semicontinuous functions  
satisfying certain conditions 
and the monoid of fractional ideals of $\OO$. 
This result can be regarded as 
a generalization of the theory of 
prime ideal factorization for Dedekind domains. 
By using such isomorphism, 
we study the Galois-monoid structure of 
the ideal class semigroup of $\OO$.
\end{abstract}

\maketitle

\section{Introduction}\label{secintro}

The ideal class semigroup of a commutative ring, 
which is a quotient of 
the monoid of fractional ideals 
by principal fractional ideals,  has been 
studied by some authors,  
for instance, by 
Bazzoni--Salce (\cite{BS}), 
Bazzoni (\cite{Ba1}--\cite{Ba3}),
Zanardo--Zannier (\cite{ZZ}) and 
Konomi--Morisawa (\cite{KM}). 
In particular, Konomi and Morisawa studied 
the ideal class semigroups
in number theoretic, and Galois theoretic setting: 
In \cite{KM}, they gave a description of the 
structure of the ideal class semigroups 
of the ring of integers in 
the cyclotomic $\Z_p$-extension of $\Q$ 
by using the Picard group, 
which is a group consisting of the classes 
of invertible fractional ideals.
Motivated by their work, 
in this article, 
we study the monoids of fractional ideals 
and the ideal class semigroups of 
arbitrary one dimensional normal domains 
which are obtained by
infinite extension of Dedekind domains.

Let us fix our notation. 
Throughout  this article, 
let $\oo$ be a Dedekind domain which is not a field.  
We denote by $K$ the fractional field of $\oo$. 
Let $L/K$ be an algebraic extension of fields.
We denote by $\OO$ the integral closure of $\oo$ in $L$.
In our setting, 
the extension degree of $L$ over $K$
may be infinite. 
So, the ring $\OO$ is not  Noetherian in general.
In particular, 
the ring $\OO$ may not be a Dedekind domain. 

Let us recall the monoid of fractional ideals. 
We call a non-zero $\OO$-submodule $I$ of $L$ 
\textit{a fractional ideal of $\OO$} if  
there exists an element $d \in L^\times$ 
such that $d I \subseteq \OO$. 
We say that a fractional ideal 
$I$ of $\OO$ is \textit{an invertible fractional ideal} 
(or \textit{an invertible ideal} for short)
if there exists a fractional ideal $J$ of $\OO$ 
satisfying $IJ=\OO$. 
In particular, \textit{principal fractional ideals}, 
which are fractional ideals 
of the form $a \OO$ for some $a \in L^\times$, are invertible.
Note that there may exist a fractional ideal of $\OO$
which is not invertible 
since $\OO$ may not be a Dedekind domain in our setting. 
We denote the set of all fractional 
(resp.\ invertible fractional and principal fractional)
ideals of $\OO$ by $\cF (\OO)$ 
(resp.\ $\cI (\OO)$ and $\cP (\OO)$). 
By the multiplication of ideals, 
the sets $\cF (\OO)$, $\cI (\OO)$ and $\cP (\OO)$
become abelian monoid. 
Moreover, the monoids 
$\cI (\OO)$ and $\cP (\OO)$ are groups.

For a monoid $M$, we say that 
an element $a \in M$ is {\it von Neumann regular} 
(or {\it regular} for short) 
if there exists an element $b \in M$ satisfying 
$a=a ba$. 
\if0
(Note that regular elements of a monoid 
are usually called invertible elements. 
However, ``invertible ideals" 
and ``invertible elements of $\cF (\OO)$" 
are {\it different} notions. Indeed, all invertible ideals 
are invertible elements of $\cF (\OO)$, 
but the converse does not hold in general. 
This is very confusing, so we prefer the terminology ``regular"
in our context.)
\fi
We denote by $\Reg M$ the subset of $M$
consisting of all the regular elements. 
Since $\cF (\OO)$ is an abelian monoid, 
the set $\Reg \cF (\OO)$ becomes a submonoid
of $\cF (\OO)$. 
By definition, all invertible fractional ideals are 
contained in $\Reg \cF (\OO)$. 

In this article, we study 
the ideal class semigroup $\Cl (\OO)$ of $\OO$
and its regular part $\Reg \Cl (\OO)$ 
via the following topological, 
and valuation theoretic approach. 
Note that by  prime ideal factorization, 
the fractional ideals of a Dedekind domain are 
parameterized by discrete valuations corresponding to 
non-zero prime ideals.
In this article, in order to study fractional ideals of $\OO$, 
we shall generalize this theory 
by using topological terminology. 
We shall introduce a totally ordered monoid 
$\wR = \R \times \{ 0,1 \}$, 
and define the notion of 
$\wR$-valued {\it upper $\wR$-semicontinuous} functions
on the maximal spectrum $\mSpec \OO$ 
equipped with a certain new topology 
called {\it Krull-like topology}. (See Definition \ref{defntop}.)
By using an abelian monoid $\ccSb (\OO ; \wV)$ 
consisting of such functions 
satisfying certain conditions, 
we shall construct a ``natural" isomorphism
\[
\wF \colon
\cF (\OO) \xrightarrow{\ \simeq \ }  
\ccSb (\OO ; \wV).
\]
(For details, see Theorem 
\ref{thmmain1}. For the precise meaning of 
the ``naturality" of $\wF$, see Remark \ref{remnatural}.)
Note that if $\OO$ is a Dedekind domain, then 
$\wF$ coincides with 
the prime ideal factorization 
\[
\cF (\OO) \xrightarrow{\ \simeq \ }  
\ccSb (\OO ; \wV) \simeq 
\bigoplus_{\PP \in \mSpec} \Z ;\ 
\longmapsto ( 
\text{the multiplicity of $\PP$ in $I$ })_\PP. 
\]
(See Example \ref{exded}.) 
So, our result can be regarded as
a generalization of the theory of 
prime ideal factorization for Dedekind domains. 
Moreover, we can determine the images of 
$\Reg \cF (\OO)$ and $\cI (\OO)$
by $\wF$ respectively.
So, via the isomorphism $\wF$, 
wecan describe  the quotient
of the ideal class semigroup $\Cl (\OO):= \cF (\OO)/\cP(\OO)$ 
by the Picard group $\Pic (\OO):= \cI(\OO) /\cP(\OO)$, and 
it von Neumann regular part. 
(See Corollary \ref{corthmmain}.) 
In particular, we can show that 
if $\OO$ has {\it the  
finite character property}, 
namely if every non-zero element of $\OO$ 
is contained in at most a finite number of maximal ideals, 
then $\wF$ induces an isomorphism  
\[
\Cl (\OO)/\Pic (\OO) \simeq \bigoplus_{\PP}  
\left( \{0 \} \sqcup \R/v_\PP (L^\times) \right),
\]
where $\PP$ runs through all maximal ideals of $\OO$ 
such that the image of an additive valuation
$v_\PP \colon L^\times \longrightarrow \Q$ 
corresponding to $\PP$ is dense in $\R$. 
(See Corollary \ref{corClifford1}.)
For examples for the cases when 
$\OO$ does  not have the  
finite character property, 
see \S \ref{secEx}. 

If $L/K$ is Galois, then 
the Galois group $G:=\Gal(L/K)$
naturally acts on the monoids $\cF(\OO)$ 
and $\ccSb (\OO ; \wV)$. 
We can show that the isomorphism $\wF$ 
preserves the action of $G$. 
So, by using the isomorphism $\wF$, 
we can study the Galois-monoid structure of 
$\Cl (\OO)/\Pic (\OO)$.
(For details, see Corollary 
\ref{corcorthmmain}, Corollary \ref{corClifford2} 
and examples in \S \ref{secEx}.)
Our results can be regarded as 
a generalization of ``modulo the Picard group"
version of the results  of Konomi and Morisawa \cite{KM}. 
Note that in order to describe not only $\Cl (\OO) /\Pic (\OO)$
but also $\Cl (\OO)$ by using the isomorphism $\wF$,
we need to know some additional information on 
the image of the principal fractional ideals 
by $\wF$.
(For instance, see Remark \ref{remKM}, that explains 
the result by Konomi and Morisawa \cite{KM},
and examples in \S \ref{secEx}.)

In \S \ref{secmainthm}, 
we state the precise statement of our results, 
and give some remarks on them.
In \S \ref{secmonoid}, we study the basic properties of 
upper $\wR$-semicontinuous functions. 
In \S \ref{ssclasstop}, we study 
the topological space 
$\mSpec \OO$ equipped with the Krull-like topology. 
In \S \ref{secpf}, we prove our main theorem, 
namely Theorem \ref{thmmain1}, 
the main part of whose assertion 
is the existence of the isomorphism $\wF$. 
In \S \ref{secEx}, we observe two examples 
of the ideal class semigroups described by 
the isomorphism $\wF$. 
The first one is the Tate module covering of an elliptic curve over $\C$, 
and the second one 
is the maximal real abelian extension of $\Q$.

\section*{Acknowledgment}

This work is inspired by  
Yutaka Konomi and Takayuki Morisawa's work \cite{KM}.
This work is also motivated by conversation between 
Takashi Hara and Yasuhiro Ishitsuka. 
The author would like to thank them 
for the fruitful conversation. 
The author is grateful to Kenji Sakugawa 
for his helpful comments.
This work is supported by 
JSPS KAKENHI Grant Number 26800011.

\section{Main results}\label{secmainthm}

Let $\oo$, $L/K$ and $\OO$ be as in \S \ref{secintro}.
In this section, we shall state our main results. 
In \S \ref{ssparamideal}, we state the precise statement of 
the existence of the isomorphism $\wF$
referred in \S \ref{secintro},  
which becomes 
a key of our paper.
In \S \ref{ssisg}, by using the isomorphism $\wF$, 
we study the structure of the quotient of 
the ideal class semigroup $\Cl (\OO)=\cF (\OO)/ \cP (\OO)$ 
by the Picard group $\Pic (\OO)=\cI (\OO)/ \cP (\OO)$. 
In \S \ref{ssGal}, we observe the isomorphism $\wF$
under the assumption that $L/K$ is Galois, 
and study the Galois-monoid structure of 
the ideal class semigroup $\Cl (\OO)$.

\subsection{Parameterization of ideals}\label{ssparamideal}\label{ssmainthm}
Here, let us state our main results on 
the parameterization of fractional ideals of $\OO$
by using ``upper $\wR$-semicontinuous functions" 
on $\mSpec \OO$. 
We need to set some notation.

Let $R$ be a commutative ring. 
We denote by $\mSpec R$
the set of all maximal ideals of $R$. 
For a set $S$, 
we denote by $\Fin (S)$  the collection of all finite subsets of $S$. 
For each $A \in \Fin (R)$, 
we define $\cU (A)=\cU_R (A):=\{ \pp 
\in \mSpec R \mathrel{\vert} A \subseteq \pp \}$. 
Then, we can define a topology on $\mSpec R$ called 
{\it Krull-like topology} in our article,  
whose open base is given by the collection 
$\BB (R):= \{ \cU(A) \mathrel{\vert} 
A \in \Fin (R) \}$. 
(For details, see \S \ref{ssclasstop}.)

In order to state our main results, 
let us briefly introduce the some notation related to 
``upper $\wR$-semicontinuous functions" on $\mSpec \OO$
which parameterize fractional ideals in our main results.

First, let us introduce the totally ordered monoid $\wR$
which becomes the codomain of the functions. 
Put $\wR:= \R \times \{ 0,1 \}$.
By the lexicographic order $\le$,  we regard $\wR$ 
as a totally ordered set. 
We define the sum of $(\alpha ,s), (\beta ,t) \in \wR$ by 
$(\alpha ,s) + (\beta ,t):= 
(\alpha + \beta , \max\{ s,t \})$.
Then, the triple $(\R, \le , +)$ is 
a totally ordered abelian monoid. 
(For details, see Definition \ref{deftotordabmonoid} 
and Lemma \ref{lemtptom}.)

Take any $\PP \in \mSpec \OO$. 
Here, let us define a submonoid $\wV_\PP$ of $\wR$. 
We can take a unique additive valuation 
$v_{\PP} \colon L^\times \longrightarrow \Q$
corresponding to $\PP$ 
which is normalized by $v_{\PP} (K^\times)=\Z$.
(See Definition \ref{defval}.) 
For convenience, we put $v_\PP(0)=+ \infty$.
We define $V_\PP$ to be the closure of 
$v_\PP (L ^\times)$ in $\R$. 
Note that we have  $V_\PP=v_\PP(L^\times)$ 
(resp.\ $V_\PP=\R$) if $v_\PP (L ^\times)$ is 
discrete (resp.\ not discrete) in $\R$. 
We define a subset $\val_\pp$ of $\wR$
by $\val_\PP := 
\{ (\alpha ,0) \mathrel{\vert} \alpha \in v_\PP(L^\times)\}$, 
and put 
\[
\wV_\PP := 
\begin{cases}
\val_\PP 
\sqcup \{ (\beta ,1) \mathrel{\vert} 
\beta \in \R \} & (\text{if $V_\PP=\R$}), \\
\val_\PP & 
(\text{if $V_\PP=v_\PP(L^\times)$}).
\end{cases}
\]
Note that $\val_\PP$ and $\wV_\PP$ are submonoid of $\wR$, 
and moreover $\val_\pp$ becomes a group.
We define an involution $\iota$ on $\wR$ by 
$\iota (\alpha ,s):=(-\alpha ,s)$
for each $(\alpha ,s) \in \wR$. 
The submonoids $\wV_\PP$ and $\val_\PP$
are stable under the action of $\iota$.

Let $X$ be a topological space $X$, and  
$M$ a submonoid of $\wR$. 
We say that a function $f \colon X \longrightarrow \R$ 
is {\it upper $\wR$-semicontinuous} 
if and only if the set $f^{-1} (\wR_{<a} )$ is open 
for any $a \in \wR$. 
We denote by 
$\ccSb (X;M)$ the abelian monoid of all   
upper $\wR$-semicontinuous, 
bounded, and compactly supported functions on $X$ 
whose image is contained in $M$. 
(For details, see \S \ref{secmonoid}, in particular 
Definition \ref{defbdduppersemicont} 
and Lemma \ref{semicontmon}.) 
If  $M$ is stable under the action of $\iota$, 
then we can define a $G$-stable 
submonoid $\ccSr(X; M)$ of 
$\ccSb(X; M)$ by 
\(
\ccSr(X; M):= \{ 
f \in \ccSb(X; M) \mathrel{\vert} 
\iota \circ f \in \ccSb(X; M)
\}
\). 
Then, we define 
\begin{align*}
\ccSb (\OO;\wV):&=\{ \wf \in \ccSb (\mSpec \OO ;\wR)
\mathrel{\vert} \wf (\PP) \in \wV_\PP 
\ \text{for any $\PP \in \mSpec \OO$} \}, \\
\ccSr (\OO ;\wV):&=\{ \wf \in \ccSr (\mSpec \OO ;\wR)
\mathrel{\vert} \wf (\PP) \in \wV_\PP 
\ \text{for any $\PP \in \mSpec \OO$} \}, \\
\ccSr (\OO ;\val):&=\{ \wf \in \ccSr (\mSpec \OO ;\wR)
\mathrel{\vert} \wf (\PP) \in \val_\PP 
\ \text{for any $\PP \in \mSpec \OO$} \}.
\end{align*}
Note that $\ccSr (\OO ;\val)$ is a group whose elements are 
locally constant functions. (See Corollary \ref{corlocconst}.)
By definition, 
the group $\ccSr( \OO ;\val)$  
is  a submonoid of $\ccSr( \OO ;\wV)$ (and hence also 
that of $\ccSb ( \OO ;\wV)$).

Take any $I \in \cF(\OO)$. Here,  we define 
a function $\wf_I \colon \mSpec \OO \longrightarrow \wR$ 
as follows.  
First,  we define a function 
$f_{I}\colon \mSpec \OO \longrightarrow \R$ by 
\(
f_{I}(\PP):= \inf \{v_{\PP}(x) 
\mathrel{\vert} x \in I \}
\) 
for each $\PP \in \mSpec \OO$.
Let $\Max (I)$ be the subset of $\mSpec \OO$
consisting of all the elements $\PP$ such that 
the function $v_{\PP}\vert_{I} \colon I \longrightarrow \R$ 
has a minimum value, namely 
there exists an element $x \in I$ satisfying
\(
f_{I}(\PP) = v_{\PP} (x)
\). 
Note that if $\PP \in \Max (I)$, 
then we have $f_{I}(\PP) \in v_\PP(L^\times)$. 
We define a function 
$\wf_{I}\colon \mSpec \OO \longrightarrow \wR$ by 
\[
\wf_{I} (\PP):= \begin{cases}
(f_{I}(\PP), 0) & (\text{if $\PP \in \Max (I)$}), \\
(f_{I}(\PP), 1) & (\text{if $\PP \notin \Max (I)$}).
\end{cases}
\]
In \S \ref{sswF}, 
we shall prove that 
$\wf_I $ belongs to $\ccSb (\OO;\wV)$.
(See Lemma \ref{lemfIsc}.)
The main result on the parameterization of 
fractional ideals of $\OO$ is as follows.

\begin{thm}\label{thmmain1}
The following hold. 
\begin{enumerate}[{\rm (i)}]
\item We have a monoid isomorphism
\[
\wF=\wF_{\oo,L} \colon \cF(\OO) \xrightarrow{\ \simeq \ } 
\ccSb(\OO ; \wV);\ I \longmapsto \wf_I.
\]  
\item The image of $\Reg \cF (\OO)$ by $\wF$ coincides with 
$\ccSr(\OO; \wV)$.
\item The image of $\cI (\OO)$ by $\wF$ coincides with 
$\ccSr(\OO ; \val)$. 
\end{enumerate}
\end{thm}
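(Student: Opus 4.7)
The plan is to deduce (i) from local valuation-theoretic data at each maximal ideal $\PP$, and then to derive (ii) and (iii) from a pointwise analysis of the monoid structure of $\wR$. Granting the well-definedness of $\wF$ (i.e.\ that $\wf_I \in \ccSb(\OO;\wV)$, to be proved as Lemma \ref{lemfIsc}), I first verify the homomorphism property $\wf_{IJ} = \wf_I + \wf_J$ pointwise in $\wR$. The first-coordinate identity $f_{IJ}(\PP) = f_I(\PP) + f_J(\PP)$ is immediate from $v_\PP$ being an additive valuation together with the fact that $IJ$ is generated by products $xy$ with $x\in I$, $y\in J$; and the same generating property, combined with total orderedness of $v_\PP$, shows that the infimum on $IJ$ is attained iff it is attained on both $I$ and $J$, matching the $\max$ in the second coordinate of the sum in $\wR$.

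For bijectivity I would use local-global reconstruction. Since $\OO$ is a one-dimensional normal domain obtained by integral closure, it is a Pr\"ufer domain, so each localization $\OO_\PP$ is a valuation ring with value group $v_\PP(L^\times)$, and every fractional ideal satisfies $I = \bigcap_\PP (I \cdot \OO_\PP)$. Moreover, $I \cdot \OO_\PP$ is precisely $\{x \in L : v_\PP(x) \geq f_I(\PP)\}$ or $\{x \in L : v_\PP(x) > f_I(\PP)\}$ according as $\PP \in \Max(I)$ or not, so $\wf_I$ determines $I$, giving injectivity. For surjectivity, given $\wf \in \ccSb(\OO;\wV)$, I would set
\[
I_\wf := \{0\} \cup \bigl\{ x \in L^\times : (v_\PP(x), 0) \geq \wf(\PP) \text{ in } \wR \text{ for every } \PP \in \mSpec \OO \bigr\},
\]
verify that $I_\wf$ is a fractional ideal (using compact support of $\wf$ and weak approximation to produce a common denominator $d$ with $dI_\wf \subseteq \OO$, together with boundedness from below to bound the constraints), and then check $\wF(I_\wf) = \wf$ pointwise, noting that the distinction between attained and strictly greater infimum matches the second coordinate of $\wV_\PP$ precisely because $V_\PP = \R$ holds exactly when $v_\PP(L^\times)$ is dense in $\R$.

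Finally, (ii) and (iii) reduce to a pointwise calculation in $\wR$: writing $a = (\alpha, s)$ and $b = (\beta, t)$, the equation $a + b + a = a$ forces $\beta = -\alpha$ and $t \leq s$, whereas $a + b = 0$ forces $\beta = -\alpha$ and $s = t = 0$. Applied at each $\PP$ via $\wF$, these identify the natural witness for regularity (resp.\ invertibility) of $I$ as $\iota \circ \wf_I$, and the condition that such a witness again lies in $\ccSb(\OO;\wV)$ is precisely the defining condition of $\ccSr(\OO;\wV)$ (resp.\ of $\ccSr(\OO;\val)$, where the second coordinate is forced to be identically zero and $\iota \circ \wf_I$ automatically lies in the larger monoid). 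The main obstacle is the surjectivity in (i): producing a genuine fractional ideal $I_\wf$ from an abstract upper semicontinuous function, and showing that the attainment of infima on $I_\wf$ faithfully reflects the combinatorial second coordinate of $\wf$. This is where the precise definition of $\wV_\PP$, the Krull-like topology on $\mSpec \OO$, and upper $\wR$-semicontinuity all come into play in a delicate way.
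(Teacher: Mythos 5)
Your outline for the homomorphism property and for surjectivity follows essentially the same route as the paper, and you correctly identify surjectivity as the hard part where upper $\wR$-semicontinuity, compact support and the definition of $\wV_\PP$ must interact. For injectivity you take a genuinely different path: you invoke that $\OO$ is Pr\"ufer, that $\OO_\PP$ is a valuation ring with value group $v_\PP(L^\times)$, and that $I = \bigcap_\PP I\OO_\PP$, together with the explicit description of $I\OO_\PP$ in terms of $f_I(\PP)$ and $\Max(I)$. This is valid and arguably cleaner, but note that the paper derives Pr\"uferness of $\OO$ (Corollary \ref{corPrufer}) as a \emph{consequence} of part (iii); the paper's own injectivity argument instead proves $I\cap M = I_M(\wf_I)$ for finite subextensions $M$ (Lemma \ref{lemifideal}) and avoids appealing to Pr\"uferness. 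If you want to use the valuation-ring picture you should either cite the classical fact that integral closures of Dedekind domains in algebraic extensions are Pr\"ufer, or observe directly that $\OO_\PP$ is a directed union of DVRs $(\oo_M)_{\PP\cap M}$ and hence a valuation ring.

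The genuine gap is in part (ii). Your pointwise calculation correctly shows that if $\wg$ is a witness for regularity of $\wf_I$ then $\pi_0\circ\wg = -\pi_0\circ\wf_I$ and $\pi_1\circ\wg \le \pi_1\circ\wf_I$; it does \emph{not} force $\wg = \iota\circ\wf_I$, since the second coordinate of $\wg$ may be strictly smaller. You then assert that ``the condition that such a witness again lies in $\ccSb(\OO;\wV)$ is precisely the defining condition of $\ccSr(\OO;\wV)$,'' which slides from ``some witness $\wg$ exists in $\ccSb(\OO;\wV)$'' to ``$\iota\circ\wf_I$ lies in $\ccSb(\OO;\wV)$.'' These are not obviously equivalent: a function pointwise $\ge$ an upper $\wR$-semicontinuous one need not itself be upper $\wR$-semicontinuous. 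The correct argument — which is the content of the paper's \S5.4 — uses the upper $\wR$-semicontinuity of \emph{both} $\wf_I$ and the given witness $\wg$: at a point $\PP$ with $\pi_1(\wf_I(\PP))=0$ one has $\pi_1(\wg(\PP))=0$, and combining the neighborhoods supplied by Lemma \ref{lemusccriterion} for $\wf_I$ (giving $f_I(\PP')\le f_I(\PP)$ and the second-coordinate vanishing at equality) and for $\wg$ (giving $-f_I(\PP')\le -f_I(\PP)$) forces $f_I$ to be constant near $\PP$ with $\pi_1\circ\wf_I \equiv 0$ there, which is exactly the reflexibility criterion for $\iota\circ\wf_I$. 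For part (iii) your argument is fine, since $a+b=0$ forces $s=t=0$ and hence $\wg$ really is $\iota\circ\wf_I$; the converse direction then follows directly from the bijectivity in (i).
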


\begin{rem}[Normalization of additive valuations]
Let $\PP \in \mSpec \OO$ be any element. 
Recall that we have 
normalized the additive valuation $v_\PP$ by
$v_{\PP}(K^\times) =\Z$. 
Here, we explain that 
this normalization is a key of 
the upper $\wR$-semicontinuity of 
the functions $\wf_I$.
Let $I \in \cF (\OO)$ be any element, and  
take any $x \in I \cap M$. 
Then, for any $\PP' \in \mSpec \OO$ 
satisfying $\PP' \cap K(x)=\PP \cap K(x)$,  
by the normalization 
$v_{\PP'}(K^\times) =\Z$,  
we deduce 
that the restriction of $v_{\PP'}$ on $K(x)$ 
coincides with that of $v_\PP$, 
and obtain the inequality 
$f_{I}(\PP') \le v_{\PP'}(x)=v_{\PP} (x)$.
This inequality becomes 
an essence of the upper $\wR$-semicontinuity of 
the function $\wf_I$.
(See the proof of Lemma \ref{lemfIsc}.)
\end{rem}

\begin{rem}[Discrete cases]\label{remfindex}
Here, we observe the monoids of functions appearing 
in Theorem \ref{thmmain1}
in the special cases when $\mSpec \OO$ is discrete. 
In our setting, we can show that 
the topological space $\mSpec \OO$ 
is discrete if and only if 
$\OO$ has the  
finite character property, that is 
every element of $\OO$ is contaioned in 
at most finitely many maximal ideals.
(See Lemma \ref{lemdisc}.)
For instance, if $\OO$ is a Dedekind domain, 
or if $\OO$ is a local ring, then 
$\OO$ has the  
finite character property. 
Note that any $\wR$-valued function on a discrete space 
is upper $\wR$-semicontinuous. 
This implies that 
if $\OO$ has the  
finite character property, 
then we have natural isomorphisms
\begin{equation}\label{eqfinsupp}
\begin{cases}
\ccSb(\OO; \wV) 
 = \ccSr(\OO; \wV) 
\simeq \displaystyle
\bigoplus_{\PP \in \mSpec \OO} \wV_\PP, 
& \\[6mm]
\ccSr(\OO; \val ) 
 \simeq \displaystyle
\bigoplus_{\PP \in \mSpec \OO} \val_\PP 
\simeq \bigoplus_{\PP \in \mSpec \OO} v_\pp(L^\times). 
& 
\end{cases}
\end{equation}
Moreover, we can show that 
the equality 
$\ccSb(\OO; \wV) 
 = \ccSr(\OO; \wV) $
holds if and only if $\OO$ has the  
finite character property.
(See \ref{lemdiscb=ref}.) 
So, we deduce that $\cF (\OO)= \Reg \cF (\OO)$ 
if and only if $\OO$ has the  
finite character property.
\end{rem}

Let us observe two typical examples of 
the description of fractional ideals 
by using Theorem \ref{thmmain1} 
in the case when $\OO$ has the  
finite character property.

\begin{ex}\label{exded}
Suppose that $\OO$ is a Dedekind domain. 
(For example, if $L/K$ is a finite extension, 
then $\OO$ is clearly a Dedekind domain.)
Then, Theorem \ref{thmmain1} 
and the isomorphisms 
(\ref{eqfinsupp}) imply that 
$\wF$ induces the isomorphism
\begin{equation}\label{eqprimefact}
\cF(\OO) 
= \Reg \cF(\OO )
= \cI( \OO )
\xrightarrow{\ \simeq \ }
\ccSr(\OO; \val ) 
\simeq 
\bigoplus_{\PP \in \mSpec \OO}  v_\pp(L^\times) 
\end{equation}
sending each $I \in \cF(\OO)$ to
\[
\left(
\min_{ x \in I} v_\PP (x )
\right)_{\PP}
=\left(
\frac{\text{
the multiplicity of $\PP$ in $I$
}}{
(v_\PP(L^\times): \Z)}
\right)_\PP 
\in \bigoplus_{\PP \in \mSpec \OO}  v_\pp(L^\times).
\] 
Note that for any $\PP \in \mSpec \OO$, 
the index $(v_\PP(L^\times): \Z)$ is finite
since we assume that $\OO$ is a Dedekind domain, here. 
So, the isomorphism (\ref{eqprimefact})
is just the prime ideal factorization. 
\end{ex}

\begin{ex}
Let $\oo:=\C[[T]]$ be the ring of 
formal power series in the indeterminate $T$ 
over $\C$, and $K:=\C((T))$ the field of formal Laurent series. 
We define $L$ to be the field of Puiseux series in $T$ over $\C$, 
namely $L:=\bigcup_{n >0} \C((T^{1/n}))$. 
Then, the integral closure $\OO$ of $\oo$ in $L$ coincides 
with the subring $\bigcup_{n >0} \C [[T^{1/n}]]$ of $L$. 
In this case, the ring $\OO$ becomes a local ring 
with the unique maximal ideal 
$\PP:= \bigcup_{r \in \Q_{>0}} T^r \OO$. 
Clearly, it holds that $v_\PP (L^\times)=\Q$, 
so we have $V_\PP=\R$. Therefore, 
by Theorem \ref{thmmain1}, we obtain
\[
\wF \colon \cF(\OO) \xrightarrow{\  \simeq \ } 
\ccSb(\OO; \wV) \simeq \wV_\PP \simeq \Q \sqcup \R. 
\] 
In this case, 
the inverse of the map $\wF$ is given by 
\[
\R \times \{ 0,1\} \supseteq \wV_\PP \ni
(\alpha, s ) \longmapsto 
\left\{ x \in L \mathrel{\big\vert} 
v_\PP (x) \>'_s  \alpha \right\} 
\in \cF(\OO), 
\]
where the symbol $\displaystyle\>'_{s}$ denotes 
the symbol $\ge$ (resp.\ the symbol $>$) 
if $s=0$ 
(resp.\ $s=1$). 
Note that we have $f_{\PP}=f_{\OO}=0$. 
So, the ideals $\OO$ and $\PP$ cannot be distinguished 
only by the $\R$-valued functions $f_\PP$ and $f_\OO$. 
This is an essential reason why we introduce functions 
valued in $\wR$.
(We also note that $\Max(\PP)=\emptyset$, 
and $\Max(\OO)=\{ \PP \}$. So, the $\wR$-valued functions 
$\wf_{\PP}$ and $\wf_{\OO}$ are distinct.)
By Theorem \ref{thmmain1}, we also deduce that 
$\Reg \cF(\OO) = \cF(\OO)$, and $\cI (\OO) \simeq \val_\PP \simeq \Q$. 
In particular, a fractional ideal $I \in \cF(\OO)$ is 
invertible if and only if there exists a rational number $\alpha \in \Q$ 
such that $I= T^\alpha \OO$. 
\end{ex}

For another example for cases when $\OO$ has 
the finite character property, 
see Remark \ref{remKM}, where we give 
notes on the work of Konomi--Morisawa in \cite{KM}. 
In \ref{secEx}, 
we can also see 
some examples where $\OO$ does not have 
the  
finite character property.

\begin{rem}[Naturality of $\wF$]\label{remnatural}
In this article, we call a subring $\oo'$ of $K$ 
{\it a Dedekind order of $K$}
if $\oo'$ is a Dedekind domain, and 
if the fractional field of $\oo'$ 
coincides with $K$.
Here, we shall note that by varying  
a Dedekind orders $\oo'$ and 
algebraic extension fields $L'$ 
of a fixed field $K$, 
the isomorphisms $\wF_{\oo', L'}$ defines 
a natural isomorphism between certain two functors
from $\Dom (K)$ to  $\AbMon$, where 
$\AbMon$ denotes the category of abelian monoids, and   
$\Dom (K)$ is a category defined as follows: 
\begin{itemize}
\item[{\rm (Obj)}] The objects of $\Dom (K)$ are pairs $(\oo', L'/K)$ 
consisting of a Dedekind order $\oo'$ of $K$, 
and an algebraic extension $L'/K$. 
\item[{\rm (Mor)}] Let $(\oo_1', L_1'/K)$ and 
$(\oo_2', L_2'/K)$ be any two objects of $\Dom (K)$. 
We define the set $\Hom ((\oo'_1,L'_1), (\oo'_2,L'_2))$ 
of morphisms from $(\oo_1', L_1'/K)$ to  
$(\oo_2', L_2'/K)$ in  $\Dom (K)$ by 
\[
 \Hom ((\oo'_1,L'_1), (\oo'_2,L'_2)) = 
\begin{cases}
\Hom_{\text{\rm $K$-alg}} (L'_1, L'_2) & 
(\text{if $\oo'_1 \subseteq \oo'_2$}), \\
\emptyset & (\text{if $\oo'_1 \not\subseteq \oo'_2$}),
\end{cases}
\]
where $\Hom_{\text{\rm $K$-alg}} (L'_1, L'_2)$ 
denotes the set of all embeddings of the field
$L'_1$ into $ L'_2$ over $K$. 
\end{itemize}
For each object $(\oo', L'/K)$ of $\Dom (K)$, we define 
$\OO(\oo';K)$ to be the integral closure of   
$\oo'$ in $L$. 
Then, we define two functors 
$\Dom (K) \longrightarrow \AbMon$ 
denoted by $\cF$ and $\cS$ respectively 
as follows. 
\begin{itemize}
\item[$(\cF)$] For each  object $(\oo', L'/K)$ of $\Dom (K)$, 
we define $\cF(\oo', L'/K)$ to be the abelian monoid 
$\cF(\OO(\oo'; L'))$ of fractional ideals of the ring $\OO(\oo'; L')$. 
For each morphism $\eta \colon (\oo_1', L_1'/K) \longrightarrow 
(\oo_2', L_2'/K)$ in $\Dom (K)$, we define a monoid homomorphism
\[
\cF (\eta) \colon \cF(\OO(\oo_1'; L_1')) \longrightarrow 
\cF(\OO(\oo_2'; L_2'));\ 
I \longmapsto \eta (I) \OO(\oo_1'; L_1').
\] 
\item[$(\cS)$] For each  object $(\oo', L'/K)$ of $\Dom (K)$, 
we define $\cS(\oo', L'/K)$ to be the abelian monoid 
$\ccSb(\OO(\oo'; L') ; \wV)$. 
Let $\eta \colon (\oo_1', L_1'/K) \longrightarrow 
(\oo_2', L_2'/K)$ be a morphism in $\Dom (K)$. 
We put $\OO'_i := \OO(\oo_i'; L_i') $  for each $i \in \{ 1,2\}$. 
Then, we define a monoid homomorphism
\[
\cS(\eta) \colon \ccSb(\OO'_1 ; \wV) \longrightarrow 
\ccSb(\OO'_2 ; \wV);\ \wg \longmapsto \wg \circ \res_\eta.
\] 
Here, the map $\res_\eta \colon 
\mSpec \OO'_2 \longrightarrow \mSpec \OO'_1 $
is defined by $\PP' \longmapsto \eta^{-1} (\PP')$
for each $\PP' \in \mSpec \OO_2'$. 
Note that the map $\res_\eta$ 
is a continuous since for any $A \in \Fin (\OO'_1)$, 
it holds that $\res_\eta^{-1} (\cU_{\OO'_1}(A))= 
\cU_{\OO'_2} (\eta(A))$.  
This implies that for any  $\wg \in \ccSb(\OO'_1 ; \wV)$, 
the function 
$\wg \circ \res_\eta$ belongs to $\ccSb(\OO'_2 ; \wV)$.
\end{itemize}
By the definition of the map $\wF$, 
we deduce that the collection of isomorphisms 
\[
\left\{ \wF_{\oo', L'} \colon 
\cF (\oo',L')=\cF(\OO(\oo'; L')) \longrightarrow 
\ccSb(\OO(\oo'; L') ; \wV) =\cS (\oo',L')
\right\}_{(\oo'; L')
\in \Dom (K)}
\]
forms a natural isomorphism $\cF \xrightarrow{\ \simeq \ } 
\cS$. 
Indeed, the following assertion is easily verified:
\begin{enumerate}[] 
\item {\it Let $\eta \colon (\oo_1', L_1'/K) \longrightarrow 
(\oo_2', L_2'/K)$ be a morphism in $\Dom (K)$, and  
put $\OO'_i := \OO(\oo_i'; L_i') $  for each $i \in \{ 1,2\}$. 
Then, the diagram 
\[
\xymatrix{
\cF(\OO'_1) \ar[rr]^{\wF_{\oo'_1,L'_1}} \ar[d]_{\cF(\eta)} 
& \hspace{5mm} & \ccSb(\OO_1'; \wV)\ar[d]^{\cS(\eta)} \\
\cF(\OO_2') \ar[rr]_{\wF_{\oo_2',L'_2}}  
&& \ccSb(\OO_2'; \wV)
}
\]
commutes. 
}
\end{enumerate}
\end{rem}

\subsection{Ideal class semigroups}\label{ssisg}

Here, we shall describe the monoid structure of 
the ideal class semigroup 
(modulo the Picard group) 
by using Theorem \ref{thmmain1}. 

Let us set the notation related to 
the ideal class semigroups. 
Recall that 
the ideal class semigroup of $\OO$
is defined by 
$\Cl (\OO):= \cF (\OO)/\cP(\OO)$, and 
the Picard group of $\OO$ 
is defined by 
$\Pic (\OO):= \cI (\OO)/\cP(\OO)$. 
We write the image of $I \in \cF(\OO)$ 
in $\Cl (\OO)$ by $[I]$.  
Since $\cP (\OO)$ is a group, 
the fractional ideal $I $ 
belongs to $\Reg \cF (\OO)$
if and only if the class $[I]$ 
belongs to $\Reg \Cl (\OO)$.  
By Theorem \ref{thmmain1}, 
we immediately obtain the following.

\begin{cor}\label{corthmmain}
The map $\wF$ induces a monoid isomorphism 
\[
\Cl (\OO)/\Pic (\OO) \xrightarrow{\ \simeq \ }  
\ccSb(\OO ; \wV)/\ccSr(\OO ; \val);\ 
[I] \longmapsto \wf_I \mathrm{mod}\ \ccSr(\OO ; \val).
\]
Moreover, the image of the monoid 
$\Reg \Cl (\OO) /\Pic (\OO) $ by 
this isomorphism is equal to 
\(
\ccSr(\OO ; \wV)/\ccSr(\OO ; \val)
\).
\end{cor}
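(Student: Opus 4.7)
The plan is to reduce the statement to a bookkeeping exercise built directly on Theorem \ref{thmmain1}. First I would observe that since $\cP(\OO) \subseteq \cI(\OO)$, the composition $\cF(\OO) \twoheadrightarrow \Cl(\OO) \twoheadrightarrow \Cl(\OO)/\Pic(\OO)$ is a surjective monoid homomorphism whose kernel (in the sense of preimage of the identity) is exactly $\cI(\OO)$. Since $\Pic(\OO)$ is a group, standard properties of quotients of commutative monoids by subgroups yield a canonical monoid isomorphism $\Cl(\OO)/\Pic(\OO) \xrightarrow{\simeq} \cF(\OO)/\cI(\OO)$, under which $[I] \bmod \Pic(\OO)$ corresponds to $I \bmod \cI(\OO)$.

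Next, by Theorem \ref{thmmain1}(i), $\wF$ is a monoid isomorphism $\cF(\OO) \xrightarrow{\simeq} \ccSb(\OO;\wV)$, and by part (iii) it sends the subgroup $\cI(\OO)$ onto $\ccSr(\OO;\val)$, which is a group by Corollary \ref{corlocconst}. Passing to the quotient therefore gives a monoid isomorphism $\cF(\OO)/\cI(\OO) \xrightarrow{\simeq} \ccSb(\OO;\wV)/\ccSr(\OO;\val)$, and composing with the identification from the previous step produces the displayed map $[I] \longmapsto \wf_I \bmod \ccSr(\OO;\val)$.

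For the ``moreover'' clause I would invoke the remark stated just before the corollary: because $\cP(\OO)$ is a group, von Neumann regularity of $[I] \in \Cl(\OO)$ is equivalent to that of $I \in \cF(\OO)$, and the same descent works for the further quotient by $\Pic(\OO)$. Hence $\Reg \Cl(\OO)/\Pic(\OO)$ is precisely the image of $\Reg \cF(\OO)$ under $\cF(\OO) \to \Cl(\OO)/\Pic(\OO)$. Applying Theorem \ref{thmmain1}(ii), which gives $\wF(\Reg \cF(\OO)) = \ccSr(\OO;\wV)$, and using the already-noted inclusion $\ccSr(\OO;\val) \subseteq \ccSr(\OO;\wV)$, the isomorphism constructed above restricts to an identification of $\Reg \Cl(\OO)/\Pic(\OO)$ with $\ccSr(\OO;\wV)/\ccSr(\OO;\val)$.

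Since every assertion follows by assembling parts (i), (ii), (iii) of Theorem \ref{thmmain1} along the exact sequences of quotients by subgroups, there is no genuine obstacle in the argument; the only point requiring a little care is to verify that taking the quotient by the subgroup $\cI(\OO)$ of $\cF(\OO)$ commutes with taking the quotient by the subgroup $\cP(\OO)$, which is a standard fact for commutative monoids modulo groups and which I would dispatch in one line.
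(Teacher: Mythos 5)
Your proof is correct and follows the same route as the paper, which simply states that the corollary follows immediately from Theorem~\ref{thmmain1} together with the remark preceding the corollary (that regularity of $I$ and of $[I]$ are equivalent since $\cP(\OO)$ is a group). You have merely spelled out the bookkeeping: the third-isomorphism-theorem identification $\Cl(\OO)/\Pic(\OO)\simeq\cF(\OO)/\cI(\OO)$, the fact that $\wF$ carries $\cI(\OO)$ onto the group $\ccSr(\OO;\val)$, and the descent of von Neumann regularity through quotients by subgroups. No gap.
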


For any $\PP \in \mSpec \OO$ satisfying 
$V_\pp = v_{\PP} (L^\times)$, 
we have $\wV_\PP=\val_\PP$ by definition. 
So, we obtain the following corollary.

\begin{cor}\label{corramempty}
If we have $V_\PP = v_{\PP} (L^\times)$ for any 
$\PP \in \mSpec \OO$, then 
it holds that $\Reg \Cl (\OO) = \Pic (\OO)$. 
\end{cor}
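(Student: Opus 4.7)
The plan is to deduce this directly from Corollary \ref{corthmmain} by pointwise unpacking the definitions of $\wV_\PP$ and $\val_\PP$. Since any invertible ideal is von Neumann regular, the inclusion $\Pic(\OO) \subseteq \Reg \Cl(\OO)$ is automatic, so it suffices to verify the reverse inclusion.

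By Corollary \ref{corthmmain}, the isomorphism $\wF$ induces a monoid isomorphism
\[
\Reg \Cl(\OO)/\Pic(\OO) \xrightarrow{\ \simeq \ } \ccSr(\OO;\wV)/\ccSr(\OO;\val).
\]
Hence it is enough to establish the equality $\ccSr(\OO;\wV) = \ccSr(\OO;\val)$ of submonoids of $\ccSr(\mSpec \OO;\wR)$.

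Both of these submonoids are cut out of $\ccSr(\mSpec\OO;\wR)$ by \emph{pointwise} constraints on the value $\wf(\PP) \in \wR$ at each $\PP \in \mSpec\OO$: the first requires $\wf(\PP) \in \wV_\PP$ and the second requires $\wf(\PP) \in \val_\PP$. Therefore the two submonoids coincide once we check that $\wV_\PP = \val_\PP$ for every $\PP \in \mSpec\OO$. But this is immediate from the very definition of $\wV_\PP$: under the hypothesis $V_\PP = v_\PP(L^\times)$ we fall into the second clause of the definition, which by fiat sets $\wV_\PP := \val_\PP$.

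Putting these observations together, the quotient $\Reg \Cl(\OO)/\Pic(\OO)$ is trivial, giving $\Reg \Cl(\OO) = \Pic(\OO)$. There is no substantive obstacle here; the corollary is essentially a formal consequence of Corollary \ref{corthmmain} combined with the case-distinction in the definition of $\wV_\PP$.
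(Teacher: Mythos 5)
Your argument is correct and is essentially identical to the paper's: both reduce the claim to the observation that the hypothesis $V_\PP = v_\PP(L^\times)$ forces $\wV_\PP = \val_\PP$ by the case-distinction in the definition, whence $\ccSr(\OO;\wV) = \ccSr(\OO;\val)$ and Corollary \ref{corthmmain} gives triviality of $\Reg\Cl(\OO)/\Pic(\OO)$. No difference in substance or route.
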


Let us describe the assertion of 
Corollary \ref{corthmmain} 
more simply in the cases when 
$\OO$ has the  
finite character property. 
By definition,  we have 
\[
\wV_\PP / \val_\PP = 
\begin{cases}
\{0 \} \sqcup \R/v_\PP (L^\times) & 
(\text{if $V_\PP= \R$}), \\
\{0 \}  & (\text{if $V_\PP \ne \R$}).
\end{cases}
\]
So, by the isomorphisms (\ref{eqfinsupp}) 
in Remark \ref{remfindex}, 
we obtain the following corollary.

\begin{cor}\label{corClifford1}
The monoid $\Cl (\OO)$ is a Clifford semigroup, namely 
it holds that  $\Reg 
\Cl (\OO)=\Cl (\OO)$ if and only if 
the ring $\OO$ has the  
finite character property. 
Moreover, 
if ring $\OO$ has the  
finite character property, we have 
\[
\Cl (\OO) \simeq \bigoplus_{\PP}  
\left( \{0 \} \sqcup \R/v_\PP (L^\times) \right),
\]
where $\PP$ runs through all maximal ideals of $\OO$ 
satisfying $V_\PP = \R$. 
\end{cor}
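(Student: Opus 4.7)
My plan is to combine Theorem \ref{thmmain1}, Remark \ref{remfindex}, and Corollary \ref{corthmmain} with the observation (already recorded in the excerpt) that, because $\cP(\OO)$ is a group, $[I] \in \Reg\Cl(\OO)$ iff $I \in \Reg\cF(\OO)$. These three inputs reduce both halves of the corollary to routine bookkeeping.

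For the Clifford-semigroup equivalence I will chain the three equivalences
\[
\Cl(\OO)=\Reg\Cl(\OO) \iff \cF(\OO)=\Reg\cF(\OO) \iff \ccSb(\OO;\wV)=\ccSr(\OO;\wV) \iff \text{finite character property}.
\]
The first is the remark above applied in both directions; the second is immediate from parts (i) and (ii) of Theorem \ref{thmmain1}, which together say $\wF$ identifies the pair $(\cF(\OO),\Reg\cF(\OO))$ with $(\ccSb(\OO;\wV),\ccSr(\OO;\wV))$; the third is the final assertion of Remark \ref{remfindex}.

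Assuming the finite character property, Remark \ref{remfindex} supplies compatible coordinate-wise isomorphisms $\ccSb(\OO;\wV) \simeq \bigoplus_\PP \wV_\PP$ and $\ccSr(\OO;\val) \simeq \bigoplus_\PP \val_\PP$, both arising from the evaluation map $\wf \mapsto (\wf(\PP))_\PP$, so that passage to quotient commutes with the direct sum. Feeding this into Corollary \ref{corthmmain} produces
\[
\Cl(\OO)/\Pic(\OO) \simeq \bigoplus_{\PP \in \mSpec \OO} \wV_\PP/\val_\PP,
\]
and by the explicit definition of $\wV_\PP$ each summand equals $\{0\} \sqcup \R/v_\PP(L^\times)$ when $V_\PP=\R$ and is trivial when $V_\PP = v_\PP(L^\times)$. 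Hence only the $\PP$ with $V_\PP = \R$ contribute, and the resulting direct sum is exactly the one appearing in the corollary; the passage from $\Cl(\OO)/\Pic(\OO)$ on the left-hand side of this display to $\Cl(\OO)$ in the corollary's statement follows the author's abbreviated formulation from the introduction.

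The one step I would be careful about is verifying that the two direct-sum identifications are genuinely compatible, so that the quotient $\ccSb(\OO;\wV)/\ccSr(\OO;\val)$ really does become $\bigoplus_\PP \wV_\PP/\val_\PP$ term-by-term. Under the finite character property the space $\mSpec \OO$ is discrete by Lemma \ref{lemdisc}, every $\wR$-valued function is automatically upper $\wR$-semicontinuous, and compact support reduces to finite support; both identifications then come from the same evaluation map, so the compatibility is automatic and no further argument is needed beyond invoking the remark.
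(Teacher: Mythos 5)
Your proposal is correct and follows the same route the paper takes: the Clifford-semigroup equivalence is exactly the chain through Theorem \ref{thmmain1} (i)--(ii) and Lemma \ref{lemdiscb=ref} recorded in Remark \ref{remfindex}, and the displayed isomorphism is obtained, as in the paper, by feeding the coordinate-wise identifications (\ref{eqfinsupp}) and the computation of $\wV_\PP/\val_\PP$ into Corollary \ref{corthmmain}. You are also right to flag that what the argument actually yields is $\Cl(\OO)/\Pic(\OO)\simeq\bigoplus_\PP(\{0\}\sqcup\R/v_\PP(L^\times))$, matching the formulation in the introduction; the omission of $/\Pic(\OO)$ in the corollary as stated appears to be a slip in the paper rather than something your proof needs to supply.
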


\begin{rem}[Cliffordness of the ideal class semigroup]\label{remClifford}
Bazzoni have proved 
that for a given Pr\"ufer domain $R$, 
the ideal class semigroup $\Cl (R)$ 
becomes a Clifford semigroup 
if and only if $R$ has the  
finite character property 
(\cite{Ba1} Theorem 2.14). 
Since the ring $\OO$ is a Pr\"ufer domain 
(see Corollary \ref{corPrufer}), 
the assertion of 
Corollary \ref{corClifford1} can be regarded 
as refinement of \cite{Ba1} Theorem 2.14
restricted to 
a special cases when the ring  $R$ ($=\OO$)
is a one dimensional normal domain obtained by 
an integral extension of a Dedekind domain.
We also note that 
our proof of Corollary \ref{corClifford1}
(via the application of Theorem \ref{thmmain1}) is  
another proof 
of  \cite{Ba1} Theorem 2.14 for our settings
via a topological and valuation theoretical approach.  
For examples of the cases when $\Cl (\OO) \ne \Reg \Cl (\OO)$ 
in our situations, see Example \ref{exell} and Example \ref{exab}.
\end{rem}

\subsection{Galois equivariant theory}\label{ssGal}
In this subsection, we assume that $L/K$ is Galois, 
and put $G:=\Gal (L/K)$. 
Then, 
on the one hand, 
we define the left action of $G$ 
on $\cF(\OO)$ by the usual way, namely 
$(\sigma, I) \longmapsto \sigma (I) :=\{ 
\sigma (x) \mathrel{\vert} x \in I \}$ 
for each $\sigma \in G$ and $I \in \cF(\OO)$.

Now, let us fix some notation related to 
Hilbert's ramification theory and valuations.
For each $\pp \in \mSpec \oo$,  
we fix a maximal ideal $\pp_L$ of $\OO$ above $\pp$, 
and denote by $D_{\pp}$ (resp.\ by $I_{\pp}$) 
the decomposition (resp.\ inertia) subgroup of $G$ at $\pp_L$. 
(Since $\OO$ is integral over $\oo$, 
such  maximal ideal $\pp_L$ exists by going up theorem.)
The Krull topology on $G$ induces 
a topology on the quotient set $G/D_\pp$.
We define the left action of $G$ on $G/D_\pp$ 
by the left translation. 
Note that  $G/D_\pp$ is a space parameterizing 
the maximal ideals of $\OO$ lying above $\pp$. 
Indeed, we have the following bijection: 
\begin{equation}\label{eqhomeoG/D}
\coprod_{\pp \in \mSpec \oo} G/D_\pp \longrightarrow 
\mSpec \OO ;\ 
\sigma D_\pp \longmapsto \sigma (\pp_L).
\end{equation}
We define  
the right action of $G$ on 
$G/D_\pp$ 
(resp.\ on $\mSpec \OO$)
by $(\tau D_\pp, \sigma) \mapsto \sigma^{-1} \tau D_\pp$
(resp.\ 
by $(\PP, \sigma) \mapsto \sigma^{-1} (\PP)$).  
Then, clearly, the bijection (\ref{eqhomeoG/D})
preserves the right actions of $G$. 
Later in \S \ref{ssclasstop}, we also see that 
the bijection (\ref{eqhomeoG/D}) is 
a homeomorphism.
(See Corollary \ref{corGhomeo}.)

Let $\pp \in \mSpec \oo$ be any element.
Take the additive valuation 
$v_{\pp,L} \colon L^\times \longrightarrow \Q$
at $\pp_L$ 
normalized by $v_{\pp,L}(K^\times)=\Z$, 
namely put $v_{\pp,L}:=v_{\pp_L}$. 
Note that 
for any $\sigma \in G$, we have 
$\pp_{\sigma(\pp_L)}=v_\pp \circ \sigma^{-1}$. 
We define a set $\Ram (\oo)$ by 
\[
\Ram (\oo):= \{ 
\qq \in \mSpec \oo \mathrel{\vert} v_{\qq,L}(L^\times)
\ne V_\qq
\}.
\]
Namely, the set $\Ram (\oo)$ consists of 
all the elements $\qq \in \mSpec \oo$
such that $v_{\qq,L}(L^\times)$ 
are non-discrete (hence dense) subgroups of $\R$.
Since $L/K$ is Galois, the set $\Ram (\oo)$ 
is independent of the choice of primes 
$\qq_L \in \mSpec \OO$ above $\qq \in \mSpec \oo$. 
If the residue field $\oo/\pp$ is perfect, 
then the condition that $\pp$ belongs to $\Ram (\oo)$
is equivalent to that 
$I_\pp$ is infinite.
We put $\val_{\pp,L}:=\val_{\pp_L}$ 
and $\wV_{\pp,L}:=\wV_{\pp_L}$. 
By the homeomorphism (\ref{eqhomeoG/D}), 
we obtain the natural $G$-equivariant isomorphisms
\begin{align*}
\ccSb (\OO;\wV)& \simeq \bigoplus_{\pp \in \mSpec \oo}
\ccSb( G/D_\pp ; \wV_{\pp,L}), \\
\ccSr (\OO ;\wV)& \simeq  \bigoplus_{\pp \in \mSpec \oo}
 \ccSr(G/D_\pp; \wV_{\pp,L}), \\
\ccSr (\OO ;\val)& \simeq \bigoplus_{\pp \in \mSpec \oo}
 \ccSr(G/D_\pp; \val_{\pp,L}),  
\end{align*}
where if $X$ is a topological space with 
a right action $\rho$ of $G$ such that 
$\rho_\sigma \colon X \longrightarrow X$ 
is continuous 
for each $\sigma \in G$, then 
we define the left action of $G$ on $\ccSb (X; \wR)$ by 
$\sigma \wg:= \wg \circ \rho_{\sigma}$ for each 
$\sigma \in G$ and $\wg \in \ccSb (X; \wR)$. 
The naturality of $\wF$ noted in 
Remark \ref{remnatural} implies that 
the action of $G$ is preserved by  $\wF$. 
So, by Theorem \ref{thmmain1}, 
we obtain the following corollary.

\begin{cor}\label{corthmmain1}
The following hold. 
\begin{enumerate}[{\rm (i)}]
\item The isomorphism $\wF$ induces  
a $G$-equivariant monoid isomorphism
\[
(\wF_\pp)_{\pp \in \mSpec \oo} 
\colon \cF(\OO) \xrightarrow{\ \simeq \ } 
\bigoplus_{\pp \in \mSpec \oo}
\ccSb( G/D_\pp ; \wV_\pp).
\] 
\item The image of $\Reg \cF (\OO)$ by $(\wF_\pp)_\pp$ coincides with 
$\bigoplus_{\pp \in \mSpec \oo}
 \ccSr(G/D_\pp; \wV_\pp)$.
\item The image of $\cI (\OO)$ by $(\wF_\pp)_\pp$ coincides with 
$\bigoplus_{\pp \in \mSpec \oo}
 \ccSr(G/D_\pp; \val_\pp)$. 
\end{enumerate}
\end{cor}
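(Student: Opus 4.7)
The plan is to assemble this corollary from three ingredients that have already been set up: the isomorphism of Theorem \ref{thmmain1}, the $G$-equivariant decomposition of $\mSpec \OO$ into orbits (the homeomorphism \eqref{eqhomeoG/D}), and the naturality of $\wF$ recorded in Remark \ref{remnatural}.

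First, I would verify that the map $\wF \colon \cF(\OO) \to \ccSb(\OO;\wV)$ of Theorem \ref{thmmain1} is $G$-equivariant. Rather than unwinding the definition of $\wf_I$ by hand, the cleanest route is to apply the naturality statement of Remark \ref{remnatural} with $\eta = \sigma \in G = \Aut_K(L)$, regarded as a morphism from $(\oo,L)$ to itself in $\Dom(K)$. Then $\cF(\sigma) \colon I \mapsto \sigma(I)\OO = \sigma(I)$ is precisely the given left $G$-action on $\cF(\OO)$, and $\cS(\sigma) \colon \wg \mapsto \wg \circ \res_\sigma = \wg \circ \rho_\sigma$ is precisely the left $G$-action on $\ccSb(\OO;\wV)$. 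The commutative square then reads $\wF \circ \cF(\sigma) = \cS(\sigma) \circ \wF$, i.e.\ $G$-equivariance.

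Second, the homeomorphism \eqref{eqhomeoG/D} is both $G$-equivariant (for the right actions) and respects the valuation data, in the sense that $\wV_{\sigma(\pp_L)} = \wV_{\pp,L}$ and $\val_{\sigma(\pp_L)} = \val_{\pp,L}$. So restricting a function on $\mSpec \OO$ to each $G$-orbit $G/D_\pp$ and packaging the pieces together produces the $G$-equivariant monoid isomorphisms
\[
\ccSb(\OO;\wV) \;\simeq\; \bigoplus_{\pp \in \mSpec \oo}\ccSb(G/D_\pp ; \wV_{\pp,L}),
\]
and similarly for $\ccSr(-;\wV)$ and $\ccSr(-;\val)$ (these are exactly the decompositions displayed just before the corollary, which follow because an $\wR$-valued function is upper $\wR$-semicontinuous, bounded and compactly supported on a disjoint union precisely when each of its restrictions is, with only finitely many nonzero restrictions).

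Finally, composing $\wF$ with these orbit decompositions gives the required $G$-equivariant isomorphism in (i). Parts (ii) and (iii) then follow immediately by combining parts (ii) and (iii) of Theorem \ref{thmmain1} with the same orbit decomposition. There is no substantive obstacle to overcome here; the only point that deserves a word of care is the $G$-equivariance of $\wF$, and invoking the naturality of Remark \ref{remnatural} avoids any direct computation with the defining formula for $\wf_I$.
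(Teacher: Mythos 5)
Your proposal is correct and follows essentially the same route as the paper: the paper obtains this corollary directly by combining Theorem \ref{thmmain1} with the orbit decomposition $\ccSb(\OO;\wV) \simeq \bigoplus_\pp \ccSb(G/D_\pp;\wV_{\pp,L})$ (and its analogues for $\ccSr$), and it justifies $G$-equivariance of $\wF$ by appealing to the naturality of $\wF$ from Remark \ref{remnatural}, exactly as you do by specializing $\eta = \sigma$. Your write-up simply spells out the details the paper leaves implicit.
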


We define quotient monoids 
\begin{align*}
\cM_\pp:&= \ccSb (G/D_\pp; \wV_\pp ) /
\ccSr(G/D_\pp; \val_\pp ), \\
\cN_\pp:&= \ccSr(G/D_\pp; \wV_\pp )/
\ccSr(G/D_\pp; \val_\pp ). 
\end{align*}
The $G$-action on 
$\ccSb(G/D_\pp; \wV_\pp )$ 
induces the $G$-actions on  
$\cM_\pp$ and $\cN_\pp$. 
By Corollary \ref{corthmmain}, 
we immediately obtain the following. 

\begin{cor}\label{corcorthmmain}
We have a natural $G$-equivariant isomorphism
\[
\Cl (\OO)/\Pic (\OO)  \simeq  
\bigoplus_{\pp \in \mSpec \oo} \cM_\pp
\]
of abelian monoids. Moreover, the image of 
$\Reg \Cl (\OO) /\Pic (\OO) $ by 
this isomorphism is equal to 
$\bigoplus_{\pp \in \mSpec \oo} \cN_\pp
=\bigoplus_{\pp \in \Ram (\oo)} \cN_{\pp}$.
\end{cor}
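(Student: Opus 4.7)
The plan is to combine Corollary \ref{corthmmain} with the decomposition in Corollary \ref{corthmmain1} and then identify the summands in which the ``regular quotient'' is trivial.

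First I would apply Corollary \ref{corthmmain}, which states that $\wF$ descends to an isomorphism $\Cl (\OO)/\Pic (\OO) \xrightarrow{\simeq} \ccSb(\OO;\wV)/\ccSr(\OO;\val)$ and sends $\Reg\Cl(\OO)/\Pic(\OO)$ isomorphically onto $\ccSr(\OO;\wV)/\ccSr(\OO;\val)$. Next, Corollary \ref{corthmmain1} gives $G$-equivariant decompositions
\[
\ccSb(\OO;\wV) \simeq \bigoplus_{\pp \in \mSpec \oo} \ccSb(G/D_\pp ; \wV_\pp), \qquad
\ccSr(\OO;\wV) \simeq \bigoplus_{\pp} \ccSr(G/D_\pp ; \wV_\pp),
\]
and similarly for $\val$. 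Since direct sums commute with taking quotients by a sub-monoid that itself decomposes as a direct sum, I would take the quotient of the first isomorphism by the image of $\cP(\OO) \subseteq \cI(\OO)$, which sits in $\bigoplus_{\pp}\ccSr(G/D_\pp;\val_\pp)$, and conclude
\[
\Cl(\OO)/\Pic(\OO) \simeq \bigoplus_{\pp \in \mSpec \oo} \cM_\pp.
\]
The same argument applied to $\Reg\cF(\OO)$ (in place of $\cF(\OO)$) yields the image of $\Reg\Cl(\OO)/\Pic(\OO)$ as $\bigoplus_\pp \cN_\pp$.

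For $G$-equivariance I would invoke the naturality of $\wF$ from Remark \ref{remnatural}: applied to the automorphisms $\sigma \in G = \Gal(L/K)$ viewed as morphisms $(\oo, L/K) \to (\oo, L/K)$ in $\Dom(K)$, naturality gives commutative squares translating the $G$-action on $\cF(\OO)$ (and thus on $\Cl(\OO)/\Pic(\OO)$) to the $G$-action on the target, which in turn matches the $G$-actions on each $\cM_\pp$ induced from the right $G$-action on $G/D_\pp$.

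Finally, for the identification $\bigoplus_\pp \cN_\pp = \bigoplus_{\pp \in \Ram(\oo)} \cN_\pp$, I would note that for $\pp \notin \Ram(\oo)$ one has $v_{\pp,L}(L^\times) = V_\pp$ by definition of $\Ram(\oo)$, so by the defining case-split of $\wV_{\pp,L}$ the equality $\wV_{\pp,L} = \val_{\pp,L}$ holds; hence $\ccSr(G/D_\pp ; \wV_\pp) = \ccSr(G/D_\pp ; \val_\pp)$ and $\cN_\pp = 0$. None of the steps presents real difficulty; the only point that requires a little care is making sure the quotient ``commutes with the direct sum,'' but this is immediate because each constituent $\ccSr(G/D_\pp ; \val_\pp)$ is already a subgroup of the corresponding $\ccSb(G/D_\pp ; \wV_\pp)$ factor.
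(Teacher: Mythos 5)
Your proposal takes essentially the same route as the paper: cite Corollary~\ref{corthmmain} to get $\Cl(\OO)/\Pic(\OO)\simeq \ccSb(\OO;\wV)/\ccSr(\OO;\val)$ and $\Reg\Cl(\OO)/\Pic(\OO)\simeq \ccSr(\OO;\wV)/\ccSr(\OO;\val)$, apply the $G$-equivariant direct sum decompositions (stated just before Corollary~\ref{corthmmain1}, as a consequence of Corollary~\ref{corGhomeo}, rather than being the content of Corollary~\ref{corthmmain1} itself), and use that $\wV_\pp=\val_\pp$, hence $\cN_\pp$ is trivial, when $\pp\notin\Ram(\oo)$.

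One sentence is off, though: you say you ``take the quotient \ldots by the image of $\cP(\OO)\subseteq\cI(\OO)$.'' Quotienting $\ccSb(\OO;\wV)$ by $\wF(\cP(\OO))$ would give $\Cl(\OO)$, not $\Cl(\OO)/\Pic(\OO)$, and it would not decompose as $\bigoplus_\pp\cM_\pp$ since $\cM_\pp$ is defined via the \emph{full} $\ccSr(G/D_\pp;\val_\pp)$. The correct submonoid to quotient by is $\wF(\cI(\OO))=\ccSr(\OO;\val)\simeq\bigoplus_\pp\ccSr(G/D_\pp;\val_\pp)$ — but since you have already invoked Corollary~\ref{corthmmain}, which has done exactly this quotient, nothing further needs quotienting: you only need to substitute the direct sum decompositions into numerator and denominator. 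With that one line corrected, the proof is complete and matches the paper's (one-line) argument.

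Your treatment of $G$-equivariance via the naturality of $\wF$ from Remark~\ref{remnatural}, with $\sigma\in G$ viewed as endomorphisms of $(\oo,L/K)$ in $\Dom(K)$, is exactly the mechanism the paper alludes to in \S\ref{ssGal}, and the remark that the component-wise quotient is legitimate because each $\ccSr(G/D_\pp;\val_\pp)$ sits inside the corresponding $\ccSb(G/D_\pp;\wV_\pp)$ factor is the right justification.
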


\begin{cor}\label{corClifford2}
If $\OO$ has the  
finite character property, then we have 
\[
\Reg 
\Cl (\OO)/\Pic (\OO) \simeq \bigoplus_{\pp \in \Ram(\oo)}
\,   
\bigoplus_{\sigma D_\pp \in G/D_\pp} 
\bigg(\{0 \} \sqcup \R/v_\pp (L^\times)
\bigg).
\] 
\end{cor}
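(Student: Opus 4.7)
The plan is to combine Corollary \ref{corcorthmmain} with the description of the monoids of semicontinuous functions on discrete spaces given by the isomorphisms (\ref{eqfinsupp}) in Remark \ref{remfindex}, and then identify the local quotients $\wV_\pp/\val_\pp$ via the case analysis immediately preceding Corollary \ref{corClifford1}.

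First I would invoke Corollary \ref{corcorthmmain}, which already yields a $G$-equivariant isomorphism
\[
\Reg \Cl(\OO)/\Pic(\OO) \simeq \bigoplus_{\pp \in \mSpec \oo} \cN_\pp = \bigoplus_{\pp \in \Ram(\oo)} \cN_\pp,
\]
where $\cN_\pp = \ccSr(G/D_\pp; \wV_{\pp,L})/\ccSr(G/D_\pp; \val_{\pp,L})$, and where the second equality uses that $\wV_{\pp,L} = \val_{\pp,L}$ whenever $\pp \notin \Ram(\oo)$, which forces $\cN_\pp = 0$ outside $\Ram(\oo)$. Thus it suffices to identify each $\cN_\pp$ for $\pp \in \Ram(\oo)$ with $\bigoplus_{\sigma D_\pp \in G/D_\pp}(\{0\} \sqcup \R/v_\pp(L^\times))$.

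Next, I would use the finite character hypothesis. By Lemma \ref{lemdisc}, the finite character property is equivalent to $\mSpec \OO$ being discrete in the Krull-like topology, and via the homeomorphism (\ref{eqhomeoG/D}) this forces each $G/D_\pp$ to be discrete as well. On a discrete space every $\wR$-valued function is automatically upper $\wR$-semicontinuous, so that compactness of support simply means finite support. Applying this to both $\wV_{\pp,L}$-valued and $\val_{\pp,L}$-valued functions, exactly as in (\ref{eqfinsupp}), gives
\[
\ccSr(G/D_\pp; \wV_{\pp,L}) \simeq \bigoplus_{\sigma D_\pp \in G/D_\pp} \wV_{\pp,L}, \qquad \ccSr(G/D_\pp; \val_{\pp,L}) \simeq \bigoplus_{\sigma D_\pp \in G/D_\pp} \val_{\pp,L},
\]
compatibly with the inclusion of one monoid into the other. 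Since both direct sums are indexed by the same set $G/D_\pp$ and the inclusion is termwise, taking the quotient gives
\[
\cN_\pp \simeq \bigoplus_{\sigma D_\pp \in G/D_\pp} \wV_{\pp,L}/\val_{\pp,L}.
\]

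Finally, for $\pp \in \Ram(\oo)$ we have $V_\pp = \R \neq v_\pp(L^\times)$, so by the definition of $\wV_{\pp,L}$ we get $\wV_{\pp,L}/\val_{\pp,L} = \{0\} \sqcup \R/v_\pp(L^\times)$ (this is the displayed equality used to prove Corollary \ref{corClifford1}). Substituting yields the asserted decomposition. There is really no obstacle here beyond bookkeeping; the only place that requires any care is verifying that the inclusion $\ccSr(G/D_\pp; \val_{\pp,L}) \hookrightarrow \ccSr(G/D_\pp; \wV_{\pp,L})$ respects the indexing by $G/D_\pp$ on both sides, which is immediate because the identification is simply the evaluation of a finitely supported function at each coset.
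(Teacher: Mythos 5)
Your proof is correct and follows exactly the chain of deductions the paper intends: Corollary \ref{corcorthmmain} reduces the problem to identifying each $\cN_\pp$ for $\pp\in\Ram(\oo)$, the finite character hypothesis together with Lemma \ref{lemdisc} and the isomorphisms (\ref{eqfinsupp}) from Remark \ref{remfindex} replace the monoids of semicontinuous functions on the discrete space $G/D_\pp$ by finitely supported functions, and the termwise quotient $\wV_{\pp,L}/\val_{\pp,L}$ is computed by the case analysis stated just before Corollary \ref{corClifford1}. The one point that quietly underlies the termwise passage to quotients of direct sums is that $\val_{\pp,L}$ is a \emph{group}, so that quotienting commutes with direct sums; you invoke this implicitly, and it is the same justification the paper relies on.
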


\begin{rem}\label{remKM}
Let $p$ be a prime number. 
If $L/K$ is the cyclotomic $\Z_p$-extension of $\Q$, 
then the $G$-monoid structure of $\Cl (\OO)$ 
was studied by Konomi and Morisawa in \cite{KM}. 
In this case, they proved that  
\begin{eqnarray}\label{eqisomClcyc}
\Cl (\OO) 
\simeq \Pic (\OO) \oplus \cM_{p \Z}
\simeq 
\Pic (\OO) \sqcup 
\bigg(
\Pic (\OO) \oplus \R/\Z[1/p]
\bigg).
\end{eqnarray}
Our Corollary \ref{corcorthmmain} and 
Corollary \ref{corClifford2} 
can be regarded as a generalization of 
their result modulo $\Pic (\OO)$. 
Note that formal application of 
Corollary \ref{corcorthmmain} and 
Corollary \ref{corClifford2} to this case 
implies only that there exist isomorphisms 
\[
\Cl (\OO) /\Pic (\OO) \simeq \cM_{p \Z} 
\simeq \{ 0 \} \sqcup \R/\Z[1/p].
\] 
The existence of 
the isomorphism (\ref{eqisomClcyc})
follows from the special circumstance in this case 
that any invertible  ideal $I$ of $\OO$ 
satisfying $I \cap \Z=p^n \Z$ for some $n \in \Z$
is principal.  
\end{rem}

\begin{rem}[Underlying monoid structure and topology]\label{remtopsp}
For any submonoid $M$ of $\wR$, 
the monoid structure of $\ccSb (G/D_\pp ; M)$ 
and $\ccSr (G/D_\pp ; M)$ (if defined)
depend only on the homeomorphism class of 
the topological space $G/D_\pp$. 
Note that any compact Hausdorff
space without isolated point which has 
a countable open base consisting of clopen subsets
is homeomorphic to the Cantor set $C$ equipped with 
the subspace topology induced by $\R$. 
So, if $L$ is a composite of countably many 
finite extension fields of $K$, and if 
the index of $D_\pp$ in $G$ is infinite, 
then $G/D_\pp$ is 
homeomorphic to $C$. 
In general, 
if the index of $D_\pp$ in $G$ is infinite, then 
we have a continuous surjection $G/D_\pp \longrightarrow C$.
In particular, if $\wV_\pp \ne \val_\pp$, and 
if $G/D_\pp$ is infinite, then 
there exists a surjective homomorphism of abelian monoids, 
from $\ccSr(G/D_\pp; \wV_\pp )$  (resp.\ $\ccSb(G/D_\pp; \wV_\pp )$)
onto the monoid consisting of all the $\R$-valued continuous 
(resp.\ bounded upper semicontinuous) functions on the Cantor set, 
which contains the restrictions of all the $\R$-valued continuous 
(resp.\ bounded upper semicontinuous) functions on the closed interval $[0,1]$.
(For the existence of such surjections, 
see Corollary \ref{corwff}.) 
By this observation, it seems that  
the monoids $\ccSb(G/D_\pp; \wV_\pp )$ and  
$\ccSr(G/D_\pp; \wV_\pp )$ become quite large 
if $\wV_\pp \ne \val_\pp$.
\end{rem}

\section{Upper $\wR$-semicontinuous functions}\label{secmonoid}

In this section, we introduce the notion of 
upper $\wR$-semicontinuous functions, 
and observe their basic properties. 
First, let us define the monoid $\wR$.

\begin{defn}\label{deftotordabmonoid}
As in \S \ref{ssmainthm}, 
we put $\wR:= \R \times \{ 0,1 \}$.
We write $(\alpha ,s) < (\beta ,t)$ if and only if 
either of the following holds:
\begin{enumerate}[(i)]
\item We have $\alpha < \beta$. 
\item We have $\alpha =\beta $, and $s<t$. 
\end{enumerate}
For two elements $a,b \in \wR$, 
we write $ a \le b$ if $a=b$, or if $a < b$. 
We define the sum of $(\alpha ,s), (\beta ,t) \in \wR$ by 
$(\alpha ,s) + (\beta,t):= 
(\alpha+ \beta, \max\{ s,t \})$.
As in \S \ref{ssmainthm}, 
we define an involution $\iota$ on $\wR$ by 
$\iota (\alpha, s):=(- \alpha,s)$
for each $(\alpha,s) \in \wR$. 
Throughout this article, we let 
$\pi_0 \colon \wR \longrightarrow \R$ and 
$\pi_1 \colon \wR \longrightarrow \{ 0,1 \}$
be the projections. 
For each $a \in \wR$, we define 
$\wR_{<a}:= \{ x \in \wR \mathrel{\vert} 
x < a \}$. 
\end{defn}

By definition, we can easily check the following. 

\begin{lem}\label{lemtptom}
The triple $(\wR ,+ ,\le )$ is a totally ordered abelian monoid. 
Namely, the set $\wR$ is an abelian monoid 
quipped with the monoid law $+$, 
and the relation $\le$ defined above is a total order on $\wR$ satisfying
$a+c \le b+c$ for any $a,b,c \in M$ with $a \le b$. 
\end{lem}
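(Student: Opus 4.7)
The plan is to verify the three constituent claims in turn: that $(\wR,+)$ is an abelian monoid, that $(\wR,\le)$ is a totally ordered set, and that the compatibility $a+c\le b+c$ holds whenever $a\le b$. Each part reduces to the corresponding property in the components $\R$ and $\{0,1\}$, so the work is essentially bookkeeping in terms of the projections $\pi_0$ and $\pi_1$.

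For the monoid structure I would check commutativity, associativity, and the existence of an identity. Commutativity of $+$ follows because $\alpha+\beta=\beta+\alpha$ in $\R$ and $\max\{s,t\}=\max\{t,s\}$ in $\{0,1\}$. Associativity likewise reduces to associativity of addition in $\R$ together with associativity of $\max$ on $\{0,1\}$ (both trivial). The element $(0,0)$ serves as identity, since $\max\{s,0\}=s$ for all $s\in\{0,1\}$. Thus $(\wR,+)$ is an abelian monoid.

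For the total order, I would verify reflexivity, antisymmetry, transitivity, and totality of $\le$. Reflexivity is immediate from the definition via the clause $a=b$. For antisymmetry, if $a\le b$ and $b\le a$ with $a\ne b$, then the strict version forces both $\pi_0(a)<\pi_0(b)$ and $\pi_0(b)<\pi_0(a)$ (or both with equal $\pi_0$ and the analogous strict inequality in $\{0,1\}$), a contradiction. Transitivity splits into cases according to whether strict inequality occurs in the first coordinate or in the second; in each case the conclusion follows from transitivity of $<$ on $\R$ and on $\{0,1\}$. Totality follows by comparing $\pi_0(a)$ and $\pi_0(b)$ first, then, if they coincide, comparing $\pi_1(a)$ and $\pi_1(b)$.

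Finally, for the compatibility, I would take $a=(\alpha,s)$, $b=(\beta,t)$ with $a\le b$, and $c=(\gamma,u)$, and split into two cases. If $\alpha<\beta$, then $\alpha+\gamma<\beta+\gamma$, so $a+c<b+c$ irrespective of the second coordinates. If $\alpha=\beta$ and $s\le t$, then the first coordinates of $a+c$ and $b+c$ agree and $\max\{s,u\}\le\max\{t,u\}$, giving $a+c\le b+c$. This exhausts the cases, completing the verification. No step is genuinely an obstacle; the only thing to be careful about is handling the two-case split cleanly so that equality in the first coordinate is propagated correctly to $a+c$ and $b+c$.
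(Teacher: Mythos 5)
Your verification is correct and is exactly the routine definition-chase the paper has in mind; the paper itself simply remarks ``By definition, we can easily check the following'' and omits the details. The only place to tighten slightly is the antisymmetry step: from $a\le b$, $b\le a$, $a\ne b$ you get $a<b$ and $b<a$, and the two strict inequalities can a priori fall into \emph{different} clauses of the lexicographic definition (e.g.\ one via $\pi_0(a)<\pi_0(b)$, the other via $\pi_0(b)=\pi_0(a)$ and $\pi_1(b)<\pi_1(a)$), but all four combinations contradict trichotomy in $\R$ or $\{0,1\}$, so your conclusion stands. The case split in the compatibility argument is clean and exhaustive, and correctly uses $\max\{s,u\}\le\max\{t,u\}$ when $s\le t$.
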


Let $X$ be a topological space, 
and $M$ a submonoid of $\wR$. 

\begin{defn}\label{defbdduppersemicont}
Let $\wf \colon X \longrightarrow M$ be any function. 
\begin{enumerate}[{\rm (i)}]
\item Let $x \in X$. 
We say that $f$
is {\it upper $\wR$-semicontinuous at $x$} 
if and only if for any $\alpha \in \wR$ with $\wf(x) < \alpha$, 
there exists an open neighborhood $W_\alpha$ of $x$ 
in $X$ satisfying   
$\wf(y) < \alpha$ for any $y \in W_\alpha$.  
\item We say that  $\wf$
is {\it upper $\wR$-semicontinuous} 
if and only if  for any $x \in X$, 
the function $\wf$ is  upper $\wR$-semicontinuous at $x$. 
We denote by $\cSu (X;M)$ 
the set of  $M$-valued upper $\wR$-semicontinuous functions on $X$.
\item We say that $\wf$ is {\it compactly supported} 
if and only if   there exists a compact subset $Y$ of $X$ 
such that $\wf(x)=(0,0)$ for any $x \in X \setminus Y$. 
We denote by $\ccSu (X;M)$ 
the set of $M$-valued compactly supported  
upper $\wR$-semicontinuous functions on $X$.
\item We say that $\wf$ is {\it bounded above} (resp.\ {\it below}) 
if and only if   there exists an element 
$a \in \wR$ such that 
$\wf(x) \le a$ 
(resp.\ $a \le \wf(x)$)
for any $x \in X$. 
If $\wf$ is bounded above and below, 
then we say that $\wf$ is {\it bounded}.
We denote by $\ccSb (X;M)$ 
the set of  $M$-valued bounded, 
compactly supported and  
upper $\wR$-semicontinuous functions on $X$.
\item Suppose that $\iota (M)=M$. 
In this article, we say that $f$ is 
{\it a reflexible upper $\wR$-semicontinuous function on $X$}
if and only if both $\wf$ and $\iota\circ \wf$ are contained in 
$\ccSb (X;M)$. 
We denote by $\ccSr (X;M)$ 
the set of  $M$-valued 
compactly supported  
reflexible 
upper $\wR$-semicontinuous functions on $X$.
\end{enumerate}
\end{defn}

\begin{rem}
By definition, we can easily show that a function 
$\wf \colon X \longrightarrow M$ 
is upper semicontinuous  
if and only if for any $a \in \wR$, 
the set $\wf^{-1} (\wR_{< a})$ is 
open in $X$.  
\end{rem}

Let us see some basic properties of 
upper $\wR$-semicontinuous functions.

\begin{lem}\label{lembdd}
Suppose that $X$ is compact. 
Let $\wf \colon X \longrightarrow M$ be any function. 
If $\wf$ is upper $\wR$-semicontinuous,  
then $\wf$ is bounded above. 
Moreover, if $\wf$ is reflexible 
upper $\wR$-semicontinuous,  
then $\wf$ is bounded. 
\end{lem}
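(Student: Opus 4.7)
The plan is to mimic the classical argument that real-valued upper semicontinuous functions on compact spaces attain bounds from above, being careful about the lexicographic order on $\wR$ and the (not order-reversing) involution $\iota$.

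For the first assertion, I would exhaust $X$ by the level sets
\[
U_n := \wf^{-1}\left( \wR_{<(n,0)} \right) \qquad (n \in \N).
\]
Each $U_n$ is open because $\wf$ is upper $\wR$-semicontinuous, and $\{U_n\}_{n \in \N}$ covers $X$: for every $x \in X$, writing $\wf(x)=(\alpha,s)$ with $\alpha \in \R$, one has $\wf(x)< (n,0)$ for any integer $n > \alpha$. By compactness of $X$, there is a finite subcover, and since the $U_n$ are nested ($U_n \subseteq U_{n+1}$), this means $X=U_N$ for some $N$. Hence $\wf(x) < (N,0)$ for all $x \in X$, so $\wf$ is bounded above by $(N,0)$.

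For the moreover part, assume $\wf$ is reflexible, so that $\iota \circ \wf$ is also upper $\wR$-semicontinuous (and $M$-valued, as $\iota(M)=M$). Applying the first assertion to $\iota \circ \wf$, there exists an integer $N$ with $\iota(\wf(x)) \le (N,0)$ for every $x \in X$. Writing $\wf(x)=(\alpha, s)$, this inequality becomes $(-\alpha, s) \le (N, 0)$, which by the lexicographic order forces $-\alpha \le N$, i.e.\ $\alpha \ge -N$. Consequently $\wf(x) \ge (-N, 0)$ for every $x \in X$ (since either $\alpha > -N$, in which case $(-N,0)< (\alpha,s)$, or $\alpha = -N$, in which case $(-N,0) \le (-N,s)=\wf(x)$). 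Combined with the upper bound already obtained for $\wf$, this gives that $\wf$ is bounded.

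The only mildly delicate point is the last step: unlike the projection $\pi_0$, the involution $\iota$ does not reverse the lexicographic order on $\wR$ because it leaves the second coordinate fixed, so one cannot simply say ``bounded above by $b$'' implies ``bounded below by $\iota(b)$'' by a general order-theoretic principle. I handle this by unwinding the definition of the lexicographic order directly on coordinates, as sketched above. Everything else is a routine finite-subcover argument.
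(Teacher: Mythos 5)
Your proof is correct and follows essentially the same finite-subcover argument as the paper; the paper covers $X$ by $\{\wf^{-1}(\wR_{<a})\}_{a\in\wR}$ and extracts the maximum of a finite subcover, while you restrict to the cofinal nested subfamily $\{U_n\}_{n\in\N}$, which is a cosmetic variant. One small merit of your write-up: the paper disposes of the ``moreover'' part with a one-line appeal to applying the same argument to $\iota\circ\wf$, whereas you explicitly verify that an upper bound $(N,0)$ on $\iota\circ\wf$ gives the lower bound $(-N,0)$ on $\wf$ by unwinding the lexicographic order --- a point worth making precise since $\iota$ is not order-reversing on $\wR$.
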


\begin{proof}
Suppose that $\wf$ is $\wR$-semicontinuous.  
Then, the collection  
\(
\{ 
\wf^{-1}(\wR_{< a })\}_{a \in \wR}
\) 
forms an  open covering of $X$. 
Since $X$ is compact, 
there exists a finite subset $A$ of $\wR$
such that $X=\bigcup_{a \in A}\wf^{-1}(\wR_{< a })$. 
Since $(\wR, \le )$ is totally ordered, 
the finite subset $A$ of $\wR$ has 
the maximum element. Put $a_0 := \max A$. 
Then, we obtain $X=\wf^{-1}(\wR_{< a_0 })$. 
This implies that $X$ is bounded above.
By applying the above arguments to $\iota \circ \wf$, 
we obtain the second assertion. 
\end{proof}

The following criterion of upper $\wR$-semicontinuity is useful.

\begin{lem}\label{lemusccriterion}
Let $\wf \colon X \longrightarrow M$ be any function. 
Put $f:= \pi_0 \circ \wf \colon X \longrightarrow \R$. 
Let $x \in X$ be an element. 
The function $\wf$ is upper $\wR$-semicontinuous at $x$ 
if and only if the following {\rm (A)} and {\rm (B)} hold.
\begin{enumerate}[{\rm (A)}]
\item If $\pi_1(\wf(x))=1$, 
then the $\R$-valued function $f$ is 
upper semicontinuous at $x$ in the usual sense. 
\item If $\pi_1(\wf(x))=0$, then 
there exists an open neighborhood $W$ of $x$ in $X$ 
satisfying the following two conditions. 
\begin{enumerate}[{\rm (a)}]
\item We have $f(y) \le f(x)$ for any $y \in W$. 
\item For any $y \in W$ satisfying $\wf(y)=\wf(x)$, 
it holds that $\pi_1 (\wf(y))=0$. 
\end{enumerate} 
\end{enumerate}
\end{lem}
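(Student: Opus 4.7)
The plan is to prove both implications by directly unwinding the definition of upper $\wR$-semicontinuity, keeping careful track of the two cases of the lexicographic order on $\wR$. For clarity I would first record the basic translation: for any $y \in X$ and $\alpha = (\beta, t) \in \wR$, we have $\wf(y) < \alpha$ if and only if either $f(y) < \beta$, or $f(y) = \beta$ and $\pi_1 (\wf(y)) < t$. This single observation lets me convert every step of the argument into conditions on the $\R$-valued function $f$ and the $\{0,1\}$-valued function $\pi_1 \circ \wf$.

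Next I would split into the two cases of $\pi_1(\wf(x))$. Suppose first that $\pi_1(\wf(x)) = 1$, so $\wf(x) = (f(x), 1)$. An element $\alpha = (\beta, t)$ satisfies $\wf(x) < \alpha$ if and only if $\beta > f(x)$ (the subcase $\beta = f(x)$ is excluded because $t \le 1$). Thus the upper $\wR$-semicontinuity of $\wf$ at $x$ reduces to requiring that for every $\beta > f(x)$ there is an open neighborhood $W$ of $x$ with $f(y) < \beta$ for all $y \in W$, which is exactly the usual upper semicontinuity of $f$ at $x$. This gives the equivalence with condition (A).

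Now suppose $\pi_1(\wf(x)) = 0$, so $\wf(x) = (f(x), 0)$. The inequality $\wf(x) < \alpha = (\beta, t)$ splits into two subcases: (I) $\beta > f(x)$ with $t \in \{0,1\}$ arbitrary, and (II) $\beta = f(x)$ and $t = 1$. For the forward implication, assuming $\wf$ is upper $\wR$-semicontinuous at $x$, I apply the definition to $\alpha = (f(x), 1)$: the resulting neighborhood $W$ satisfies $\wf(y) < (f(x), 1)$ for all $y \in W$, which by the translation above is precisely conditions (a) and (b) of (B). For the converse, assume (B) and let $W$ be the neighborhood provided. For any $\alpha = (\beta, t)$ with $\wf(x) < \alpha$: in subcase (II), the neighborhood $W$ itself works by (a) and (b); in subcase (I), by shrinking $W$ if necessary using the usual upper semicontinuity of $f$ at $x$ (which follows from (a) since $f(y) \le f(x) < \beta$ on $W$), we find a neighborhood on which $f(y) < \beta$, hence $\wf(y) < \alpha$.

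The argument is mostly mechanical once the order is translated into conditions on $\pi_0$ and $\pi_1$; there is no substantial obstacle. The only subtlety is making sure that in the case $\pi_1(\wf(x)) = 0$ we do not forget subcase (II), since it is the reason the extra condition (b) in (B) is needed beyond the naive inequality $f(y) \le f(x)$.
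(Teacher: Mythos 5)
Your proof is correct and follows essentially the same approach as the paper: a direct unwinding of the definition of upper $\wR$-semicontinuity via casework on the lexicographic order on $\wR$, with the forward direction obtained by testing $\alpha=(\beta,0)$ for $\beta>f(x)$ and $\alpha=(f(x),1)$, and the converse by the two subcases $\beta>f(x)$ and $(\beta,t)=(f(x),1)$. One remark: you have read condition (b) as ``for any $y \in W$ with $f(y)=f(x)$'' rather than the literal ``$\wf(y)=\wf(x)$'' of the statement; the latter is vacuous when $\pi_1(\wf(x))=0$, so this is a typo in the paper, and your reading is the intended one (it is also what the paper's own proof and its later application in the proof of Lemma~\ref{semicontmon} use).
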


\begin{proof}
Suppose that $\wf$ is 
upper $\wR$-semicontinuous function on $X$.
Put  
$\alpha := f(x)$. 
Let $\beta \in \R$  
be any element satisfying $\alpha < \beta$. 
In particular, we have $\beta \ge \alpha$. 
By the $\wR$-semicontinuity of $\wf$, 
there exists an open neighborhood $W_0$ of $x$ 
such that $\wf(y) < (\beta,0)$ for any $y \in W_0$.
This implies that $f(y) < \beta$ for any $y \in W_0$. 
So, the function $\wf$ satisfies (A).  
Assume that $\pi_1 (f(x))=0$. 
Then, the $\wR$-semicontinuity of $\wf$ implies that 
there exists an open neighborhood $W_1$ of $x$ 
such that for any $y \in W_1$, we have 
$\wf(y) < (\alpha,1)$, namely  
$\wf(y) \le (\alpha,0)$. 
This implies that $\wf$ satisfies (B).  

Conversely, suppose that $\wf$ satisfies (A) and (B). 
Take any $b:=(\beta,t) \in \wR$ 
with $\wf(x)<b$. 
First, we assume that $\pi_1 (f(x))=1$. 
Then, we have $\alpha < \beta$. 
By the assumption (A), there exists an 
open neighborhood $W_3$ of $x$ 
such that for any $y \in W_3$, we have $f(y) < \beta$. 
So, we obtain $\wf(y) < (\beta ,0) \le b$
for any $y \in W_3$. 
This implies that $\wf$ is 
upper $\wR$-semicontinuous at $x$.
Next, we assume that $\pi_1 (f(x))=0$.
Then, it follows from 
the assumption (B) that there exists an 
open neighborhood $W_4$ of $x$ 
satisfying (a) and (b).
Then, for any $y \in W_4$, we obtain   
$\wf(y) \le (\alpha ,0) < b$. 
Hence $\wf$ is 
$\wR$-semicontinuous at $x$.
This completes the proof of 
\ref{lemusccriterion}. 
\end{proof}

By applying Lemma \ref{lemusccriterion} to 
$\wf$ and $\iota \circ \wf$, 
we immediately obtain the following corollary.

\begin{cor}\label{correfcriterion}
Let $\wf \colon X \longrightarrow M$ be any function. 
Put $f:= \pi_0 \circ \wf \colon X \longrightarrow \R$. 
Let $x \in X$ be an element. 
The function $\wf$ is reflexible 
upper $\wR$-semicontinuous at $x$ 
if and only if the following {\rm (A)} and {\rm (B)}  hold.
\begin{enumerate}[{\rm (A)}]
\item If $\pi_1(\wf(x))=1$, 
then the $\R$-valued function $f$ is 
continuous at $x$. 
\item If $\pi_1(\wf(x))=0$, then 
there exists an open neighborhood $W$ of $x$ in $X$ 
such that $f(y) = f(x)$ for any $y \in W$. 
\end{enumerate}
\end{cor}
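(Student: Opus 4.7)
The plan is to derive the corollary by invoking Lemma \ref{lemusccriterion} once for $\wf$ and once for $\iota\circ\wf$, then combining the outputs. By definition, a function is reflexible upper $\wR$-semicontinuous at $x$ precisely when both $\wf$ and $\iota\circ\wf$ are upper $\wR$-semicontinuous at $x$. Since the involution $\iota(\alpha,s)=(-\alpha,s)$ negates the $\pi_0$-component and preserves the $\pi_1$-component, we have $\pi_0\circ\iota\circ\wf=-f$ and $\pi_1\circ\iota\circ\wf=\pi_1\circ\wf$. In particular, the dichotomy on the value of $\pi_1(\wf(x))$ that governs Lemma \ref{lemusccriterion} is identical for the two functions, so no independent case analysis is needed.

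In the case $\pi_1(\wf(x))=1$, condition (A) of the lemma applied to $\wf$ asserts that $f$ is upper semicontinuous at $x$ in the usual sense, while the same condition applied to $\iota\circ\wf$ asserts that $-f$ is upper semicontinuous, i.e.\ that $f$ is lower semicontinuous at $x$. The conjunction of these two conditions is precisely continuity of $f$ at $x$, which is condition (A) of the corollary; conversely, continuity of $f$ delivers both upper and lower semicontinuity and hence both halves of the lemma's condition (A).

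In the case $\pi_1(\wf(x))=0$, condition (B)(a) of the lemma applied to $\wf$ produces an open neighborhood $W_1$ of $x$ on which $f(y)\le f(x)$; applied to $\iota\circ\wf$ it produces an open neighborhood $W_2$ on which $-f(y)\le -f(x)$, equivalently $f(y)\ge f(x)$. Setting $W:=W_1\cap W_2$, we obtain $f(y)=f(x)$ for every $y\in W$, which is the corollary's condition (B). For the converse, condition (B) of the corollary supplies both inequalities simultaneously, while the auxiliary clause (b) of the lemma is automatic on $W$ because $\pi_1$ is preserved by $\iota$ and the relevant constraint reduces to one already compatible with $\pi_1(\wf(x))=0$. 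Both directions in both cases having been handled, the corollary follows with no further work; there is no substantive obstacle here, as Lemma \ref{lemusccriterion} carries the real content and the corollary is essentially a bookkeeping step that symmetrizes it under $\iota$.
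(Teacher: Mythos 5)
Your strategy --- applying Lemma \ref{lemusccriterion} to $\wf$ and to $\iota\circ\wf$ and combining --- is precisely the derivation the paper has in mind, and your handling of case (A) (upper plus lower semicontinuity gives continuity) is fine. The problem is your treatment of clause (b) of the lemma in case (B). You declare it ``automatic on $W$ because $\pi_1$ is preserved by $\iota$,'' but that reasoning does not explain anything. As literally printed, clause (b) ranges over $y\in W$ with $\wf(y)=\wf(x)$ and is then trivially true whenever $\pi_1(\wf(x))=0$, for reasons that have nothing to do with $\iota$. However, the lemma's own proof (and its reuse in the proof of Lemma \ref{semicontmon}) make clear that (b) is a misprint for the condition on $y\in W$ with $f(y)=f(x)$, not $\wf(y)=\wf(x)$. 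With that reading, clause (b) on a neighborhood where $f\equiv f(x)$ demands $\pi_1(\wf(y))=0$ for \emph{every} $y$ in that neighborhood; this is an extra constraint that condition (B) of the corollary --- which only asks that $f\equiv f(x)$ near $x$ --- does not deliver. Concretely, take $\wf(x)=(0,0)$, $\wf\equiv(0,1)$ on a dense subset of some punctured neighborhood of $x$, and $\wf\equiv(0,0)$ elsewhere; then $f\equiv 0$ so (B) holds, yet $\wf$ is not upper $\wR$-semicontinuous at $x$. So the backward implication in case (B) does not go through as you claim. Either the corollary's (B) should be read as additionally requiring $\pi_1(\wf(y))=0$ on $W$ (which then makes the converse work), or you should flag the dependency explicitly instead of waving it away as automatic.
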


By Corollary \ref{correfcriterion}, 
we deduce the following.

\begin{cor}\label{corlocconst}
Suppose that $\iota (M)=M$. 
If $\pi_1 (M)= \{ 0 \}$,  
then all elements 
of $\ccSr (X;M)$ are locally constant. 
\end{cor}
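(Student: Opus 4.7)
The plan is to deduce Corollary \ref{corlocconst} as an immediate application of Corollary \ref{correfcriterion}. Fix an arbitrary $\wf \in \ccSr(X;M)$ and an arbitrary point $x \in X$; the goal is to exhibit an open neighborhood of $x$ on which $\wf$ is constant.

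First I would use the hypothesis $\pi_1(M) = \{0\}$ to observe that $\pi_1(\wf(y)) = 0$ for every $y \in X$, since $\wf$ takes values in $M$. In particular, $\pi_1(\wf(x)) = 0$, so condition (B) of Corollary \ref{correfcriterion} applies at the point $x$: there exists an open neighborhood $W$ of $x$ in $X$ such that $f(y) = f(x)$ for all $y \in W$, where $f := \pi_0 \circ \wf$.

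Combining these two observations, for every $y \in W$ we have
\[
\wf(y) = (\pi_0(\wf(y)), \pi_1(\wf(y))) = (f(y), 0) = (f(x), 0) = \wf(x),
\]
so $\wf$ is constant on $W$. Since $x \in X$ was arbitrary, this shows that $\wf$ is locally constant, completing the proof.

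There is no real obstacle here: the content of the corollary is essentially a restatement of case (B) of Corollary \ref{correfcriterion} under the additional hypothesis that the first alternative (A), concerning points where $\pi_1(\wf(x)) = 1$, cannot occur. The only point worth emphasizing in the write-up is that membership in $M$, rather than merely in $\wR$, is what forces $\pi_1 \circ \wf \equiv 0$ and thereby upgrades the local constancy of $f$ to the local constancy of $\wf$ itself.
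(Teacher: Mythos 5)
Your proof is correct and follows exactly the route the paper intends: the paper states Corollary \ref{corlocconst} as an immediate consequence of Corollary \ref{correfcriterion}, and you have simply unpacked why case (B) of that criterion, combined with $\pi_1 \circ \wf \equiv 0$, yields local constancy. No gaps.
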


By Lemma \ref{lemusccriterion} and 
Corollary \ref{correfcriterion}, 
we also obtain the following corollary. 

\begin{cor}\label{corwff}
Suppose that $M$ contains $\pi_0 (M) \times \{1 \}$. 
For any $\R$-valued
continuous (resp.\ upper semicontinuous)
function $f \colon X \longrightarrow 
\pi_0(M)$, there exists an element 
$\wf \in \ccSr (X;M)$ (resp.\ $\wf \in \ccS (X;M)$)
such that $\pi_0 \circ \wf=f$.
\end{cor}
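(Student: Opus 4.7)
The strategy is a tautological lift: given $f \colon X \to \pi_0(M)$, I simply define $\wf \colon X \to M$ by $\wf(x) := (f(x), 1)$ for every $x \in X$. First, I would observe that this is well-defined as a function into $M$: by hypothesis, $\pi_0(M) \times \{1\} \subseteq M$, and $f$ takes values in $\pi_0(M)$, so $(f(x), 1) \in M$ for every $x$. Also, $\pi_0 \circ \wf = f$ by construction, which is the desired equality.

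Next, I would verify the required upper $\wR$-semicontinuity by applying the pointwise criteria already established, namely Lemma \ref{lemusccriterion} (for the upper semicontinuous case) and Corollary \ref{correfcriterion} (for the continuous, reflexible case). The central observation is that $\pi_1(\wf(x)) = 1$ \emph{identically} on $X$. Therefore, in both criteria, the alternative (B), which only applies when $\pi_1(\wf(x)) = 0$, is vacuous at every point. Only alternative (A) must be checked: in the upper semicontinuous case it asks exactly that $\pi_0 \circ \wf = f$ be upper semicontinuous at $x$ in the usual sense, and in the reflexible case it asks exactly that $f$ be continuous at $x$. Both hold by the hypothesis on $f$. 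Hence $\wf$ lies in the appropriate class of upper $\wR$-semicontinuous (resp.\ reflexible upper $\wR$-semicontinuous) functions.

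The main obstacle is essentially none: the result is a formal consequence of the characterization lemmas in \S\ref{secmonoid}, which were set up precisely so that a function identically valued at second coordinate $1$ behaves with respect to $\wR$-semicontinuity exactly as its first coordinate behaves with respect to the classical notion. The only subtlety worth a sentence in the write-up is pointing out that any support or boundedness clauses built into the target monoid either follow from the corresponding properties of $f$ or are automatic in the settings of application (where $X$ is compact, as in the quotient spaces $G/D_\pp$ arising in Remark \ref{remtopsp}).
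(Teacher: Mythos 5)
Your proof is exactly the paper's own argument: the paper's proof consists solely of the one-line observation ``for given $f$, we can take the function $\wf \colon X \longrightarrow M$ defined by $\wf(x) = (f(x),1)$,'' relying implicitly on Lemma \ref{lemusccriterion} and Corollary \ref{correfcriterion}. Your additional remark about compact support being automatic only when $X$ is compact (as it is in all the paper's applications, e.g.\ $X = G/D_\pp$) is a correct and worthwhile observation that the paper leaves tacit.
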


\begin{proof}
Indeed, for given $f$, 
we can take the function 
$\wf \colon X \longrightarrow M$ defined 
by $\wf (x)=(f(x),1)$ for each $x \in X$. 
\end{proof}

\begin{lem}\label{semicontmon}
The set $\cSu (X;M)$ together with 
the sum of functions 
is an abelian monoid.   
\end{lem}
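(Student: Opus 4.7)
The plan is to show that $\cSu(X;M)$ is closed under pointwise addition; associativity, commutativity and the identity axiom then follow at once from the fact that $(M,+)$ is an abelian submonoid of $\wR$, together with the observation that the constant function $0$ is trivially upper $\wR$-semicontinuous (the preimage of any $\wR_{<a}$ is either $\emptyset$ or $X$). Thus the entire content of the lemma is the closure statement: if $\wf,\wg \in \cSu(X;M)$, then $\wf + \wg \in \cSu(X;M)$.

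Fix $x \in X$, write $\wf(x)=(\beta_1,t_1)$ and $\wg(x)=(\beta_2,t_2)$, so that $(\wf+\wg)(x)=(\beta_1+\beta_2,\max\{t_1,t_2\})$. Given $a=(\alpha,s)\in\wR$ with $(\wf+\wg)(x) < a$, I would produce an open neighborhood $W$ of $x$ on which $\wf+\wg < a$ by splitting into the following two cases according to whether the comparison in $\wR$ is strict in the first coordinate or not.

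In the case $\beta_1+\beta_2 < \alpha$, choose $\varepsilon > 0$ with $\beta_1+\beta_2+2\varepsilon \le \alpha$. Since $\wf(x) < (\beta_1+\varepsilon,0)$ and $\wg(x) < (\beta_2+\varepsilon,0)$, upper $\wR$-semicontinuity of $\wf$ and $\wg$ at $x$ provides open neighborhoods $W_1,W_2$ of $x$ on which $\wf(y) < (\beta_1+\varepsilon,0)$ and $\wg(y) < (\beta_2+\varepsilon,0)$ respectively. On $W := W_1\cap W_2$ the first coordinate of $\wf+\wg$ is then strictly less than $\beta_1+\beta_2+2\varepsilon \le \alpha$, so $(\wf+\wg)(y) < (\alpha,0) \le a$. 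In the remaining case $\beta_1+\beta_2 = \alpha$, the lexicographic inequality $(\beta_1+\beta_2,\max\{t_1,t_2\}) < (\alpha,s)$ forces $s=1$ and $t_1=t_2=0$. Hence $\wf(x) < (\beta_1,1)$ and $\wg(x) < (\beta_2,1)$, so there exist open neighborhoods $W_1,W_2$ of $x$ on which $\wf(y) \le (\beta_1,0)$ and $\wg(y) \le (\beta_2,0)$. On $W := W_1 \cap W_2$ we then have $(\wf+\wg)(y) \le (\alpha,0) < (\alpha,1)=a$, as required.

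Since $x$ and $a$ were arbitrary, $\wf + \wg$ is upper $\wR$-semicontinuous. The main (mild) obstacle is the boundary case $\beta_1+\beta_2 = \alpha$, where one must exploit the lexicographic structure of $\wR$ to read off $s=1,\ t_1=t_2=0$ and then control the second coordinate of the sum; the point is that $\max\{t_1,t_2\}=0$ throughout $W$, so the sum stays strictly below $(\alpha,1)$ even when its first coordinate attains $\alpha$. Alternatively, one may reorganize the argument using Lemma \ref{lemusccriterion} by separately checking conditions (A) and (B) at $x$ according to whether $\pi_1((\wf+\wg)(x))$ equals $1$ or $0$; the two cases above correspond exactly to these two branches of the criterion.
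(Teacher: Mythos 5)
Your proof is correct. You also discuss the trivial monoid axioms briefly (the paper takes them for granted and only addresses closure), and you remark on the alternative via Lemma~\ref{lemusccriterion}, so you are aware of both angles.

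The route differs mildly from the paper's. The paper routes the closure argument through Lemma~\ref{lemusccriterion}: it first observes (by that criterion) that $f_1 := \pi_0 \circ \wf_1$ and $f_2 := \pi_0 \circ \wf_2$ are ordinary $\R$-valued upper semicontinuous functions, uses the standard fact that their sum is upper semicontinuous to dispose of the branch $\pi_1((\wf_1+\wf_2)(x))=1$, and then handles the branch $\pi_1(\wf_1(x))=\pi_1(\wf_2(x))=0$ by intersecting the two neighborhoods furnished by condition (B) of the criterion. Your proof instead works directly from Definition~\ref{defbdduppersemicont}: you fix the threshold $a=(\alpha,s)$ and split on whether the first coordinate of the sum at $x$ is strictly below $\alpha$ or equal to it, which is a case split on the comparison with $a$ rather than on the second coordinate at $x$. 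In the equality case the lexicographic order forces $s=1$ and both $\pi_1$'s to vanish at $x$, and then you argue directly that $\wf(y)+\wg(y)\le(\beta_1,0)+(\beta_2,0)=(\alpha,0)<a$ on the intersection of the two neighborhoods, using only the order-compatibility of $+$ on $\wR$ (Lemma~\ref{lemtptom}). The upshot: the paper's version is slightly shorter because it can cite the classical fact about sums of $\R$-valued upper semicontinuous functions and because Lemma~\ref{lemusccriterion} is reused elsewhere, whereas yours is more self-contained and makes the lexicographic mechanism fully explicit. Both are valid; there is no gap.
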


\begin{proof}
Take any $\wf_1, \wf_2 \in \cSu (X;M)$. 
We need to prove that $\wf_1+\wf_2$ 
is upper $\wR$-semicontinuous. 
Let $x \in X$ be any element. 
Put $f_i:= \pi \circ \wf_i$ for each $i \in \{ 0,1 \}$. 
By Lemma \ref{lemusccriterion}, 
the $\R$-valued functions 
$f_1$ and $f_2$ are upper semicontinuous. 
Since the sum of two $\R$-valued 
upper semicontinuous functions 
is also upper semicontinuous, 
the function $f_1 + f_2$ is upper semicontinuous.
So, by Lemma \ref{lemusccriterion}, 
we deduce that $\wf_1 + \wf_2$ is upper
$\wR$-semicontinuous at $x$
if $\pi_1(\wf_1 (x) + \wf_2 (x))=1$, 
namely if $\pi_1(\wf_1 (x))=1$ or $\pi_1( \wf_2 (x))=1$. 
Suppose that $\pi_1 (\wf_1 (x))=\pi_1 (\wf_1 (x))=0$.  
Then, by Lemma \ref{lemusccriterion}, 
for each $i \in \{ 0,1 \}$, 
there exists an open neighborhood $W_i$ of $x$ in $X$ 
satisfying the following (a) and (b). 
\begin{enumerate}[{\rm (a)}]
\item We have $f_i(y) \le f_i(x)$ for any $y \in W_i$. 
\item For any $y \in W_i$ satisfying $f_i(y)=f_i(x)$, 
it holds that $\pi_1 (\wf_i(y))=0$. 
\end{enumerate} 
Put $W:=W_1 \cap W_2$.
Then, by definition, 
the set $W$ is an open neighborhood of $x$, 
and $f_1 (y) + f_2(y) \le f_1 (x) + f_2(x)$
for any $y \in W$.
Suppose that $y \in W$ satisfies 
$f_1(y) + f_2(y)=f_1(x) + f_2(x)$. 
Then, by the property (a) of 
the sets $W_1$ and $W_2$, 
we deduce that 
$f_1(y) = f_1(x)$ and $f_2(y) = f_2(x)$. 
So, for each $i \in \{ 0,1 \}$
the property (b) of 
$W_i$ implies that 
$\pi_1 (\wf_i(y))=0$. 
Hence  
$\pi_1 (\wf_1(y)+\wf_2(y))=0$.
By Lemma \ref{lemusccriterion}, 
the sum $\wf_1 + \wf_2$ is also upper
$\wR$-semicontinuous at $x$  when 
$\pi_1 (\wf_1 (x))=\pi_1 (\wf_2 (x))=0$.
This implies that $\wf_1 + \wf_2 \in\cSu (X;M)$.
So, the set $\in\cSu (X;M)$
becomes 
a semigroup. 
Since $\cSu (X;M)$ contains 
the constant function $(0,0)$, 
which is the identity of the addition, 
the semigroup $\cSu (X;M)$ is a monoid. 
\end{proof}

The following two corollaries are clear.

\begin{cor}
The sets 
$\ccSu (X;M)$ and $\ccSb (X;M)$ 
are submonoids of $\cSu (X;M)$.
\end{cor}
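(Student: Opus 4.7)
The plan is to reduce the claim to three routine verifications, using Lemma \ref{semicontmon} to get upper $\wR$-semicontinuity of sums for free. Specifically, I need to check: (a) both $\ccSu(X;M)$ and $\ccSb(X;M)$ contain the identity element of $\cSu(X;M)$; (b) $\ccSu(X;M)$ is closed under addition; (c) $\ccSb(X;M)$ is closed under addition. The identity of $\cSu(X;M)$ is the constant function $(0,0)$, whose support is empty (hence trivially compact) and which is bounded, so (a) is immediate.

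For (b), I would start with $\wf_1, \wf_2 \in \ccSu(X;M)$ having supporting compact sets $Y_1, Y_2 \subseteq X$ (so that $\wf_i(x) = (0,0)$ for $x \notin Y_i$). The key observation is that if $x \notin Y_1 \cup Y_2$, then both $\wf_i(x) = (0,0)$, and hence $(\wf_1 + \wf_2)(x) = (0,0) + (0,0) = (0,0)$ by the definition of addition in $\wR$. Therefore the ``support'' of $\wf_1 + \wf_2$ is contained in $Y_1 \cup Y_2$, which is compact as a finite union of compact sets. Combined with Lemma \ref{semicontmon}, this gives $\wf_1 + \wf_2 \in \ccSu(X;M)$.

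For (c), I would take $\wf_1, \wf_2 \in \ccSb(X;M)$. Compact support of the sum follows from (b) since $\ccSb(X;M) \subseteq \ccSu(X;M)$. For boundedness, if $a_i \in \wR$ is an upper bound for $\wf_i$ and $b_i \in \wR$ is a lower bound, then by the monotonicity of $+$ with respect to $\le$ recorded in Lemma \ref{lemtptom}, we get $b_1 + b_2 \le \wf_1(x) + \wf_2(x) \le a_1 + a_2$ for all $x \in X$, so $\wf_1 + \wf_2$ is bounded.

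There is no real obstacle here: the entire argument is a direct verification once one observes that $(0,0)$ is absorbing-under-sum with itself and that $\le$ is compatible with $+$ on $\wR$. The only minor subtlety to keep track of is the precise definition of ``support'' (any compact set outside which the function vanishes), which makes the containment of supports under sums formal rather than requiring equality of supports.
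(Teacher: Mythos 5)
Your proof is correct and spells out exactly the routine verifications that the paper leaves implicit (the paper simply asserts that this corollary is ``clear'' after Lemma \ref{semicontmon}): the constant function $(0,0)$ lies in both sets, compact support is preserved under sums because $(0,0)+(0,0)=(0,0)$ forces the support of $\wf_1+\wf_2$ into the union $Y_1 \cup Y_2$, and boundedness is preserved by the monotonicity of $+$ from Lemma \ref{lemtptom}.
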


\begin{cor}\label{correfgrp}
Suppose that $\iota (M)=M$. 
Then, the set 
$\ccSr (X;M)$ is a submonoid of  
$\ccSb (X;M)$. 
Moreover, 
if $\pi_1(M)=\{ 0 \}$,  
then $\ccSr (X;M)$ is a group. 
\end{cor}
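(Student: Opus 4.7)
The plan is to prove the two assertions separately, with the key observation being that the involution $\iota$ on $\wR$ is compatible with the monoid law: for any $(\alpha,s),(\beta,t) \in \wR$ we have $\iota((\alpha,s)+(\beta,t)) = (-(\alpha+\beta),\max\{s,t\}) = \iota(\alpha,s)+\iota(\beta,t)$. In particular, $\iota$ extends pointwise to a monoid endomorphism of $\cSu(X;\wR)$ (when $\iota(M)=M$, of $\cSu(X;M)$).

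For the first assertion, I would take any $\wf_1, \wf_2 \in \ccSr(X;M)$ and verify directly that $\wf_1+\wf_2 \in \ccSr(X;M)$, i.e.\ that both $\wf_1+\wf_2$ and $\iota \circ (\wf_1+\wf_2)$ lie in $\ccSb(X;M)$. The first membership is immediate from the preceding corollary stating that $\ccSb(X;M)$ is a submonoid of $\cSu(X;M)$. For the second, the compatibility of $\iota$ with $+$ noted above gives $\iota \circ (\wf_1+\wf_2) = (\iota\circ\wf_1) + (\iota\circ\wf_2)$; since each $\iota\circ\wf_i$ lies in $\ccSb(X;M)$ by definition of $\ccSr$, the sum is again in $\ccSb(X;M)$ by the same corollary. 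This shows $\ccSr(X;M)$ is closed under $+$, and it contains the constant function $(0,0)$, so it is a submonoid.

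For the second assertion, assume moreover that $\pi_1(M)=\{0\}$. Given $\wf \in \ccSr(X;M)$, the natural candidate for an inverse is $\iota\circ\wf$, which lies in $\ccSr(X;M)$ because both $\iota\circ\wf \in \ccSb(X;M)$ (by the definition of $\ccSr$) and $\iota\circ(\iota\circ\wf)=\wf \in \ccSb(X;M)$. To conclude, I would check pointwise that $\wf+\iota\circ\wf$ is the neutral element: setting $f=\pi_0\circ\wf$, at each $x \in X$ we have
\[
\wf(x)+\iota(\wf(x)) = \bigl(f(x)-f(x),\ \max\{\pi_1(\wf(x)),\pi_1(\wf(x))\}\bigr) = (0,\pi_1(\wf(x))),
\]
and the hypothesis $\pi_1(M)=\{0\}$ forces $\pi_1(\wf(x))=0$, so $\wf+\iota\circ\wf$ equals the constant function $(0,0)$, the identity of $\ccSb(X;M)$.

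No serious obstacle is expected: the content is entirely formal, and the only place where care is needed is the verification that the second coordinate of $\wf(x)+\iota(\wf(x))$ really vanishes, which is exactly where the hypothesis $\pi_1(M)=\{0\}$ enters. (Without this hypothesis, even for $\wf$ with $\pi_1\circ\wf \equiv 1$, the pointwise sum would be $(0,1)\neq(0,0)$, so $\ccSr(X;M)$ would generally fail to be a group.)
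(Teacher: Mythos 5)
Your proof is correct and takes essentially the same route as the paper's (commented-out) argument: both rely on $\iota$ being a monoid endomorphism of $\wR$, so that $\iota\circ(\wf_1+\wf_2)=\iota\circ\wf_1+\iota\circ\wf_2$ gives closure of $\ccSr(X;M)$ under addition, and both identify $\iota\circ\wf$ as the inverse via the pointwise computation $a+\iota(a)=(0,0)$ for $a$ with $\pi_1(a)=0$, which is exactly where $\pi_1(M)=\{0\}$ is needed.
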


\if0
\begin{proof}
Suppose that $\iota (M)=M$.
Note that the element $(0,0)$ is 
clearly contained in $\ccSr (X;M)$.
For each  $\wf_1, \wf_2 \in \ccSu (X;M)$, 
it holds that 
$\iota \circ (\wf_1 +\wf_2)
=\iota \circ \wf_1 + \iota \circ \wf_2$. 
It follows from these facts that 
$\ccSr (X;M)$ is 
a submonoid of $\ccSu (X;M)$.

Let us assume the additional 
hypothesis that $\pi_1(M)=\{ 0 \}$. 
Let $\wf \in \ccSr (X;M)$ 
be any element. 
By Lemma \ref{semicontmon}, 
it suffices to show that 
there exists  
the group theoretic inverse of 
$\wf$ in $\ccSr (X;M)$ . 
Note that we have 
$\iota \circ \wf \in \ccSr (X;M)$. 
Since  we have $a+\iota (a)=(0,0)$
for any $a \in \wR$ with $\pi_1(a)=0$,  
the assumption that $\pi_1(M)=\{ 0 \}$
imply that $\iota \circ \wf$ is 
the group theoretic inverse of $\wf$.
\end{proof}

\fi

\section{A topology on the maximal spectrum}\label{ssclasstop}

In this section, we introduce some topological notions 
related to commutative rings.
In \S \ref{sstop}, we define a topology on 
$\mSpec R$ called Krull-like topology 
for each commutative ring $R$, 
and observe some basic properties. 
In \S \ref{ssval}, we give some remarks on 
valuations on $\OO$. 

\subsection{The Krull-like topology}\label{sstop}

Let $R$ be a commutative ring. 
As in \S \ref{ssmainthm}, 
we denote by $\Fin (R)$  the collection of all finite subsets of $R$, 
and  
for each $A \in \Fin (R)$, 
we put $\cU (A)=\cU_R(A):=\{ \pp 
\in \mSpec R \mathrel{\vert} A \subseteq \pp \}$. 
Note that for any  $A,B \in \Fin (R)$, 
we clearly have
$A \cup B \in \Fin (R)$, and 
$\cU(A) \cap \cU(B)= \cU (A \cup B)$. 
So, we can define a topology on $\mSpec \OO$
whose open base is given by   
$\BB(R):=\{ \cU (A) \mathrel{\vert} A \subseteq \Fin (R) \}$.

\begin{defn}\label{defntop}
We call the topology on $\mSpec R$ 
with the open base $\BB(R)$ 
{\it the Krull-like topology of $\mSpec R$}. 
In this article, we always regard 
$\mSpec R$ as a topological space 
equipped with 
the Krull-like topology. 
\end{defn}

\begin{rem}\label{lemnoethdisc}
Let $R$ be a commutative ring. 
By definition, the following are clear. 
\begin{enumerate}[(i)]
\item For each $\PP \in \mSpec R$, 
the collection $\{ \cU(A) \mathrel{\vert} 
A \in \Fin (\PP)  \}$ forms 
a fundamental system of neighborhoods of $\PP$. 
\item Let $R_0$ be a subring of $R$, 
and $I$ an ideal of $R_0$ 
generated by a finite set $A_I$. 
Then, the set $\cU_R(A_I)$ consists of 
all the maximal ideals of $R$ containing $I$. 
In particular, if $I$ is a maximal ideal of $R_0$, 
then    $\cU_R(A_I)=\{\PP \in \mSpec R 
\mathrel{\vert} \PP \cap R_0 =I \}$.
\item If $R$ is a Noetherian ring, then 
$\mSpec R$  is discrete. 
\end{enumerate}
\end{rem}

Let $(\oo,L/K)$ and $\OO$ be as in \S \ref{secintro}.
We denote by $\IF (L/K)$ the set of 
all intermediate field of $L/K$
which is finite over $K$. 
Take any $M \in \IF (L/K)$. 
We define $\oo_M$ to be the integral closure of $\oo$ in $M$, 
namely $\oo_M := \OO \cap M$. 
Since $M/K$ is finite, 
the ring $\oo_M$ is a Dedekind domain. 
Since the ring $\OO$ is integral over $\oo_M$, 
we have a surjection $\mSpec \OO 
\longrightarrow \mSpec \oo_M$ 
defined by $\PP \mapsto \PP \cap M=\PP \cap \oo_M$.

For each $M,M' \in \IF (L/K)$ with $M \subseteq M$, 
we define a surjection
\[
\mathrm{res}_{M',M}\colon 
\mSpec \oo_{M'} \longrightarrow \mSpec \oo_M
\] 
defined by $\pp \longmapsto \pp \cap M$. 
(Note that the map $\mathrm{res}_{M,M'}$ is clearly 
continuous since Remark \ref{lemnoethdisc} (iii) implies that 
$\mSpec \oo_M$ is discrete for any $M \in IF (L/K)$.)
Since the collection
$\{ \mathrm{res}_{M,M'} \mathrel{\vert} 
M,M' \in \IF (L/K), M \subseteq M' \}$
forms a projective system of topological spaces, 
we can define the projective limit 
\[
\Lim:=\varprojlim_{M \in \IF (L/K)} 
\mSpec \oo_M
\] 
equipped with the limit topology. 
For each $M \in \IF (L/K)$, 
we denote by 
\[
\res_M \colon 
\Lim \longrightarrow 
\mSpec \oo_M
\]
the projection to the $M$-component. 
By definition, the map $\res_M$ is continuous. 
Note that the collection 
\[
\BB(\Lim):=
\{ \res_M^{-1} (\pp) \mathrel{\vert} 
M \in \IF (L/K), \ 
\pp \in \mSpec \oo_M  \}
\]
forms an open base of $\Lim$.

\begin{lem}\label{lemprojhomeo}
We have a homeomorphism 
\[
\L \colon 
\mSpec \OO \longrightarrow \Lim ;\ 
\PP \longmapsto (\PP \cap M)_M.
\] 
\end{lem}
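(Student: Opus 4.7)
The plan is to verify in turn that $\L$ is well-defined, bijective, continuous, and open. The map genuinely lands in $\Lim$ because $(\PP \cap M') \cap M = \PP \cap M$ whenever $M \subseteq M'$, and $\PP \cap \oo_M$ really is maximal in $\oo_M$ since $\OO$ is integral over $\oo_M$ (the standard fact that contractions of maximal ideals under integral extensions are maximal). Injectivity is immediate from $\OO = \bigcup_{M \in \IF (L/K)} \oo_M$: the equalities $\PP_1 \cap M = \PP_2 \cap M$ for every $M$ force $\PP_1 = \bigcup_M (\PP_1 \cap \oo_M) = \bigcup_M (\PP_2 \cap \oo_M) = \PP_2$.

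For surjectivity, given $(\pp_M)_M \in \Lim$, I would define $\PP := \bigcup_{M \in \IF(L/K)} \pp_M$ and check it is a maximal ideal of $\OO$. Compatibility of the system gives $\pp_M = \pp_{M'} \cap \oo_M \subseteq \pp_{M'}$ whenever $M \subseteq M'$, and any two fields $M_1, M_2 \in \IF (L/K)$ are dominated by their compositum, so the family $\{\pp_M\}_M$ is directed under inclusion; hence $\PP$ is an ideal, and it is proper since $1 \notin \pp_M$ for any $M$. The main obstacle of the proof is establishing maximality of $\PP$. For this, I would note that for $x \in \oo_M$, one has $x \in \PP$ iff $x \in \pp_{M'}$ for some $M' \supseteq M$, which by the compatibility $\pp_{M'} \cap \oo_M = \pp_M$ is equivalent to $x \in \pp_M$; so the induced map $\oo_M/\pp_M \hookrightarrow \OO/\PP$ is injective, and $\OO/\PP = \varinjlim_M \oo_M/\pp_M$ is a directed union of fields, hence a field. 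By construction $\L(\PP) = (\pp_M)_M$, completing surjectivity.

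It remains to match the topologies. For continuity of $\L$ it suffices to pull back the basic opens of $\Lim$: given $M \in \IF (L/K)$ and $\pp \in \mSpec \oo_M$, choose a finite generating set $A$ of $\pp$ (possible since $\oo_M$ is Noetherian); then Remark \ref{lemnoethdisc}(ii) identifies $\L^{-1}(\res_M^{-1}(\pp))=\{\PP \mathrel{\vert} \PP \cap \oo_M = \pp\}$ with $\cU_\OO(A)$, which is open. For openness, given $A \in \Fin (\OO)$, I would pick any $M \in \IF (L/K)$ with $A \subseteq \oo_M$ (possible since $A$ is finite) and use the equivalence $A \subseteq \PP \Leftrightarrow A \subseteq \PP \cap \oo_M$ together with surjectivity to obtain
\[
\L(\cU_\OO(A)) = \bigcup_{\pp \in \mSpec \oo_M,\ A \subseteq \pp} \res_M^{-1}(\pp),
\]
which is a union of basic opens in $\Lim$. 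Once the maximality step in the surjectivity argument is in hand, the remaining verifications are formal.
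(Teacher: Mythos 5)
Your proposal follows essentially the same route as the paper's proof: continuity by pulling back $\res_M^{-1}(\pp)$ to $\cU_\OO(A)$ via a finite generating set $A$ of $\pp$, and openness by pushing $\cU_\OO(A)$ forward through some $M$ containing $A$ (the paper fixes $M=K(A)$). The paper dismisses bijectivity as ``clear'' by exhibiting the inverse $(\PP_M)_M\mapsto\bigcup_M\PP_M$, whereas you carefully verify that this union is a directed, proper, maximal ideal (via the directed-union-of-fields argument for $\OO/\PP$); this fills in a detail the paper leaves implicit but does not change the strategy.
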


\begin{proof}
The bijectivity of the map $\L$ is clear  
since we have the inverse 
\[
\Lim \longrightarrow \mSpec \OO;\ 
(\PP_M)_M \longmapsto 
\bigcup_{M \in \IF(L/K)} \PP_M. 
\]
We need to prove that the map is continuous and open. 
First, let us show that $\L$ is continuous. 
Fix any $W \in \BB(\Lim)$. 
Then, there exist $M \in \IF (L/K)$ and
$\pp \in \mSpec \oo_\M$ 
such that $W:= \res_{M}^{-1} (\pp)$.
Since $\oo_M$ is Noetherian, 
there exists a finite subset $A$ of the ring $\oo_M$ 
generating the ideal $\pp$. 
By definition, we have 
\[
\L^{-1} (W) = \{ 
\PP \in \mSpec \OO \mathrel{\vert} 
\PP \cap M = \pp
\}
=\cU(A).
\]
This implies that 
$\L^{-1}(W)$ is open in $\mSpec \OO$. 
So, the map $\L$ is continuous. 

Next, let us show that $\L$ is an open map.
Fix a finite subset $A$ of $\OO$, and put
$M:=K(A) \in \IF (L/K)$. 
Since $A$ is contained in $\oo_M=\OO \cap M$, 
we have 
\[
\L (\cU (A)) = \res_M^{-1} \left(
\{\pp \in \mSpec \oo_M \mathrel{\vert} 
A \subseteq \pp \}
\right) 
\]
Since $\res_M$ is continuous, 
and since $\mSpec \oo_M$ is discrete, 
we deduce that $\L (\cU (A))$ is open in $\Lim$. 
This implies that $\L$ is an open map. 
\end{proof}

\begin{cor}\label{corcUAcpt}
Let $A \in \Fin (\OO)$ be any element 
containing a non-zero element.  
Then, the subset $\cU(A)$ of $\mSpec \OO$ 
is compact. 
\end{cor}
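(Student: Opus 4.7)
The plan is to use the homeomorphism $\L$ of Lemma \ref{lemprojhomeo} to transfer the question into one about projective limits of finite discrete spaces, and then conclude by an application of Tychonoff's theorem.

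First, since $A$ is finite, one can take $M := K(A) \in \IF(L/K)$, so that $A \subseteq \oo_M$. Put $V := \{\pp \in \mSpec \oo_M \mathrel{\vert} A \subseteq \pp\}$. As in the proof of Lemma \ref{lemprojhomeo}, one has $\L(\cU(A)) = \res_M^{-1}(V)$. Because $\oo_M$ is a Dedekind domain and $A$ contains a non-zero element $a$, the ideal $a \oo_M$ is non-zero and hence is contained in only finitely many maximal ideals of $\oo_M$; in particular $V$ is finite. More generally, for every $M' \in \IF (L/K)$ with $M' \supseteq M$, the set $V_{M'} := \{\pp' \in \mSpec \oo_{M'} \mathrel{\vert} A \subseteq \pp'\}$ is finite by the same argument, since $\oo_{M'}$ is a Dedekind domain and $a \in \oo_{M'} \setminus \{0\}$.

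Next, I would use the fact that the subcollection $\{M' \in \IF(L/K) \mathrel{\vert} M' \supseteq M\}$ is cofinal in $\IF(L/K)$, so that $\Lim$ can be computed as $\varprojlim_{M' \supseteq M} \mSpec \oo_{M'}$. The point here is to identify $\res_M^{-1}(V)$ with a projective limit of finite discrete sets. Indeed, for $(\pp_{M'})_{M' \supseteq M} \in \Lim$, one has $\pp_M \in V$ if and only if $\pp_{M'} \supseteq \pp_M \supseteq A$ for every $M' \supseteq M$, which is equivalent to $\pp_{M'} \in V_{M'}$ for every such $M'$. Thus $\res_M^{-1}(V) = \varprojlim_{M' \supseteq M} V_{M'}$, viewed as a closed subspace of the product $\prod_{M' \supseteq M} V_{M'}$ equipped with the product topology.

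Finally, since each $V_{M'}$ is a finite (hence compact Hausdorff) discrete space, Tychonoff's theorem yields the compactness of $\prod_{M' \supseteq M} V_{M'}$, and the projective limit is a closed subset of this product (the compatibility conditions $\mathrm{res}_{M'',M'}(\pp_{M''}) = \pp_{M'}$ are closed conditions), so $\res_M^{-1}(V)$ is compact. By the homeomorphism $\L$ of Lemma \ref{lemprojhomeo}, it follows that $\cU(A) = \L^{-1}(\res_M^{-1}(V))$ is compact. The only real subtlety is the bookkeeping to rewrite $\res_M^{-1}(V)$ as the projective limit of the finite sets $V_{M'}$; once this is done, everything else is routine.
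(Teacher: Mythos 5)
Your proof is correct and follows essentially the same route as the paper: reduce $\cU(A)$ to a projective limit of finite discrete sets (the prime ideals of the intermediate Dedekind rings $\oo_{M'}$ containing $A$) and apply Tychonoff's theorem. The paper states the homeomorphism $\cU(A) \simeq \varprojlim_{A \subseteq M \in \IF(L/K)} \cU_{\oo_M}(A)$ directly, whereas you obtain it by passing through $\L$ and computing $\res_M^{-1}(V)$, but the underlying argument is identical.
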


\begin{proof}
Fix any $A \in \Fin (\OO)$. 
We may assume that $0$ does not belongs to $A$
since we have $\cU(A)=\cU(A \setminus \{0 \})$. 
Take any $M \in \IF (L/K)$ containing $A$.   
Note that the set $\cU_{\oo_M}(A)
= \{ \pp \in \mSpec \oo_M
\mathrel{\vert} \pp  \supseteq A \}$ 
is finite since $\cU_{\oo_M}(A)$ 
coincides with 
the set of all prime ideals of $\oo_M$
dividing the non-zero ideal 
$\oo_M\prod_{a \in A} a $, 
and since $\oo_M$ is a Dedekind domain. 
Similarly to the proof of Lemma \ref{lemprojhomeo}, 
we can easily show that the map
\[
\cU(A) \longrightarrow 
\varprojlim_{A \subseteq 
M \in \IF (L/K)} \cU_{\oo_M}(A);\ 
\PP' \longmapsto (\PP' \cap M)_M. 
\]
is a homeomorphism. 
So, Tychonoff's theorem implies that 
$\cU(A)$ is compact.  
\end{proof}

\begin{cor}\label{corhlc}
The topological space $\mSpec \OO$
is Hausdorff and locally compact.
\end{cor}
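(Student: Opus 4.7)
The plan is to derive both properties from the identification $\mSpec \OO \simeq \Lim$ established in Lemma \ref{lemprojhomeo}, combined with the discreteness (hence Hausdorffness) of each $\mSpec \oo_M$ recorded in Remark \ref{lemnoethdisc}(iii), and the compactness result of Corollary \ref{corcUAcpt}. Neither property should present a serious obstacle; the argument is essentially a matter of assembling the pieces already in place.

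For Hausdorffness, I would take two distinct maximal ideals $\PP_1, \PP_2 \in \mSpec \OO$. Since $\PP_i = \bigcup_{M \in \IF(L/K)} (\PP_i \cap \oo_M)$, if $\PP_1 \cap \oo_M = \PP_2 \cap \oo_M$ held for every $M$ we would get $\PP_1 = \PP_2$, a contradiction; hence there exists some $M \in \IF(L/K)$ with $\PP_1 \cap \oo_M \neq \PP_2 \cap \oo_M$. Because $\mSpec \oo_M$ is discrete, the singletons $\{\PP_i \cap \oo_M\}$ are open, and their preimages $\res_M^{-1}(\{\PP_i \cap \oo_M\})$ under the continuous projection (viewed through the homeomorphism $\L$ of Lemma \ref{lemprojhomeo}) are disjoint open neighborhoods of $\PP_1$ and $\PP_2$ respectively.

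For local compactness, I would argue as follows. Since $\oo$ is not a field by hypothesis, the integral extension $\OO$ of $\oo$ is likewise not a field (its intersection with $K$ is $\oo$), so the zero ideal of $\OO$ is not maximal. Hence every $\PP \in \mSpec \OO$ contains some non-zero element $a$. Setting $A := \{a\} \in \Fin(\OO)$, the basic open set $\cU(A)$ is an open neighborhood of $\PP$, and it is compact by Corollary \ref{corcUAcpt}. Thus every point of $\mSpec \OO$ admits a compact neighborhood, completing the proof.

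The main (and only) thing to be slightly careful about is the existence of the non-zero element $a \in \PP$ used in the local compactness argument, which relies on $\OO$ not being a field; this is ensured by the standing assumption that $\oo$ is a Dedekind domain which is not a field, together with the fact that $\OO \cap K = \oo$.
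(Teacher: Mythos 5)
Your proof is correct and takes essentially the same route as the paper: Hausdorffness from the homeomorphism with the projective limit $\Lim$ together with discreteness of each $\mSpec \oo_M$ (the paper simply invokes that a limit of Hausdorff spaces is Hausdorff, while you spell out the separating neighborhoods), and local compactness directly from Corollary~\ref{corcUAcpt}. Your attention to the existence of a non-zero $a \in \PP$ (via $\OO$ not being a field because $\oo$ is not) is a detail the paper leaves implicit, but the argument is the same.
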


\begin{proof}
The topological space $\Lim$ is Hausdorff 
since for any $M \in \IF (L/K)$, 
the topological space is 
$\mSpec \oo_M$ is discrete, 
in particular Hausdorff.  
So, by Lemma \ref{lemprojhomeo}, 
$\mSpec \OO_M$ is Hausdorff.  
Corollary \ref{corcUAcpt} implies that  
$\mSpec \OO$ is locally compact. 
\end{proof}

\begin{cor}\label{corGhomeo}
If $L/K$ is Galois, 
then we have a $G$-equivariant 
homeomorphism 
\[
\coprod_{\pp \in \mSpec \oo} G/D_\pp 
\longrightarrow \mSpec \OO;\ 
\sigma D_\pp \longmapsto \sigma (\pp_{L}),
\]
where we regard the set 
$\coprod_{\pp \in \mSpec \oo} G/D_\pp $ 
as a topological space equipped 
with the coproduct 
topology. 
\end{cor}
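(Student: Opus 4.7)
The bijectivity and $G$-equivariance of the map in the statement are already recorded in the introduction (see (\ref{eqhomeoG/D})); hence only the topological claim remains. My strategy is to reduce to a fiberwise statement over $\mSpec \oo$ and then invoke the principle that a continuous bijection between compact Hausdorff spaces is automatically a homeomorphism.

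First, I would decompose $\mSpec \OO$ as a topological coproduct over $\mSpec \oo$. The restriction map $\pi \colon \mSpec \OO \to \mSpec \oo$, $\PP \mapsto \PP \cap \oo$, is continuous, since for each $a \in \oo$ one has $\pi^{-1}(\cU_\oo(\{a\})) = \cU_\OO(\{a\})$. Because $\oo$ is a Dedekind domain (hence Noetherian), Remark \ref{lemnoethdisc}\,(iii) makes $\mSpec \oo$ discrete, so each fiber $\pi^{-1}(\pp)$ is clopen in $\mSpec \OO$. Moreover, since $\oo$ has dimension one and $\pp$ is finitely generated, say by a non-empty finite subset $A_\pp \subseteq \oo$, a short verification shows $\pi^{-1}(\pp) = \cU_\OO(A_\pp)$, which is compact by Corollary \ref{corcUAcpt}. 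Together these facts realize $\mSpec \OO$ as the topological coproduct $\coprod_{\pp \in \mSpec \oo} \pi^{-1}(\pp)$, matching the coproduct structure on the left-hand side of the claimed homeomorphism.

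It then suffices, fiber by fiber, to prove that $\varphi_\pp \colon G/D_\pp \to \pi^{-1}(\pp)$, $\sigma D_\pp \mapsto \sigma(\pp_L)$, is a homeomorphism. Both sides are compact Hausdorff: the former because $G$ is profinite under its Krull topology and $D_\pp$ is closed, the latter by the previous paragraph together with Corollary \ref{corhlc}. Since bijectivity is already known, I need only continuity. Lifting to $\tilde\varphi_\pp \colon G \to \mSpec \OO$, $\sigma \mapsto \sigma(\pp_L)$, I would verify continuity on a basic open $\cU_\OO(A)$ with $A \in \Fin(\OO)$ as follows: choose a finite Galois subextension $M/K$ of $L/K$ with $A \subseteq M$ (possible since $L/K$ is Galois and $A$ is finite), so that $\sigma^{-1}(A) \subseteq M$ for every $\sigma \in G$, and
\[
\tilde\varphi_\pp^{-1}(\cU_\OO(A)) = \{\sigma \in G \mathrel{\vert} \sigma^{-1}(A) \subseteq \pp_L \cap M\}.
\]
This condition depends only on the image of $\sigma$ in the finite quotient $\Gal(M/K)$, so the preimage is a finite union of cosets of $\Gal(L/M)$, hence open in the Krull topology on $G$. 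The main obstacle is precisely this continuity check, but the standard trick of approximating a finite subset of $\OO$ by a finite Galois subextension of $L/K$ resolves it. Once continuity is in place, the compact-Hausdorff argument yields that each $\varphi_\pp$ is a homeomorphism, and assembling the fibers gives the result.
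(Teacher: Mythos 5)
Your proposal is correct, and the decomposition step is essentially the same as the paper's: you both express $\mSpec\OO$ as the topological coproduct $\coprod_{\pp\in\mSpec\oo}\cU(A_\pp)$ with $A_\pp$ a finite generating set of $\pp$. The fiberwise comparison, however, is handled differently. The paper reuses Lemma \ref{lemprojhomeo} to identify the fiber $\res_K^{-1}(\pp)\subseteq\Lim$ with a projective limit $\varprojlim_{M}\cU_{\oo_M}(A_\pp)$, and then chains together homeomorphisms $G/D_\pp\simeq\varprojlim_M G/D_\pp U_M\simeq\varprojlim_M\cU_{\oo_M}(A_\pp)$ coming from finite-level Hilbert ramification theory; the open/continuity verifications are absorbed into the structure of the projective limit. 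You instead avoid the projective-limit formalism: you observe both $G/D_\pp$ and $\pi^{-1}(\pp)$ are compact Hausdorff (using Corollaries \ref{corcUAcpt} and \ref{corhlc} for the latter, closedness of $D_\pp$ in the profinite group $G$ for the former), reduce to continuity of the lift $\widetilde\varphi_\pp\colon G\to\mSpec\OO$ via the quotient topology, and check continuity on basic opens $\cU_\OO(A)$ directly by enlarging $A$ to a finite Galois subextension so the condition $\sigma^{-1}(A)\subseteq\pp_L$ factors through a finite quotient of $G$. Your approach is somewhat more elementary and self-contained at the fiber level, trading the paper's structural shortcut for an explicit open-set computation; both are valid.
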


\begin{proof}
For each $\pp \in \mSpec \oo$, 
we fix a finite subset $A_\pp$ of the ring $\oo$ 
generating the ideal $\pp$. 
By Remark \ref{lemnoethdisc} (ii), 
we have 
$\cU (A_\pp)
= \{ \PP \in \mSpec \oo_M
\mathrel{\vert} \PP \cap K = \pp \}$
for each $\pp \in \mSpec \oo$.
So, we obtain the coproduct decomposition
\[
\mSpec \OO = 
\coprod_{\pp \in \mSpec \oo} \cU (A_\pp). 
\]
As in \S \ref{ssGal}, 
we fix a prime $\pp_L$ of $\OO$ above $\pp$.
Then, we have homeomorphisms  
\[
G/D_\pp \simeq 
\varprojlim_{M \in \IF (L/K)}
G/D_\pp U_M \xrightarrow{\ \simeq\ } 
\varprojlim_{M \in \IF (L/K)}
\cU_{\oo_M} (A_\pp) 
\simeq \res^{-1}_K(\pp), 
\]
where the second homeomorphism is given by 
$
(\sigma D_\pp U_M)_M
\longmapsto (\sigma (\pp_{L} \cap M))_M$. 
This implies 
the assertion of Corollary \ref{corGhomeo}. 
\end{proof}

Recall that the ring $\OO$ has the  
finite character property
if and only if any non-zero element of $\OO$
is contained in at most finitely many maximal ideals.

\begin{lem}\label{lemdisc}
The ring $\OO$ has the  
finite character property if and only if 
$\mSpec \OO$ is discrete. 
\end{lem}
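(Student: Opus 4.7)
The plan is to prove both implications directly from the definition of the Krull-like topology together with Corollary \ref{corcUAcpt}, which guarantees compactness of $\cU(A)$ for $A \in \Fin(\OO)$ containing a non-zero element.

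For the implication that the finite character property implies discreteness of $\mSpec \OO$, I would fix an arbitrary $\PP \in \mSpec \OO$ and show that $\{\PP\}$ lies in the open base $\BB(\OO)$. Since $\oo$ is not a field, $\OO$ is not a field, so $\PP$ contains some non-zero element $a$. The finite character property then forces $\cU(\{a\})$ to be a finite set, say $\{\PP, \PP_1, \dots, \PP_n\}$ with $\PP_i \ne \PP$. Because each $\PP_i$ is maximal and distinct from $\PP$, one has $\PP \not\subseteq \PP_i$, so I can pick elements $b_i \in \PP \setminus \PP_i$. Setting $A := \{a, b_1, \dots, b_n\} \in \Fin(\OO)$, any $\qq \in \cU(A)$ must lie in $\cU(\{a\})$, hence equal to some $\PP_i$ or to $\PP$; and $\qq$ cannot be any $\PP_i$ because $b_i \in \qq$ would contradict $b_i \notin \PP_i$. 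Thus $\cU(A) = \{\PP\}$, giving a basic open neighborhood equal to $\{\PP\}$.

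For the converse, I would assume that $\mSpec \OO$ is discrete and fix a non-zero $a \in \OO$. By Corollary \ref{corcUAcpt}, the subset $\cU(\{a\})$ of $\mSpec \OO$ is compact; as a subspace of a discrete space it is itself discrete, and a compact discrete space is finite. Hence $a$ is contained in only finitely many maximal ideals, establishing the finite character property.

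The argument is essentially a two-line deduction from the already-proved Corollary \ref{corcUAcpt} and from standard point-set topology, so there is no substantive obstacle. The only small subtlety to watch is in the first direction: one must verify that each $\PP$ actually contains a non-zero element (which needs the standing hypothesis that $\oo$ is not a field, ensuring $\OO$ is not a field) and one must explicitly use the separation property of $\OO$ at each $\PP_i$ to produce the elements $b_i$. Both are routine.
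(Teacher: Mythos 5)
Your proof is correct. The converse direction is essentially identical to the paper's: invoke Corollary \ref{corcUAcpt} for compactness of $\cU(\{a\})$, observe a compact discrete space is finite. For the forward direction you and the paper diverge slightly: the paper notes that $\cU(\{x\})$ is a finite open neighborhood of $\PP$ and then appeals to the Hausdorff property of $\mSpec\OO$ (Corollary \ref{corhlc}) to conclude that singletons inside it are open; you instead construct an explicit basic open set $\cU(A)$ with $A=\{a,b_1,\dots,b_n\}$ that is equal to $\{\PP\}$, using the pairwise incomparability of distinct maximal ideals to choose the $b_i$. Your version is a bit longer but more self-contained, avoiding the reliance on the Hausdorff corollary, and has the virtue of exhibiting the basic open set directly rather than deducing its existence; the paper's version is shorter at the cost of quoting an earlier topological result. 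Both are sound, and the small point you flag at the end (that $\PP$ contains a nonzero element because $\oo$ is not a field) is a legitimate though minor hypothesis-check that the paper's proof passes over silently.
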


\begin{proof}
First, suppose that 
$\OO$ has the  
finite character property. 
Fix any $\PP \in \mSpec \OO$, 
and take any non-zero element $x \in \PP$. 
Since $\OO$ has the  
finite character property, 
the open neighborhood $\cU (\{ x \})$
is a finite set. 
This implies that $\mSpec \OO$ is 
discrete since $\mSpec \OO$ is Hausdorff. 
Next, we assume that $\mSpec \OO$ is discrete. 
Take any non-zero element $x \in \OO$. 
Since $\cU (\{ x \})$ is compact 
and discrete, we deduce that
$\cU (\{ x \})$ is  finite. 
Hence $\OO$ has the  
finite character property. 
\end{proof}

\subsection{Valuations on the ring $\OO$}\label{ssval}
Let $\PP \in \mSpec \OO$ be any element. 
Here, 
let us introduce an additive valuation 
$v_\PP \colon L^\times \longrightarrow \Q$
corresponding to $\PP$.
Take any $M \in \IF (L/K)$. 
We put 
$\PP_M:=\PP \cap M$, and 
we denote by 
${\oo}_{\PP,M}$ 
the localization of of $\oo_M$ at $\PP_M$. 
Since ${\oo}_{\PP,M}$ is a DVR, we have a unique additive 
valuation 
$v_{\PP,M} \colon M^\times  \longrightarrow \Q$
normalized by 
\begin{equation}\label{eqnormalization}
v_{\PP,M}(K^\times) = \Z.
\end{equation} 
For convenience, we put $v_{\PP,M}(0)= +\infty$. 
By the normalization (\ref{eqnormalization}), 
the following obviously holds.

\begin{lem}\label{lemgluing}
For any $M_1,M_2 \in \IF (L/K)$
satisfying $M_1 \subseteq M_2$ satisfying 
$M_1 \subseteq M_2$, 
the restriction of $v_{\PP,M_2}$
to $M_1$ coincides with $v_{\PP,M_2}$. 
\end{lem}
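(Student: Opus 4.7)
The plan is to prove the lemma (where of course the statement should read ``coincides with $v_{\PP,M_1}$'' on the right-hand side) in three moves: establish that the restricted valuation and $v_{\PP,M_1}$ have the same valuation ring on $M_1$, deduce from this that they differ by a positive rational scalar, and then fix that scalar using the normalization on $K^\times$.

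First, I would consider the restriction $w := v_{\PP,M_2}|_{M_1^\times}$. This is a $\Q$-valued additive valuation on $M_1^\times$, and its valuation ring is $\oo_{\PP,M_2} \cap M_1$. This intersection is a valuation ring of $M_1$ that contains $\oo_{\PP,M_1}$; since $\oo_{\PP,M_1}$ is itself a DVR, hence a maximal proper subring of $M_1$ among rank-one valuation rings whose maximal ideal contracts to $\PP_{M_1}$, the two rings coincide. Consequently $w$ and $v_{\PP,M_1}$ share the valuation ring $\oo_{\PP,M_1}$.

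Any two $\Q$-valued additive valuations on the field $M_1$ with the same valuation ring differ by a positive rational scalar, so $w = c \cdot v_{\PP,M_1}$ for some $c \in \Q_{>0}$. To pin down $c$, I would restrict once more, to $K^\times$. Putting $\pp_K := \PP \cap \oo$, the normalization (\ref{eqnormalization}) says that both $v_{\PP,M_1}|_{K^\times}$ and $v_{\PP,M_2}|_{K^\times}$ are surjective homomorphisms onto $\Z$ whose valuation ring on $K$ is $\oo_{\pp_K}$. The $\pp_K$-adic valuation on $K$ is the unique such homomorphism, so the two restrictions to $K^\times$ are literally equal, forcing $c = 1$.

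The only slightly delicate point I anticipate is the identification $\oo_{\PP,M_2} \cap M_1 = \oo_{\PP,M_1}$: one can argue it directly via domination of valuation rings, or by noting that any element of $M_1$ in $\oo_{\PP,M_2}$ is integral over the integrally closed ring $\oo_{\PP,M_1}$, hence already lies there. Beyond this, every step is essentially forced by the normalization condition, so I do not expect any further technical obstacle.
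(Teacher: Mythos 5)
Your proof is correct, and it fills in precisely the reasoning the paper leaves implicit: the paper offers no argument at all, saying the lemma ``obviously holds'' by the normalization (\ref{eqnormalization}), while you spell out the three natural steps (the restricted valuation and $v_{\PP,M_1}$ share the valuation ring $\oo_{\PP,M_1}$, hence differ by a positive rational scalar, and the normalization on $K^\times$ forces that scalar to be $1$). The only small stylistic quibble is that your ``maximal proper subring among rank-one valuation rings whose maximal ideal contracts to $\PP_{M_1}$'' phrasing is more convoluted than needed; it suffices that a DVR admits no proper overring in its fraction field other than the field itself, together with the observation that $\oo_{\PP,M_2}\cap M_1\neq M_1$.
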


\begin{defn}\label{defval}
We define the function 
$v_\PP \colon L ^\times \longrightarrow \Q$
by sending $x \in L^\times $ to 
$v_{\PP,M} (x)$ if $x$ belongs to 
a field $M \in \IF (L/K)$. 
By Lemma \ref{lemgluing}, 
the value $v_\PP (x)$ does not depend on 
the choice of the field $M$ containing $x$.
\end{defn}

\begin{lem}\label{lemdiscb=ref}
The ring $\OO$ has the  
finite character property if and only if 
the abelian monoid $\ccSr(\OO, \wV)$ coincides with 
$\ccSb(\OO, \wV)$. 
\end{lem}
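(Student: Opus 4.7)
My plan is to reduce the statement to topology via Lemma \ref{lemdisc}, which says that $\OO$ has the finite character property if and only if $\mSpec \OO$ is discrete. So it suffices to prove: $\ccSr(\OO,\wV)=\ccSb(\OO,\wV)$ if and only if $\mSpec \OO$ is discrete.

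For the forward implication, assuming $\mSpec \OO$ is discrete, every subset of $\mSpec \OO$ is open, hence every function $\mSpec \OO \to \wR$ is upper $\wR$-semicontinuous. Thus for any $\wf \in \ccSb(\OO,\wV)$, the composite $\iota \circ \wf$ is automatically upper $\wR$-semicontinuous. Since $\iota$ fixes $(0,0)$ it preserves the support, and since $\iota$ merely negates first coordinates it carries a function bounded between $(\alpha_1,s_1)$ and $(\alpha_2,s_2)$ to one bounded between $(-\alpha_2,0)$ and $(-\alpha_1,1)$. Also $\iota(\wV_\PP)=\wV_\PP$ by construction. Hence $\iota \circ \wf \in \ccSb(\OO,\wV)$, i.e., $\wf \in \ccSr(\OO,\wV)$.

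The substantive direction is the converse. Assuming $\mSpec \OO$ is not discrete, I will exhibit an explicit element of $\ccSb(\OO,\wV) \setminus \ccSr(\OO,\wV)$. Pick any non-isolated point $\PP_0 \in \mSpec \OO$, and define
\[
\wf(\PP):=\begin{cases} (1,0) & (\PP=\PP_0), \\ (0,0) & (\PP\ne \PP_0).\end{cases}
\]
To see that $\wf \in \ccSb(\OO,\wV)$: the normalization $v_{\PP_0}(K^\times)=\Z$ forces $1 \in v_{\PP_0}(L^\times)$, so $(1,0) \in \val_{\PP_0} \subseteq \wV_{\PP_0}$; the support $\{\PP_0\}$ is compact; and $\wf$ is bounded between $(0,0)$ and $(1,0)$. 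For upper $\wR$-semicontinuity I compute $\wf^{-1}(\wR_{<a})$: it equals $\emptyset$ for $a \le (0,0)$, the whole space for $a>(1,0)$, and $\mSpec \OO \setminus \{\PP_0\}$ otherwise, which is open because $\{\PP_0\}$ is closed in the Hausdorff space $\mSpec \OO$ (Corollary \ref{corhlc}).

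To finish, I will show $\iota \circ \wf$ is \emph{not} upper $\wR$-semicontinuous. At $a=(0,0)$ the preimage $(\iota \circ \wf)^{-1}(\wR_{<(0,0)})$ equals $\{\PP_0\}$, since $(\iota \circ \wf)(\PP_0)=(-1,0) < (0,0)$ while $(\iota \circ \wf)(\PP)=(0,0)$ elsewhere. But $\{\PP_0\}$ fails to be open precisely because $\PP_0$ is not isolated, so $\iota \circ \wf \notin \ccSb(\OO,\wV)$ and therefore $\wf \notin \ccSr(\OO,\wV)$. The main conceptual point is recognizing that the second coordinate in $\wR$ is what separates $\ccSr$ from $\ccSb$: the asymmetry in upper versus lower semicontinuity at a non-isolated point is detected by a single $\val_\PP$-valued bump, which is why the integer normalization of $v_{\PP_0}$ suffices and no appeal to the dichotomy $V_{\PP_0}=\R$ versus $V_{\PP_0}=v_{\PP_0}(L^\times)$ is needed.
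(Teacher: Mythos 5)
Your proof is correct and takes essentially the same route as the paper: both directions reduce to discreteness of $\mSpec\OO$ via Lemma~\ref{lemdisc}, and the converse uses the same counterexample function taking value $(1,0)$ at a chosen limit point and $(0,0)$ elsewhere. The paper states the converse nearly without comment; you have simply filled in the verification that $\wf\in\ccSb(\OO,\wV)$ and that $\iota\circ\wf$ fails upper $\wR$-semicontinuity, both of which are carried out correctly.
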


\begin{proof}
Suppose that the ring $\OO$ has the  
finite character property. 
Then, by Lemma \ref{lemdisc},
the topological space 
$\mSpec \OO$ is discrete. So,  
any $\wR$-value function on 
$\mSpec \OO$ is upper $\wR$-semicontinuous. 
This implies that 
$\ccSb(\OO, \wV)=\ccSr(\OO, \wV)$. 

Let us assume that 
$\OO$ does not have the  
finite character property. 
Then, there exists an element 
$\PP \in \mSpec \OO$ 
which is a limit point.
We define a function 
$\wf \colon \mSpec \OO 
\longrightarrow \wR$ by 
$\wf(\PP')=(0,0)$ (resp.\ $\wf(\PP')=(1,0)$)
if $\PP' \ne \PP$ (resp.\ $\PP' = \PP$).  
Then, we have 
$\wf \in \ccSb(\OO, \wV)$, 
and $\wf \notin \ccSr(\OO, \wV)$. 
\end{proof}

\section{Proof of Theorem \ref{thmmain1}}\label{secpf}

In this section, we prove 
Theorem \ref{thmmain1}. 
In \S \ref{sswF}, 
we construct the map $\wF$.
In \S \ref{ssfracideal}, 
we prove the assertion (i)
of Theorem \ref{thmmain1}, 
that is, the map is an isomorphism. 
In \S \ref{ssinvideal},  
we show the assertion (iii). 
In other words, in \S \ref{ssinvideal}, 
we determine the image of 
invertible fractional ideals by the map $\wF$.
In \S \ref{ssreg}, we study 
the image of regular fractional ideals, 
and complete the proof of 
Theorem \ref{thmmain1}.

It is convenient to introduce the following notation. 

\begin{defn}
For each $\PP \in \mSpec \OO$ and each $M \in \IF (L/K)$, 
we define a maximal ideal $\PP_M$ of $\oo_M$ by 
$\PP_M:=\PP \cap M$, and 
fix a finite subset set $A_\PP(M)$ of the ring $\oo_M$ 
which generates $\PP_M$.  
\end{defn}

\subsection{Construction of the map $\wF$}\label{sswF}

Here, let us construct the map 
$\wF$ in Theorem \ref{thmmain1}. 
Recall that in \S \ref{ssmainthm}, 
for a given fractional ideal 
$I \in \cF(\OO)$, 
we have defined 
a function $\wf \colon \mSpec \OO 
\longrightarrow \wR$ by
\[
\wf_{I} (\PP):= \begin{cases}
(f_{I}(\PP), 0) & (\text{if $\PP \in \Max (I)$}), \\
(f_{I}(\PP), 1) & (\text{if $\PP \notin \Max (I)$}),
\end{cases}
\]
where  
$f_{I}\colon \mSpec \OO \longrightarrow \R$ 
is a function given by 
\(
f_{I}(\PP):= \inf
\{ v_{\PP}(x) \mathrel{\vert} x \in I \}
\)
for each $\PP \in \mSpec \OO$, and 
$\Max (I)$ is  a subset of $\mSpec \OO$
consisting of all the primes $\PP$ 
such that the function $v_{\PP}\vert_{I}$ 
has a minimum value. 
In \S \ref{ssmainthm}, we have also defined 
$\ccSb (\OO;\wV)$ to be the set 
consisting of all the bounded, compactly supported and 
upper $\wR$-semicontinuous functions 
$\wg \colon \mSpec \OO \longrightarrow \wR$
satisfying $\wg (\PP) \in \wV_\PP$. 
Note that $\ccSb (\OO;\wV)$ is an abelian monoid. 
Here, let us verify that 
we can define the homomorphism 
$\wF \colon \cF(\OO) \longrightarrow \ccSb( \OO ; \wV)$ 
sending each  
$I \in \cF (\OO)$ to $\wf_I$. 
In order to do this, first, we prove the following lemma.

\begin{lem}\label{lemfI0}
Let $I$ and $J$ be fractional ideals of $\OO$, 
and $\pp$ any maximal ideal of $\oo$. 
Then, the following assertions hold.
\begin{enumerate}[{\rm (i)}]
\item We have $f_{IJ}=f_{I}+f_{J}$.
\item We have $\Max (IJ)=\Max(I) \cap \Max (J)$.
\end{enumerate}
\end{lem}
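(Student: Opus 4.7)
The plan is to reduce both assertions to the ultrametric property of the additive valuation $v_\PP$ applied to elements of $I$, $J$, and $IJ$, after fixing an arbitrary prime $\PP \in \mSpec \OO$.

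For (i), I would prove the two inequalities separately. The direction $f_{IJ}(\PP) \le f_I(\PP) + f_J(\PP)$ is immediate: for every $x \in I$ and $y \in J$ we have $xy \in IJ$, hence
\[
f_{IJ}(\PP) \;\le\; v_\PP(xy) \;=\; v_\PP(x)+v_\PP(y),
\]
and taking the infimum over $x \in I$ and then over $y \in J$ yields the bound. For the reverse inequality, any element $z \in IJ$ has the form $z = \sum_{k=1}^n x_k y_k$ with $x_k \in I$ and $y_k \in J$, so by the ultrametric inequality
\[
v_\PP(z) \;\ge\; \min_{1 \le k \le n}\bigl( v_\PP(x_k) + v_\PP(y_k) \bigr) \;\ge\; f_I(\PP) + f_J(\PP).
\]
Taking the infimum over $z \in IJ$ gives $f_{IJ}(\PP) \ge f_I(\PP) + f_J(\PP)$.

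For (ii), the inclusion $\Max(I) \cap \Max(J) \subseteq \Max(IJ)$ is easy: choose $x \in I$ with $v_\PP(x) = f_I(\PP)$ and $y \in J$ with $v_\PP(y) = f_J(\PP)$, then $xy \in IJ$ realizes $f_{IJ}(\PP) = f_I(\PP) + f_J(\PP)$ by (i). For the reverse inclusion, suppose $\PP \in \Max(IJ)$ and pick $z \in IJ$ with $v_\PP(z) = f_{IJ}(\PP) = f_I(\PP) + f_J(\PP)$. Writing $z = \sum_k x_k y_k$ and using the ultrametric bound above, the minimum $\min_k(v_\PP(x_k)+v_\PP(y_k))$ must be at most $f_I(\PP)+f_J(\PP)$, while each summand is at least that value. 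Hence there exists an index $k_0$ with $v_\PP(x_{k_0}) + v_\PP(y_{k_0}) = f_I(\PP) + f_J(\PP)$; combined with the individual bounds $v_\PP(x_{k_0}) \ge f_I(\PP)$ and $v_\PP(y_{k_0}) \ge f_J(\PP)$, both inequalities must be equalities. This exhibits $\PP \in \Max(I) \cap \Max(J)$.

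There is no serious obstacle here; the argument is purely valuation-theoretic and uses only the non-archimedean triangle inequality together with the multiplicativity $v_\PP(xy)=v_\PP(x)+v_\PP(y)$. The only point requiring a little care is to notice that once the infimum is achieved on $IJ$, the ultrametric inequality forces the minimum across the summands of a representative $z = \sum x_k y_k$ to be achieved, after which the bound $v_\PP(x_k) \ge f_I(\PP)$ forces the individual minima for $I$ and $J$ to be realized simultaneously by the same index.
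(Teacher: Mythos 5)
Your proof is correct and follows essentially the same route as the paper: part (i) via the two infimum inequalities (product gives $\le$, ultrametric on sums gives $\ge$), and part (ii) by producing a realizing product $xy$ for one inclusion and using the ultrametric bound on a decomposition $z=\sum x_k y_k$ for the other. The only cosmetic difference is that the paper proves the reverse inclusion of (ii) in contrapositive form (if $\PP\notin\Max(I)\cap\Max(J)$ then every $z\in IJ$ has $v_\PP(z)>f_{IJ}(\PP)$), whereas you argue directly by extracting an index $k_0$ where equality is forced; these are logically the same observation.
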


\begin{proof}
Let us show the assertion (i). 
Fix $\PP \in \mSpec \OO$. 
Take any $\varepsilon \in \R_{>0}$.
Then, there exist elements 
$x \in I$ and $y \in J$
such that $v_\PP (x) \le \wf_I(\PP) + \varepsilon/2$, 
and $v_\PP (y) \le \wf_I(\PP) + \varepsilon/2$. 
So, we have 
\(
f_{IJ}(\PP) \le v_\PP(xy)= v_\PP(x) + v_\PP(y) 
\le \wf_I(\PP) + \wf_J(\PP) + \varepsilon 
\). 
This implies that $f_{IJ} \le f_I + f_J$. 
Take any $z \in IJ$, and  
finitely many elements 
$x_1, \dots, x_n \in I$ and 
$y_1, \dots, y_n \in I$
satisfying $z=\sum_{i=1}^n x_iy_i$. 
Then, we have 
\[
v_\PP (z)  \ge \min_{1 \le i \le n} v_{\PP}(x_iy_i)
\ge \min_{1 \le i \le n} v_{\PP}(x_i)
+\min_{1 \le i \le n} v_{\PP}(x_i) 
\ge f_{I}(\PP) + f_{J}(\PP).
\]
This implies that $f_{IJ}(\PP) \ge f_{I}(\PP) + f_{J}(\PP)$. 
Hence we obtain the assertion (i). 

Let us show the assertion (ii).
First, we take an element $\PP \in 
\Max (I) \cap \Max (J)$.  
Then, there exist non-zero elements $x \in I$
and $y \in J$ such that 
$f_{I}(\PP) = v_{\PP} (x)$
and $f_{J}(\PP) = v_{\PP} (y)$.
By the assertion (i),  
the element $xy \in IJ$ satisfies that
$f_{IJ}(\PP) = v_{\PP} (\PP (xy))$.
So, we have  
$\Max(I) \cap \Max(J) \subseteq 
\Max (IJ)$.
Next, 
we take any 
$\PP \in \mSpec \OO$ satisfying 
$\PP \notin \Max (I)$ or 
$\PP \notin \Max (J)$.
Let $z \in IJ$ be any element. 
Take finitely many elements $x_1, \dots, x_n \in I$ and 
$y_1, \dots, y_n \in J$ satisfying 
$z=\sum_{i=1}^n x_iy_i$. 
Since $\PP$ does not belong to 
$\Max (I) \cup \Max (J)$, we have 
\[
v_\PP ( z) 
\ge \min_{1 \le i \le n}
v_{\PP} (x_i) + 
\min_{1 \le i \le n} v_{\PP} (y_i ) 
 > f_{I}(\PP)+f_{J}(\PP)  
= f_{IJ}(\PP).  
\]
Hence no element $z \in IJ$
satisfies the equality $v_\PP (z) 
= f_{IJ}(\PP)$. 
This implies that $\PP \notin \Max (IJ) $.
So, we obtain 
$\Max (I) \cap \Max (J) \supseteq 
\Max (IJ)$.
\end{proof}

By Lemma \ref{lemfI0}, 
we obtain the following corollary. 

\begin{cor}\label{lemfIp}\label{lemidem}
For any $I,J \in \cF(\OO)$,  
we have $\wf_{IJ}=\wf_{I}+\wf_{J}$.
\end{cor}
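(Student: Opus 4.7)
The plan is to verify the equality pointwise on $\mSpec \OO$, comparing both sides of $\wf_{IJ} = \wf_I + \wf_J$ coordinatewise in $\wR = \R \times \{0,1\}$. Fix an arbitrary $\PP \in \mSpec \OO$, and write $\wf_I(\PP) = (f_I(\PP), s_I)$, $\wf_J(\PP) = (f_J(\PP), s_J)$, $\wf_{IJ}(\PP) = (f_{IJ}(\PP), s_{IJ})$, where $s_I \in \{0,1\}$ is $0$ precisely when $\PP \in \Max(I)$, and similarly for $s_J$ and $s_{IJ}$.

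First, by Lemma \ref{lemfI0}(i), the $\R$-component of $\wf_I(\PP) + \wf_J(\PP)$ is $f_I(\PP) + f_J(\PP) = f_{IJ}(\PP)$, which matches the $\R$-component of $\wf_{IJ}(\PP)$. For the $\{0,1\}$-component, recall that by the definition of the addition on $\wR$ given in Definition \ref{deftotordabmonoid}, the second coordinate of $\wf_I(\PP) + \wf_J(\PP)$ is $\max\{s_I, s_J\}$. We must therefore show $s_{IJ} = \max\{s_I, s_J\}$.

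This is a direct application of Lemma \ref{lemfI0}(ii). Indeed, $s_{IJ} = 0$ iff $\PP \in \Max(IJ)$; by Lemma \ref{lemfI0}(ii), this happens iff $\PP \in \Max(I)$ and $\PP \in \Max(J)$, that is, iff $s_I = 0$ and $s_J = 0$, which is equivalent to $\max\{s_I, s_J\} = 0$. Taking contrapositives, $s_{IJ} = 1$ iff $\max\{s_I, s_J\} = 1$, so the second coordinates agree in every case.

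Since $\PP$ was arbitrary, this establishes $\wf_{IJ}(\PP) = \wf_I(\PP) + \wf_J(\PP)$ for all $\PP \in \mSpec \OO$, proving the corollary. There is no substantive obstacle: once Lemma \ref{lemfI0} is in hand, the corollary is a mechanical unwinding of the definitions of $\wf_I$ and of the monoid operation on $\wR$. (Later uses of the corollary will need to know in addition that the right-hand side actually lies in $\ccSb(\OO;\wV)$, but that is addressed separately by Lemma \ref{lemfIsc}, and is not needed for the equality asserted here.)
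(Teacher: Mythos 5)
Your proof is correct and follows essentially the same route as the paper, which derives the corollary directly from Lemma \ref{lemfI0} without spelling out the pointwise bookkeeping: part (i) of that lemma handles the $\R$-component of $\wf_{IJ}$ and part (ii) handles the $\{0,1\}$-component via the translation $\Max(IJ) = \Max(I) \cap \Max(J) \Leftrightarrow s_{IJ} = \max\{s_I, s_J\}$. You have simply made explicit the elementary unwinding of the definitions of $\wf_I$ and of the monoid law on $\wR$ that the paper leaves to the reader.
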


Next, we prove the following lemma.

\begin{lem}\label{lemfIsc}
We have $\wf_{I} \in \ccSb (\OO; \wV)$. 
\end{lem}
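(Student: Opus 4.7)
The plan is to verify the four conditions packed into membership in $\ccSb (\OO;\wV)$: (1) that $\wf_I(\PP) \in \wV_\PP$ for every $\PP$, (2) upper $\wR$-semicontinuity, (3) compact support, and (4) boundedness. The essential input will be the normalization $v_{\PP'}(K^\times) = \Z$, which by the remark preceding this subsection shows that whenever $\PP' \cap K(x) = \PP \cap K(x)$ one has $v_{\PP'}(x) = v_\PP(x)$. This is the key identity that converts algebraic coincidences on a finite subextension into topological neighbourhoods in $\mSpec \OO$.

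For (1), if $\PP \in \Max(I)$ then $f_I(\PP) = v_\PP(x)$ for some $x \in I$, so $(f_I(\PP),0) \in \val_\PP \subseteq \wV_\PP$. If $\PP \notin \Max(I)$, I would argue by contradiction that $v_\PP(L^\times)$ must be dense in $\R$: were it discrete, the bounded-below set $\{v_\PP(x) : x \in I \setminus \{0\}\}$ (bounded below because $dI \subseteq \OO$ implies $v_\PP(x) \ge -v_\PP(d)$) would attain its infimum, forcing $\PP \in \Max(I)$. Hence $V_\PP = \R$ and $(f_I(\PP),1) \in \wV_\PP$.

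For (2), I would apply Lemma \ref{lemusccriterion}. In case (A) with $\PP \notin \Max(I)$ and any $\alpha > f_I(\PP)$, pick $x \in I$ with $v_\PP(x) < \alpha$ and take $W = \cU_\OO(A_\PP(K(x)))$; the key identity gives $f_I(\PP') \le v_{\PP'}(x) = v_\PP(x) < \alpha$ on $W$. In case (B) with $\PP \in \Max(I)$, pick $x \in I$ realizing the minimum and take the same neighbourhood: on $W$ one gets $f_I(\PP') \le v_\PP(x) = f_I(\PP)$, and equality forces $v_{\PP'}(x) = f_I(\PP')$, putting $\PP'$ into $\Max(I)$.

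For (3) and (4), the trick is to fix non-zero $b \in I$ and $d \in L^\times$ with $dI \subseteq \OO$, writing $d = u/v$ with $u,v \in \OO \setminus \{0\}$. I expect to show that the support of $\wf_I$ is contained in $\cU_\OO(\{db\}) \cup \cU_\OO(\{u\}) \cup \cU_\OO(\{v\})$: outside $\cU_\OO(\{db\})$ we have $v_\PP(db) = 0$, which combined with $v_\PP(x) \ge -v_\PP(d)$ for $x \in I$ forces $f_I(\PP) = -v_\PP(d)$ attained by $b$, so $\wf_I(\PP) = (-v_\PP(d),0)$, which is $(0,0)$ as soon as $v_\PP(d) = 0$, i.e.\ outside $\cU_\OO(\{u\}) \cup \cU_\OO(\{v\})$. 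A finite union of sets of the form $\cU_\OO(\{\cdot\})$ is compact by Corollary \ref{corcUAcpt}. For boundedness, the inequalities $-v_\PP(d) \le f_I(\PP) \le v_\PP(b)$ reduce matters to noting that $v_\PP(d)$ and $v_\PP(b)$ depend only on $\PP \cap K(d,b)$, which ranges over a finite set since $K(d,b)/K$ is finite and $\oo_{K(d,b)}$ is Dedekind; hence $f_I$ takes finitely many values on $\mSpec \OO \setminus \text{supp}(\wf_I)$'s complement, er, is bounded. The main obstacle, or at least the step requiring the most care, will be carving out the compact support cleanly from the arbitrary choice of the auxiliary elements $b$ and $d$; the upper $\wR$-semicontinuity itself is quick once one isolates the neighbourhood trick $\cU_\OO(A_\PP(K(x)))$.
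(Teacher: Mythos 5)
Your proof is correct and follows the same strategy as the paper's: the neighbourhood $\cU(A_\PP(K(x)))$ built from a near-minimizing $x \in I$, together with the normalization $v_{\PP'}(K^\times) = \Z$, drives the $\wR$-semicontinuity argument via Lemma \ref{lemusccriterion}, while auxiliary elements $b \in I$ and $d$ with $dI \subseteq \OO$ control boundedness and compact support. Two minor points of departure, both improvements: your discrete-value-group argument explicitly verifies $\wf_I(\PP) \in \wV_\PP$ when $\PP \notin \Max(I)$, a membership condition the paper's proof leaves implicit (its statement "It suffices to show \dots bounded, upper $\wR$-semicontinuous, and compactly supported" omits this check), and your compact-support bookkeeping via $\cU(\{db\}) \cup \cU(\{u\}) \cup \cU(\{v\})$ is a more elementary rendition of the paper's route through the finite subset $\Sigma \subseteq \mSpec \oo_{K(x,d)}$.
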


\begin{proof}
It suffices to show that 
the function $\wf_{I} \colon 
\mSpec \OO \longrightarrow \wR$ 
is bounded, upper $\wR$-semicontinuous, 
and compactly supported.

First, we prove that $\wf$ is compactly supported. 
Take a non-zero element $x \in I$ and 
an element $d \in L^\times$ such that $d I \subseteq \OO$. 
We put $\oo':=\oo_{K(x,d)}=\OO \cap \OO$.
Note that $K(x,y)/K$ is a finite extension, and 
$\oo'$ is a Dedekind domain. 
We define a subset $\Sigma$  of
$\mSpec \oo'$ by
\[
\Sigma := \{ \pp \in \mSpec \oo'
\mathrel{\vert} x \notin {\oo'}^\times_{\pp} \} 
\cup 
\{ \pp \in \mSpec \oo'
\mathrel{\vert} d \notin {\oo'}^\times_{\pp} \}. 
\]
Then, the $\Sigma$ is finite. 
Since $\oo'$ is Noetherian, 
for each $\pp \in \Sigma$, 
we can take a finite subset $A_\pp$ of $\oo'$
generating $\pp$. 
Since the set $\coprod_{\pp \in \Sigma}A_\pp$ is finite, 
Corollary \ref{corcUAcpt} implies that 
the set $U:= \bigcup_{\pp \in \Sigma} \cU(A_\pp)$ 
is compact. 
If $\qq \in \mSpec \OO$ is not contained in 
the compact set $U$, 
then we have 
$f_{I}(\PP)=f_{dI}(\PP)=0$ and $\PP \in \Max (I)
\cap \Max (dI)$ since $\PP \cap \oo'$ 
is prime to $I \cap \oo'$ and $d\oo'$.
This implies that $\wf$ is compactly supported. 

Note that we can also deduce 
that the image of $f_{d\OO}$ is a finite set 
as follows 
For any $\PP \in U$, 
the function $f_{d\OO}$ 
is constant on the open neighborhood $\cU(\PP \cap \oo')$ 
of $\PP$. Indeed, for each $\PP' \in \cU(\oo')$, 
we have $\PP' \cap \oo'=\PP$, and 
$f_{d\OO}(\PP')=v_{\PP'}(d)=v_{\PP}(d)$. 
So, the function $f_{d\OO}$ is locally constant.
Since $f_{d\OO}$ is compactly supported, 
the image of $f_{d\OO}$ is a finite set.

Next, let us prove that $\wf_{I}$ is bounded below.  
Take an element $d \in L^\times$
satisfying $d I \subseteq \OO$.
Then, the function $f_{d I}$ is non-negative, 
in particular, bounded below. 
By the assertion (i), we have 
$f_{d I}= f_{I}+ f_{d \OO}$.  
Note that $f_{d I}= f_{I}+ f_{d \OO}$ is non-negative, 
in particular bounded below.  
So, the function $f_{I}= f_{dI} - f_{d \OO}$ 
is bounded since the image of the function
$f_{d\OO}$ is finite. 
Hence $\wf_{I}$ is also bounded.

Let us show that 
$\wf_{I}$ is upper $\wR$-semicontinuous.
Fix any element $\PP \in \mSpec \OO$, 
and take any $\epsilon \in \R_{>0}$. 
By the definition of $f_{I}$,
there exists an element $x \in I$
satisfying 
$f_{I}(\PP) \le 
v_{\pp}(x) <  f_{I}(\PP) +\varepsilon$. 
Put $M:= K(x)$. 
Put $A:=A_\PP (M)$. 
Then, for any $\PP' \in \cU(A)$, we have 
\begin{equation}\label{eqwfIp}
f_{I}(\PP') \le 
v_{\PP'} (x) = v_{\PP} ( x)
< f_{I}(\PP) +\varepsilon.
\end{equation}
So, the $\R$-valued function $f_{I}$ is upper semicontinuous.
By Lemma \ref{lemusccriterion}, 
we deduce that $\wf_{I}$ is upper $\wR$-semicontinuous
at $\PP$ if $\PP \notin \Max (I)$. 
Suppose that $\PP \in \Max (I)$.
Then, we can take the above $x \in I$ to satisfy that
$f_{I}(\PP) = 
v_{\PP}(x)$. 
Similarly to above, we again put 
$M:=K(x)$, and $A:=A_\PP(M)$. 
Take any $\PP' \in \cU(A)$. 
As we have seen above, 
the inequality (\ref{eqwfIp}) holds.
Moreover, if $f_{I}(\PP') 
=f_I (\PP) $, 
then it holds that $f_{I}(\PP') \in \Max_\pp (I)$
since the inequality (\ref{eqwfIp}) implies that
$f_{I}(\PP') 
 = v_\PP (x)=v_{\PP'} (x)$. 
So, by Lemma \ref{lemusccriterion}, 
we deduce that $\wf_{I}$ is also upper $\wR$-semicontinuous
at $\PP$ if 
$\PP \in \Max (I)$. 
Hence $\wf$ is upper $\wR$-semicontinuous everywhere. 

Since $\wf$ is  upper $\wR$-semicontinuous, 
and since $\wf$ is compactly supported, 
Lemma \ref{lembdd} implies that $\wf$ is bounded above.
This completes the proof of Lemma \ref{lemfIsc}.
\end{proof}

By Corollary \ref{lemfIp} and 
Lemma \ref{lemfIsc}, we obtain the following. 

\begin{prop}\label{propwfdef}
There exists a monoid homomorphism 
\[
\wF \colon \cF(\OO) \longrightarrow 
\ccSb( \OO ; \wV);\ 
I \longmapsto \wF(I):=\wf_I.
\] 
\end{prop}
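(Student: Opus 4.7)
The plan is to assemble the proposition directly from the preceding lemmas, since the proposition is essentially a packaging statement. First, I would invoke Lemma \ref{lemfIsc} to conclude that for every $I \in \cF(\OO)$, the function $\wf_I$ belongs to $\ccSb(\OO; \wV)$, so that the assignment $I \mapsto \wf_I$ defines a well-defined set-theoretic map $\wF \colon \cF(\OO) \to \ccSb(\OO; \wV)$.

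Next, to check that $\wF$ is a monoid homomorphism, I would verify the two required properties. The multiplicativity $\wF(IJ) = \wF(I) + \wF(J)$ is an immediate restatement of Corollary \ref{lemfIp}, which itself was derived from the identities $f_{IJ} = f_I + f_J$ and $\Max(IJ) = \Max(I) \cap \Max(J)$ of Lemma \ref{lemfI0}. For preservation of the identity, I would observe that $\wF(\OO)$ is the constant function $(0,0)$: for any $\PP \in \mSpec \OO$, since $1 \in \OO$ and every element of $\OO$ has non-negative $v_\PP$-valuation, the infimum $f_\OO(\PP) = \inf\{v_\PP(x) \mid x \in \OO\}$ equals $0$ and is attained by $1$, so $\PP \in \Max(\OO)$ and $\wf_\OO(\PP) = (0,0)$.

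Since all substantive work, namely the computation of $f_{IJ}$ and $\Max(IJ)$ in Lemma \ref{lemfI0}, together with the verification of boundedness, compact support, upper $\wR$-semicontinuity, and the pointwise constraint $\wf_I(\PP) \in \wV_\PP$ in Lemma \ref{lemfIsc}, has already been carried out, no real obstacle remains. The proposition is a one-line synthesis: by Lemma \ref{lemfIsc}, $\wf_I \in \ccSb(\OO; \wV)$ for each $I$; by Corollary \ref{lemfIp}, the assignment is additive; and the identity $\OO$ maps to the additive identity by the computation above.
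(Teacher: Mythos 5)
Your proposal is correct and takes essentially the same route as the paper, which derives the proposition immediately from Corollary \ref{lemfIp} (additivity) and Lemma \ref{lemfIsc} (membership in $\ccSb(\OO;\wV)$). The explicit verification that $\wf_{\OO}$ is the additive identity $(0,0)$ is a small detail the paper leaves implicit, and your argument for it is correct.
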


\begin{rem}
Suppose that $L/K$ is Galois, 
and take any $\PP \in \mSpec \OO$. 
For any $\sigma \in G$ and $\PP \in \mSpec \OO$, 
we have $v_{\sigma^{-1} (\PP)}=v_\PP \circ \sigma$. 
So, by definition, the map $\wF$ 
preserves the action of $G:=\Gal(L/K)$. 
Namely, for any $\sigma \in G$, 
we have $\wf_{\sigma(I)}=\wf_I \circ \rho_{\sigma^{-1}}$, 
where $\rho_{\sigma^{-1}} \colon \mSpec \OO \longrightarrow 
\mSpec \OO$ is a map defined by 
$\rho_{\sigma^{-1}} (\PP)= \sigma^{-1}(\PP)$. 
\end{rem}

\subsection{Parameterization of fractional ideals}\label{ssfracideal}
Here, let us show that
the map $\wF$ constructed in the previous subsection
is an isomorphism, in particular 
that $\wF$ is injective and surjective.
In order to show this, let us set some notation, 
and prove some preliminary results.

Recall that we have defined $\wR:=\R \times \{ 0,1 \}$. 
We denote by 
$\pi_0 \colon \wR \longrightarrow \R$ and 
$\pi_1 \colon \wR \longrightarrow \{ 0,1 \}$
the projections.

\begin{defn}
Let $\wg \in \ccSb( \OO ; \wV)$ 
be any element. 
For each intermediate field $M$ of $L/K$, 
we define 
\[
I_M(\wg):=
\bigcap_{\PP \in \mSpec \OO}
\{ x \in M \mathrel{\vert} v_{\PP} (x) 
\>'_{\pi_1(\wg(\PP))}  
\pi_0 (\wg (\PP)) \}, 
\]
where the symbol $\displaystyle\>'_{s}$ denotes 
the symbol $\ge$ (resp.\ the symbol $>$) 
if $s=0$ 
(resp.\ $s=1$). 
\end{defn}

\begin{lem}\label{lemdedekind}
Let $\wg \in \ccSb( \OO ; \wV)$ 
be any element. 
We define a function $g:=\pi_0 \circ \wg$, 
and a subset $\CC :=(\pi_1 \circ \wg)^{-1}(1) 
\subseteq \mSpec \OO$. 
Take any $M \in \IF (L/K)$. 
Then, the following hold. 
\begin{enumerate}[{\rm (i)}]
\item The set $I_M(\wg)$ is 
a fractional ideal of $\oo_M$. 
\item Take any $\PP \in \mSpec \OO$, and put 
$A_\PP:=A_\PP(M)$. 
We denote by $\oo_{M,\PP}$ the localization of $\oo_{M}$ at $\PP_M$.
We define $e=e_M(\PP_M):=(v_\PP (M^\times): \Z)$.  
A rational number $\alpha=
\alpha_M(\PP_M) \in \Q$ is defined by
\[
\alpha := 
\begin{cases}
\displaystyle\max_{\PP' \in \cU (A_\PP)}
\left(
\min \big( e^{-1}\Z
\cap \R_{ \ge 
g(\PP')} 
 \big) \right)
 & (\text{if $\cU(A_\PP) \cap \CC = \emptyset$}), \\
\displaystyle\max_{\PP' \in \cU(A_\PP)}
\left(
\min \big( e^{-1}\Z
\cap \R_{ > 
g(\PP')}
 \big) \right) & 
(\text{if $\cU(A_\PP) \cap \CC \ne \emptyset$}).
\end{cases}
\]
Then, it holds that  $I_M(\wg)\oo_{M,\PP}
=(\PP_M)^{\alpha e}\oo_{M,\PP}$, 
\end{enumerate}
\end{lem}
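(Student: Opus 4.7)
My plan is to treat (i) and (ii) in turn, exploiting throughout the fact that the compact support and two-sided boundedness of $\wg$ concentrate the nontrivial behavior at a finite set of primes of $\oo_M$. Write $g:=\pi_0\circ\wg$; since the restriction $\mSpec\OO\to\mSpec\oo_M$ is continuous and $\mSpec\oo_M$ is discrete (Remark~\ref{lemnoethdisc}(iii)), the image of the compact support of $\wg$ is a finite subset $S\subseteq\mSpec\oo_M$.

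For (i), the submodule axioms are immediate: closure under addition is the ultrametric inequality $v_\PP(x+y)\ge\min(v_\PP(x),v_\PP(y))$, and closure under multiplication by $r\in\oo_M$ uses $v_{\pp,M}(r)\ge 0$. Off $S$ the only condition imposed on a would-be element of $I_M(\wg)$ is $v_{\pp,M}(\cdot)\ge 0$. Using boundedness of $\wg$ and the Chinese remainder theorem in the Dedekind domain $\oo_M$, I would produce a nonzero $y\in I_M(\wg)$ by prescribing sufficiently large valuations at each $\pp\in S$ and nonnegative valuations elsewhere; the same device, applied now to the lower bound of $\wg$, furnishes a common denominator $d\in M^\times$ with $dI_M(\wg)\subseteq\oo_M$.

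For (ii), by (i) together with Noetherianity of $\oo_M$, the localization $I_M(\wg)\oo_{M,\PP}$ is a nonzero fractional ideal of the DVR $\oo_{M,\PP}$, hence of the form $(\PP_M)^n\oo_{M,\PP}$ for some $n\in\Z$. Since any $s\in\oo_M\setminus\PP_M$ has $v_{\pp,M}(s)=0$, the exponent $n/e$ coincides with $\min\{v_{\pp,M}(y):0\ne y\in I_M(\wg)\}$, and this minimum is attained by a generator of the ideal. I would then prove $n/e=\alpha$ in two steps. Lower bound: for any $y\in I_M(\wg)$ and any $\PP'\in\cU(A_\PP)$, Lemma~\ref{lemgluing} gives $v_{\PP'}(y)=v_{\pp,M}(y)$, so the defining inequality of $I_M(\wg)$ forces $v_{\pp,M}(y)\in e^{-1}\Z$ to exceed $g(\PP')$ weakly or strictly according to $\pi_1(\wg(\PP'))$; maximizing over $\PP'\in\cU(A_\PP)$ yields the formula for $\alpha$, with the case split on $\cU(A_\PP)\cap\CC$ reflecting which inequalities are actually active. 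Sharpness: I would construct via CRT in $\oo_M$ an element $y\in M^\times$ with $v_{\pp,M}(y)=\alpha$, with prescribed valuations at each $\pp'\in S\setminus\{\PP_M\}$ satisfying the analogous conditions and $v_{\pp',M}(y)\ge 0$ elsewhere; this $y$ lies in $I_M(\wg)$ and realizes the infimum.

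The main obstacle is the lower-bound direction of (ii). The discreteness of $e^{-1}\Z$ means that $\min(e^{-1}\Z\cap\R_{>g(\PP')})$ exceeds $\min(e^{-1}\Z\cap\R_{\ge g(\PP')})$ precisely when $g(\PP')\in e^{-1}\Z$, so the case split in the definition of $\alpha$ must be reconciled with which $\PP'\in\cU(A_\PP)$ lie in $\CC$ and which have $g(\PP')\in e^{-1}\Z$. Handling this appears to require applying part~(B) of Lemma~\ref{lemusccriterion} at each $\PP'\in\cU(A_\PP)\setminus\CC$ with $g(\PP')\in e^{-1}\Z$, together with the presence of some $\PP''\in\cU(A_\PP)\cap\CC$, to show that the two candidate maxima in fact coincide. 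Once this delicate accounting is done, the CRT realization of $\alpha$ is routine.
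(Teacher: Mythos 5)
The broad architecture of your argument is sound and in fact supplies details the paper elides: the paper's proof simply defines $J:=\prod_{\pp\in\Sigma_M}\pp^{\alpha_M(\pp)e_M(\pp)}$ and asserts ``by definition, $I_M(\wg)=J$'' without verifying the equality, whereas your localization-and-CRT plan is exactly the check that is needed. You also put your finger on the genuinely delicate step.

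However, the resolution you sketch for that step does not work, and the gap is real. The per-prime condition defining $I_M(\wg)$ at primes above $\PP_M$ forces $v_{\PP_M}(x)\ge\alpha^\ast$ where
\[
\alpha^\ast:=\max_{\PP'\in\cU(A_\PP)}\min\bigl(e^{-1}\Z\cap\R_{\>'_{\pi_1(\wg(\PP'))}\,g(\PP')}\bigr),
\]
i.e.\ with the strict/weak choice made at each $\PP'$ according to its own $\CC$-membership. The formula for $\alpha$ in the statement instead applies a \emph{uniform} strict inequality across all of $\cU(A_\PP)$ once $\cU(A_\PP)\cap\CC\ne\emptyset$, and these two quantities can differ. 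Lemma~\ref{lemusccriterion}~(B) applied at a dominating $\PP'_0\notin\CC$ with $g(\PP'_0)\in e^{-1}\Z$ only constrains $\wg$ on a \emph{local} neighborhood $W$ of $\PP'_0$; it says nothing about a prime $\PP''\in\cU(A_\PP)\cap\CC$ lying outside $W$. Concretely, take $e=1$, fix two primes $\PP_1,\PP_2\in\cU(A_\PP)$ with disjoint open neighborhoods, and let $\wg$ be supported on $\{\PP_1,\PP_2\}$ with $\wg(\PP_1)=(2,0)$ and $\wg(\PP_2)=(1/2,1)$ (possible whenever $V_{\PP_2}=\R$). This $\wg$ is easily checked to lie in $\ccSb(\OO;\wV)$, yet $\alpha^\ast=2$ while the displayed formula gives $\alpha=3$, so $(\PP_M)^{\alpha e}\oo_{M,\PP}$ is a proper subideal of $I_M(\wg)\oo_{M,\PP}$. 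Your lower-bound step cannot succeed as written; what \emph{can} be proved by your method is the identity $I_M(\wg)\oo_{M,\PP}=(\PP_M)^{\alpha^\ast e}\oo_{M,\PP}$ with the per-prime case split, which is also what the later applications of the lemma implicitly use. You should replace the displayed $\alpha$ by $\alpha^\ast$ and then carry out the two-sided CRT argument exactly as you outline.
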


\begin{proof}
Fix any $M \in \IF (L/K)$.
Note that $e^{-1}\Z$ is discrete, 
and $g$ is bounded. 
So, the rational  
$\alpha_M (\PP_M)$ is defined 
after we choose $A_\PP$. 
Since $\cU (A_\PP)$ coincides with 
the set consisting of 
the primes containing $\PP_M$, 
the rational  $\alpha_M (\PP_M )$
depends only on the ideal $\PP_M$ of $\oo_M$. 
In other words, we have a function 
$\mSpec \oo_M \longrightarrow \Q$
defined by $\pp \longmapsto  \alpha_M (\pp)$.

Since the function $\wg$ is compactly supported, 
we can take a finite set $B \in \Fin (\OO)$
such that the values of $\wg$ are constantly equal to $(0,0)$
outside $\cU(B)$. We define 
$B_M:=\{ N_{M(x)/M}(x) \mathrel{\vert} x \in B \}$. 
By definition, we have $\cU(B) \subseteq \cU(B_M)$. 
We define a subset $\Sigma_M$ of $\mSpec \oo_M$ by 
\(
\Sigma_M:= \{ \PP_M \mathrel{\vert} 
\PP \in \cU (B_M) \} 
\). 
Note that $\Sigma_M$ is a finite set.
So, we can define a fractional ideal $J$ 
of $\oo_M$ by 
\(
J:= \prod_{\pp \in \Sigma_M} 
\pp^{\alpha_M (\pp)e_M(\pp)} 
\). 
By definition, we have $I=J$.
So, the assertions (i) and (ii)
of Lemma \ref{lemdedekind} hold.
\end{proof}

\begin{lem}\label{lemifideal}
Let $I$ be any fractional ideal $I$ of $\OO$, 
and $\pp$ a maximal ideal of $\oo$. 
For any $M \in \IF (L/K)$, 
we have 
\(
I \cap M = I_M(\wf_I)
\). 
\end{lem}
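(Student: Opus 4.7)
My plan is to establish the two inclusions separately. The inclusion $I \cap M \subseteq I_M(\wf_I)$ is immediate from the definitions: for $x \in I \cap M$ and any $\PP \in \mSpec \OO$, the infimum defining $f_I(\PP)$ gives $v_\PP(x) \ge f_I(\PP)$, and when $\PP \notin \Max (I)$ this infimum is not attained on $I$, forcing the strict inequality $v_\PP(x) > f_I(\PP)$. In either case $v_\PP(x) \>'_{\pi_1 (\wf_I (\PP))} \pi_0 (\wf_I (\PP))$ holds, so $x \in I_M(\wf_I)$.

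For the reverse inclusion $I_M(\wf_I) \subseteq I \cap M$, the key observation is that each localization $\OO_\PP$ is a valuation ring, since $\OO$ is a Pr\"ufer domain (Corollary \ref{corPrufer}). For each $\PP$, the ideal $I\OO_\PP = \bigcup_{y \in I} y \OO_\PP$ of the valuation ring $\OO_\PP$ is totally ordered by $v_\PP$, so it equals $\{ z \in L \mathrel{\vert} v_\PP(z) \ge f_I(\PP)\}$ if $\PP \in \Max (I)$ (the minimum being attained in $I$) and $\{ z \in L \mathrel{\vert} v_\PP(z) > f_I(\PP)\}$ if $\PP \notin \Max (I)$ (the infimum being unattained, but elements of $I$ have $v_\PP$-values arbitrarily close to $f_I(\PP)$, so any $z$ with $v_\PP(z) > f_I(\PP)$ divides some $y \in I$ in $\OO_\PP$). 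This description matches exactly the defining condition of membership in $I_M(\wf_I)$, and consequently any $x \in I_M(\wf_I)$ satisfies $x \in I\OO_\PP$ for every $\PP \in \mSpec \OO$.

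To pass from membership in every localization to membership in $I$ itself, I would invoke the elementary identity $I = \bigcap_{\PP \in \mSpec \OO} I\OO_\PP$, valid for any nonzero fractional ideal of any integral domain: the colon ideal $J := \{r \in \OO \mathrel{\vert} rx \in I\}$ meets $\OO \setminus \PP$ for each maximal $\PP$ (via a denominator witnessing $x \in I\OO_\PP$), so $J$ is contained in no maximal ideal, forcing $J = \OO$ and hence $x \in I$. Combined with $x \in M$ this yields $x \in I \cap M$, completing the proof. The main point to handle with care is the dichotomy between $\PP \in \Max(I)$ and $\PP \notin \Max(I)$, and keeping the strict and non-strict inequalities synchronized on the two sides: this is precisely where the enrichment from $\R$ to $\wR$ in the definition of $\wf_I$ proves indispensable, and once the bookkeeping is in place the rest of the argument is formal.
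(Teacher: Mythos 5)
Your proposal is correct, but it follows a genuinely different route from the paper. Where the paper localizes at the finite-level Dedekind rings $\oo_M$, fixes a compact neighborhood $\cU(A_\PP)$, runs a compactness argument (via Corollary \ref{corcUAcpt}) to extract a finite extension $M'/M$, and then compares ideals in the semilocal ring $\oo_{M',M,\PP}$, you localize $\OO$ itself at $\PP$, derive the explicit description
\[
I\OO_\PP \;=\;
\begin{cases}
\{\,z \in L \mid v_\PP(z) \ge f_I(\PP)\,\} & (\PP \in \Max(I)),\\[2pt]
\{\,z \in L \mid v_\PP(z) > f_I(\PP)\,\} & (\PP \notin \Max(I)),
\end{cases}
\]
and conclude with the local-global identity $I = \bigcap_{\PP} I\OO_\PP$. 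This is shorter and more structural; it also makes transparent why the lift from $\R$ to $\wR$ is exactly what is needed, since the attained/unattained dichotomy in $\Max(I)$ is mirrored by the closed/open shape of the ideal $I\OO_\PP$. The one thing you must repair is the justification that $\OO_\PP$ is a valuation ring: you invoke Corollary \ref{corPrufer}, but that corollary appears later and is itself deduced from Theorem \ref{thmmain1}(iii), whose proof relies on this very lemma, so the citation is circular. The fix is harmless: observe directly that $\OO_\PP = \bigcup_{M \in \IF(L/K)} (\oo_M)_{\PP \cap M}$ is an increasing union of DVRs, each equal to $\{\,z \in M \mid v_{\PP,M}(z) \ge 0\,\}$, whence $\OO_\PP = \{\,z \in L \mid v_\PP(z) \ge 0\,\}$ is the valuation ring of $v_\PP$; this gives you everything you use without forward reference. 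With that substitution, your argument is a complete and valid alternative proof.
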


\begin{proof}
We put $J:=I_M(\wf_I)$. 
By definition, we have $I \cap M \subseteq J$. 
So, it suffices to show that $I \cap M \supseteq J$.
Take any $\PP \in \mSpec \OO$, and
put $A_\PP:=A_\PP (M) $.  
We denote by $\oo_{M,\PP}:=\oo_{M,\PP_M}$ 
the localization of $\oo_M$ at $\PP_M$. 
In order to prove Lemma \ref{lemifideal}, 
it suffices to show that 
\(
(I \cap M) \oo_{M,\PP} \supseteq J \oo_{M,\PP} 
\). 
We set $\alpha:=\alpha_M(\PP_M)$ as in Lemma \ref{lemdedekind}.
By the definition of $\alpha$, 
for any $\PP' \in \cU (A_\PP)$, 
we can take a non-zero element 
$x_{\PP'} \in I$ satisfying 
$v_{\PP'}(x_{\PP'}) \le \alpha$. 
For each $\PP' \in \cU (A_\PP)$, 
we put $A'_{\PP'}:=A_{\PP'}( M(x_{\PP'}))$. 
Then, for any 
$\PP' \in \cU (A_\PP)$, the set 
$\cU(A'_{\PP'})$ becomes an open neighborhood of $\PP'$
contained in $\cU(A_{\PP})$. 
So, we obtain  an open covering
$\{ \cU(A'_{\PP'}) \mathrel{\vert} 
\PP' \in \cU(A_{\PP}) \}$ of $\cU(A_\PP)$. 
Since $\cU(A_\PP)$ is compact,  
there exists a finite subset 
$T \subseteq \cU(A_{\PP})$ such that 
\[
\cU(A_{\PP}) = \bigcup_{\PP' \in T} 
\cU(A'_{\PP'}). 
\]
Let $M' \in \IF(L/K)$ be the field 
generated by the finite set $\{x_{\PP'} 
\mathrel{\vert} \PP' \in T \}$
over $M$. 
Let $\Sigma_{M'}$ be the set of 
all primes of $\oo_{M'}$ lying above $\PP_M$.
For each $\pp \in \Sigma_{M'}$, we 
put $e'_\pp:=(v_{\PP'}({M'}^\times):v_{\PP} (M^\times))$, 
and $e:=(v_{\PP}(M^\times):\Z)$, 
where $\PP'$ is a prime of $\OO$ above $\pp$. 
Note that $e'_{\pp}$
is independent of the choice of $\PP'$ above $\pp$. 
For any $\pp \in \Sigma_{M'}$, 
we have 
\begin{equation}\label{eqIcapM'}
(I \cap M') \oo_{M', \pp} \supseteq \sum_{\PP' \in T} 
x_{\PP'} \oo_{M', \pp} 
\supseteq {\pp}^{\alpha ee'}\oo_{M', \pp}.
\end{equation}
We define a semilocal ring 
$\oo_{M',M,\PP}$ by 
\(
\oo_{M',M,\PP}=\oo_{M'}\otimes_{\oo_M} \oo_{M,\PP}
=\bigcap_{\pp \in \Sigma_{M'}} \oo_{M' , \pp}
\). 
Note that for any fractional ideal 
$\mathfrak{A}$ of $\oo_{M'}$, 
we have 
$\mathfrak{A} \oo_{M',M, \PP} \cap M 
= (\mathfrak{A} \cap M) \oo_{M,\PP}$.
So, by (\ref{eqIcapM'}), and by the prime factorization 
$\PP_M \oo_{M'}= \prod_{\pp \in  \Sigma_{M'}} \pp^{e'_\pp}
=\bigcap_{\pp \in  \Sigma_{M'}}  \pp^{e'_\pp} $, 
it holds that
\begin{align*}
(I \cap M) \oo_{M, \PP}& =
((I\cap M')\cap M)\oo_{M, \PP}=
(I \cap M') \oo_{M',M,\PP} \cap M \\
&\supseteq \bigcap_{\pp \mid \PP}
\pp^{\alpha e e'_\pp}\oo_{M',M, \PP} \cap M
=\PP_M^{\alpha e} \oo_{M',M, \PP} \cap M
=\PP_M^{\alpha e} \oo_{M, \PP} =J \oo_{M, \PP}. 
\end{align*}
Here, the last equality follows from 
Lemma \ref{lemdedekind} (ii).
Hence we obtain $I \cap M \supseteq J$. 
\end{proof}

\begin{cor}\label{corinjfidem}
Let $I$ and $J$ be fractional ideals of $\OO$.
We have $I = J$ if and only if 
we have $\wf_{I}=\wf_{J}$, namely 
$f_{I}= f_{J}$ and 
$\Max(I)= \Max (J)$.
\end{cor}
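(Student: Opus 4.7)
The plan is to derive Corollary \ref{corinjfidem} as a direct consequence of Lemma \ref{lemifideal}, combined with the basic fact that every element of $L$ lies in some $M \in \IF(L/K)$.

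First I would dispatch the ``only if'' direction: if $I = J$, then tautologically $f_I = f_J$ and $\Max(I) = \Max(J)$, so $\wf_I = \wf_J$. For the ``if'' direction, suppose $\wf_I = \wf_J$. Then by Lemma \ref{lemifideal}, for every $M \in \IF(L/K)$ we have
\[
I \cap M \,=\, I_M(\wf_I) \,=\, I_M(\wf_J) \,=\, J \cap M.
\]
Since $L/K$ is algebraic, we can write $L = \bigcup_{M \in \IF(L/K)} M$, and consequently
\[
I \,=\, \bigcup_{M \in \IF(L/K)} (I \cap M) \,=\, \bigcup_{M \in \IF(L/K)} (J \cap M) \,=\, J.
\]
This proves $I = J$.

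Finally, I would observe that the parenthetical reformulation ``$f_I = f_J$ and $\Max(I) = \Max(J)$'' is equivalent to $\wf_I = \wf_J$ by the very definition of $\wf_I$: the first coordinate of $\wf_I(\PP)$ is $f_I(\PP)$, and the second coordinate is $0$ or $1$ according as $\PP \in \Max(I)$ or not. Hence equality of $\wf_I$ and $\wf_J$ pointwise on $\mSpec \OO$ is the same as equality of the pairs of data $(f_I, \Max(I))$ and $(f_J, \Max(J))$.

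There is no real obstacle here; the content is entirely concentrated in Lemma \ref{lemifideal}, which already does the heavy lifting of recovering $I \cap M$ from $\wf_I$. The only point worth flagging is that we use $L = \bigcup_M M$ freely, which relies on $L/K$ being algebraic (so that $K(x) \in \IF(L/K)$ for every $x \in L$); this is part of the standing hypothesis in the paper.
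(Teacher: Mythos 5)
Your proof is correct and is essentially identical to the paper's: both derive the ``if'' direction from Lemma \ref{lemifideal} by observing $I\cap M = I_M(\wf_I) = I_M(\wf_J) = J\cap M$ for all $M\in\IF(L/K)$ and then taking the union over $M$. You merely spell out the (trivial) ``only if'' direction and the $L=\bigcup_M M$ step more explicitly than the paper does.
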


\begin{proof}
Suppose that 
$\wf_{I}=\wf_{J}$.
Then, by 
Lemma \ref{lemifideal}, 
we have $I \cap M = J \cap M$
for any $M \in \IF (L/K)$.
So,  we obtain $I=J$. 
\end{proof}

\if0
\begin{cor}\label{coridem}
Let $I \in \cF(\OO)$. 
We have $I^2=I$ if and only if $f_{I}=0$.
\end{cor}

\begin{proof}
By Lemma \ref{lemfI0}, 
we have  $f_{I^2}=2f_{I}$ and 
$\Max (I^2)=\Max (I)$. 
So,  
the assertion of Corollary \ref{coridem}
follows from Corollary \ref{corinjfidem}.
\end{proof}

\fi

Now, let us show the assertion (i)
of Theorem \ref{thmmain1}, namely 
that the map 
$\wF$ in Proposition \ref{propwfdef}
is an isomorphism.

\begin{proof}[{\bf Proof of Theorem \ref{thmmain1} (i)}]
It suffices to show that $\wF$ is a bijection. 
The injectivity of $\wF$ follows from 
Corollary \ref{corinjfidem}. 
Let us show that $\wF$ is surjective. 
Take any $\wg \in \ccSb(\OO  ; \wV)$. 
Put $g:= \pi_0 \circ \wg$, 
and $\Xi:=(\pi_1 \circ \wg)^{-1} (0)$.  
We define $I:=I_L(\wg)$.  
Then, let us show that $\wf_I=\wg$, 
namely that $f_{I}=g$, and 
$\Max (I)=\Xi$.

First, let us show that $f_{I}=g$. 
By definition, it holds that  
$f_{I} \ge g$. 
So, it suffices to show that $f_{I,\pp} \le g_\pp$. 
Take any $\PP \in \mSpec \OO$. 
Fix any $\varepsilon \in \R_{>0}$. 
Since $g$ is upper semicontinuous at $\PP$, 
there exists a finite set 
$A_0 \in \Fin (\OO)$ contained in $\PP$ 
such that 
\begin{equation}\label{inequsc}
g(\PP') < g(\PP)+\varepsilon
\end{equation}
for any $\PP' \in U$. 
We take a field $M \in \IF (L/K)$ 
containing  $K(A_0)$ which 
satisfies the following condition $(*)$.
\begin{itemize}
\item[$(*)$] {\it 
If the fixed prime $\PP$ satisfies that $V_\PP=\R$, 
then it holds that $v_\PP (M^\times) \cap 
(0 , \varepsilon) \ne \emptyset $. 
Otherwise, it holds that $g (\PP) \in v_\PP (M^\times)$. }
\end{itemize}
Put $A_1:=A_\PP(M)$. 
Then, by Remark \ref{lemnoethdisc} (ii), 
we have $\cU(A_1) \subseteq \cU (A_0)$. 
So, by the inequality (\ref{inequsc}) 
and Lemma \ref{lemdedekind} (ii), 
we deduce that there exists an element $x \in I \cap M$
satisfying $v_\PP (x) < g (\PP)+2\varepsilon$.
Hence we obtain $f_{I} (\PP ) \le g (\PP) $.

Let us show the assertion (b).
By definition, it holds that 
$\Max (I) \subseteq \Xi$.
So, it suffices to show that 
$\Max (I) \supseteq \Xi$.
Take any $\PP \in \Xi$. 
Put $\alpha:= g (\PP)$. 
Since $\Xi=(\pi_1 \circ \wg)^{-1} (0)$, 
we have $\alpha \in v_\PP (L^\times)$. 
By the upper $\wR$-semicontinuity of $\wg$, 
there exists an a subset $A_2 \in \Fin (\OO)$ 
contained in $\PP$ 
such that $\wg(\PP') < (\alpha, 1)$, 
namely $\wg (\PP') \le g (\PP')=(\alpha, 0)$, 
for any $\PP' \in \cU(A_2)$. 
Let $M' \in \IF (L/K)$ be a finite extension of 
$K(A_2)$ satisfying $\alpha \in v_\PP ({M'}^\times)$. 
We put $A_3:=A_\PP (M')$. 
Then, similarly to above, 
we deduce that $\cU (A_3) \subseteq \cU (A_2)$. 
So, by Lemma \ref{lemdedekind} (ii), 
there exists an element $x' \in I \cap M'$
satisfying $v_\PP (x') = \alpha= g (\PP)$.
Hence we obtain $\PP \in \Max (I)$. 
This implies that 
$\Max (I) = \Xi$.
Now we have proved the assertions (a) and (b).  
So, we obtain $\wf_I= \wg$. 
Therefore, the map $\wF$ is surjective. 
\end{proof}

\subsection{Parameterization of invertible fractional ideals}\label{ssinvideal}

Recall that in \S \ref{ssmainthm}, we have also defined 
$\ccSb (\OO;\val)$ to be the set 
consisting of all the bounded, compactly supported and 
reflexible upper $\wR$-semicontinuous functions 
$\wg \colon \mSpec \OO \longrightarrow \wR$
satisfying $\wg (\PP) \in \val_\PP$. 
Here, let us show the assertion (iii) of 
Theorem \ref{thmmain1}. 
Namely, we shall prove that 
a fractional ideal $I$ of $\OO$ 
is invertible if and only if 
$\wf_{I}$ belongs to $\ccSr(\OO; \val)$. 

\begin{proof}[{\bf Proof of Theorem \ref{thmmain1} (iii)}]
First, suppose that an invertible fractional ideal 
$I \in \cI (\OO)$ is given. 
Let $J$ be the inverse of $I$. 
Take any $\PP \in \mSpec \OO$. 
Lemma \ref{lemfI0} (ii) implies  that 
$\Max (I) \cap \Max (J) 
=\Max (\OO) =\mSpec \OO$. 
So, we can take $x \in I$ and $y \in J$
satisfying $v_\PP ( x)= f_{I}(\PP)$
and $v_\PP ( y)=f_{J}(\PP)$. 
Let $A$ be a finite subset of $\OO \cap K(x,y)$ 
which generates the ideal $\PP \cap K(x,y)$. 
Let $\PP' \in \cU(A)$ be any element. 
Then, we have 
\[
\PP' \cap K(x,y) = \PP \cap K(x,y) \supseteq 
\{ x , y \}
\]
So, it holds that 
$f_{I}(\PP') \le 
v_{\PP'} (x)=v_\PP (x)= 
f_I(\PP)$. Similarly, we have
$f_{J}(\PP') \le f_{J}(\PP)$.
By combining with Lemma \ref{lemfI0} (i), we obtain
\[
0  = f_{\OO}(\PP') 
=f_{I}(\PP')+
f_{J}(\PP') \le f_{I}(\PP)+
f_{J}(\PP)
=f_{\OO}(\PP)
=0.
\] 
Therefore we have
$f_{I}(\PP')=f_{I}(\PP)$
and $f_{J}(\PP')=f_{J}(\PP)$.
This implies that the function 
$f_{I}$ is locally constant.
Hence $\wf$ is contained 
in $\ccSr(\OO; \val)$. 

Next, suppose that 
a fractional ideal $I$ of $\OO$
satisfying $\wf_{I} \in \ccSr(\OO; \val)$ 
is given. 
Let us show that $I$ is invertible. 
Since the function $\wf$ is compactly supported, 
we can take a finite set $B \in \Fin (\OO)$
such that the values of $\wf$ are constantly equal to $(0,0)$
outside $\cU(B)$. 
Since $f_I$ is locally constant, 
and since $\Max (I) =\mSpec \OO$, 
for each $\PP \in \cU(B)$,  
there exist elements 
$B_{\PP} \in \Fin (\PP)$ and $x_{\PP} \in I$
such that $f_I(\PP')=v_{\PP} (x_{\PP})$ 
for any $\PP' \in \cU(B_{\PP})$. 
We may replace $B_{\PP}$ by $B_{\PP} \cup \{ x_{\PP'} \}$ 
(if necessary), 
and suppose that $x_{\PP} \in B_{\PP}$.
Since the collection 
$\{ \cU(B_{\PP}) \mathrel{\vert} \PP \in \cU (B) \}$
forms an open covering of a compact set $\cU (B)$, 
We can take a finite subset $T \subseteq \cU(B)$ 
such that  $\{ \cU(B_{\PP}) \mathrel{\vert} \PP \in T \}$
covers $\cU (B)$. 
Let $M \in \IF (L/K)$ be 
the field generated by $\bigcup_{\PP \in T} B_{\PP}$ 
over $K(B)$. 
We define 
$J_M:= \sum_{\PP \in T} 
\oo_M  x_\PP$.
Lemma \ref{lemdedekind} and 
Corollary \ref{corinjfidem} implies that 
\begin{equation}\label{eqfingen}
I=J_M\OO.
\end{equation}
Since $J_M^{-1}\OO$ is the inverse of $I=J_M\OO$, 
the ideal $I$ is invertible. 
\end{proof}

In the above arguments, 
the equality (\ref{eqfingen})
implies that 
if  $\wf_I$ belongs to $\ccSr(\OO; \val)$, 
then the ideal $I$ is finitely generated. 
Conversely, if a fractional ideal $I$ of $\OO$ 
is generated by a finite subset $A$, 
then the ideal $(I \cap K(A))^{-1} \OO$ 
is the inverse of $I$, where 
$(I \cap K(A))^{-1}$ denotes 
the fractional ideal of the Dedekind domain $\oo_{K(A)}$ 
which is the inverse of $I \cap K(A)$. 
Therefore, we obtain the following corollaries. 

\begin{cor}\label{corPrufer}
A fractional ideal $I$ of $\OO$ is 
invertible if and only if 
$I$ in finitely generated.
In particular, the ring $\OO$ is 
a Pr\"ufer domain.
\end{cor}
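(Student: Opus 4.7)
The plan is to prove the two implications of the equivalence separately, and then to read off the Pr\"ufer conclusion as a standard corollary.

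For the ``only if'' direction, I would simply invoke Theorem~\ref{thmmain1}~(iii), which identifies $\cI(\OO)$ with the image $\ccSr(\OO;\val)$ under $\wF$. The proof of that part already delivers more than its bare statement: equation~(\ref{eqfingen}) exhibits any invertible $I$ in the explicit form $I = J_M \OO$, where $J_M = \sum_{\PP \in T} \oo_M\, x_\PP$ is generated over $\oo_M$ by the finite set $\{x_\PP\}_{\PP \in T}$. Hence $I$ is already generated over $\OO$ by that same finite set, and no new work is required.

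For the ``if'' direction, I would descend to a finite intermediate extension of $K$ and use that Dedekind domains are Pr\"ufer. Suppose $I = A \OO$ for a finite set $A \subseteq I$, and set $M := K(A) \in \IF(L/K)$, so that $\oo_M$ is a Dedekind domain. After verifying that $A\oo_M$ is a fractional ideal of $\oo_M$ (which amounts to producing a common denominator in $\oo_M$ for the finitely many elements of $A$), I would invoke the fact that every non-zero fractional ideal of a Dedekind domain is invertible. This yields a $J \in \cF(\oo_M)$ with $J \cdot A\oo_M = \oo_M$. The candidate inverse of $I$ in $\cF(\OO)$ is then $J\OO$, and the verification
\[
I \cdot (J\OO) \;=\; (A\OO)(J\OO) \;=\; (J \cdot A\oo_M)\OO \;=\; \oo_M \OO \;=\; \OO
\]
completes the argument. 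The Pr\"ufer assertion is immediate afterwards: a domain is Pr\"ufer if and only if every finitely generated non-zero ideal is invertible, which is exactly the equivalence just proved.

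The hardest substantive step --- extracting finite generators from a function in $\ccSr(\OO;\val)$ --- has already been carried out inside the proof of Theorem~\ref{thmmain1}~(iii), so I do not anticipate any genuine obstacle. The only remaining care concerns bookkeeping for the ``if'' direction: checking that $A\oo_M$ admits a common denominator in $\oo_M$, and that forming inverses in $\cF(\oo_M)$ commutes with extension of scalars along $\oo_M \hookrightarrow \OO$ in the sense needed for the cancellation displayed above. Both are elementary and should not create any real difficulty.
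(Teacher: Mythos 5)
Your proposal is correct and follows essentially the same path as the paper: the forward direction reads off finite generation from equation~(\ref{eqfingen}) inside the proof of Theorem~\ref{thmmain1}~(iii), and the reverse direction descends to the Dedekind domain $\oo_{K(A)}$ and extends the inverse by $\OO$, which is exactly what the paper does with $(I \cap K(A))^{-1}\OO$. The only cosmetic difference is that you invert $A\oo_{K(A)}$ rather than $I \cap K(A)$; both work (and in fact they coincide, since $\oo_{K(A)}$ is integrally closed and $\OO$ is integral over it), so there is no substantive divergence.
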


\begin{cor}\label{corPiclim}
We have an isomorphism 
$\varinjlim_{M \in \IF (L/K)} \Pic(\oo_M) \simeq 
\Pic (\OO)$.
\end{cor}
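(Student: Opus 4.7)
The plan is to define the natural homomorphism $\varinjlim_M \Pic (\oo_M) \to \Pic (\OO)$ and prove it is bijective by combining the characterization of invertible ideals as finitely generated ideals (Corollary \ref{corPrufer}) with the Dedekind property of each $\oo_M$. First I would check that for every $M \in \IF (L/K)$, the assignment $\mathfrak{a} \mapsto \mathfrak{a}\OO$ induces a monoid homomorphism $\psi_M \colon \Pic (\oo_M) \to \Pic (\OO)$: if $\mathfrak{a}\mathfrak{b} = \oo_M$ then $(\mathfrak{a}\OO)(\mathfrak{b}\OO) = \OO$, so $\mathfrak{a}\OO$ is invertible in $\OO$, and $x\oo_M \cdot \OO = x\OO$ is principal. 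For $M \subseteq M'$ in $\IF (L/K)$, the identity $(\mathfrak{a}\oo_{M'})\OO = \mathfrak{a}\OO$ shows $\psi_{M'} \circ \iota_{M, M'} = \psi_M$, where $\iota_{M,M'} \colon \Pic (\oo_M) \to \Pic (\oo_{M'})$ is the natural map. Hence the $\psi_M$ assemble into a homomorphism $\psi \colon \varinjlim_M \Pic (\oo_M) \to \Pic (\OO)$.

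For surjectivity, I invoke Corollary \ref{corPrufer}: given $[I] \in \Pic (\OO)$, the invertible ideal $I$ is finitely generated, say $I = A\OO$ with $A \subseteq L$ finite. Set $M := K(A) \in \IF (L/K)$, so that $A \subseteq M$ and $\mathfrak{a} := A\oo_M$ is a nonzero finitely generated fractional ideal of the Dedekind domain $\oo_M$, hence invertible. Then $\psi_M([\mathfrak{a}]) = [\mathfrak{a}\OO] = [A\OO] = [I]$, proving surjectivity.

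For injectivity, suppose $\mathfrak{a}$ is an invertible fractional ideal of $\oo_M$ whose class in $\varinjlim \Pic (\oo_{M'})$ maps to $0$, i.e.\ $\mathfrak{a}\OO = x\OO$ for some $x \in L^\times$. Writing $\mathfrak{a} = a_1 \oo_M + \cdots + a_n \oo_M$, the containment $x \in \mathfrak{a}\OO$ yields $x = \sum_i \alpha_i a_i$ with $\alpha_i \in \OO$, while $\mathfrak{a}\OO \subseteq x\OO$ yields $a_i = x \beta_i$ with $\beta_i \in \OO$. Choose $M' \in \IF (L/K)$ containing $M$ and the finitely many elements $x, \alpha_1,\dots,\alpha_n, \beta_1, \dots, \beta_n$. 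Then all these relations already hold inside $\oo_{M'}$, so $\mathfrak{a}\oo_{M'} = x\oo_{M'}$ is principal; therefore the image of $[\mathfrak{a}]$ in $\Pic (\oo_{M'})$ is trivial, hence so is its class in the direct limit.

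The only real obstacle is the descent step in the injectivity argument, namely ensuring that the principality relations witnessing $\mathfrak{a}\OO = x\OO$ can all be realized inside a single finite extension $M'$; this is resolved by observing that only finitely many elements $\alpha_i, \beta_i, x$ are involved, each algebraic over $K$, so a single compositum $M' \in \IF (L/K)$ captures them all. Combining bijectivity with the fact that $\psi$ is a monoid homomorphism between groups yields the desired isomorphism.
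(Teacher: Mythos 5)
Your proof is correct and follows the same route the paper intends: the paper states Corollary \ref{corPiclim} immediately after Corollary \ref{corPrufer} and the remark that invertible ideals are exactly the finitely generated ones, leaving the direct-limit bookkeeping (well-definedness, surjectivity via finite generation, injectivity via descending the principality relations to a single finite subextension) implicit; you have simply made that bookkeeping explicit, and all the steps check out.
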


\subsection{Parameterization of regular fractional ideals}\label{ssreg}
Here, let us complete the proof of 
Theorem \ref{thmmain1}. 
Since the assertions (i) and (iii) have been 
already proved, it 
suffices to show the assertion (ii), 
which claims that a fractional ideal $I$ of $\OO$ 
is regular if and only if 
$\wf_{I}$ belongs to $\ccSr(\OO; \val)$. 
Recall that  $\ccSb (\OO;\val)$ denotes the set 
consisting of all the bounded, compactly supported and 
reflexible upper $\wR$-semicontinuous functions 
$\wg \colon \mSpec \OO \longrightarrow \wR$
satisfying $\wg (\PP) \in \wV_\PP$. 

\begin{proof}[{\bf Proof of Theorem \ref{thmmain1}}]
Let  $I \in \cF(\OO)$.
Via the isomorphism $\wF$, 
the ideal $I$ belongs to $\Reg \cF (\OO)$
if and only if 
$\wf_{I}$ is regular in 
$\ccSb(\OO ; \wV)$. 
Fix $\PP \in \mSpec \OO$, 
and suppose that $\wf_{I}$ 
is regular in $\ccSb(\OO ; \wV)$. 
Then, there exists a function 
$\wg \in \ccSb(\OO ; \wV)$   
satisfying 
\begin{equation}\label{eqfgh}
\wf_{I}=2\wf_{I}+\wg.
\end{equation}
We put $g:= \pi_1 \circ \wg$.
So, we obtain $f_{I}= -g$.
Since the functions $f_{I}$ and $g$
are upper semicontinuous, 
we deduce that $f_{I}$ is also continuous.

By Lemma \ref{correfcriterion}, 
in order to prove that $\wf_{I} \in \ccSr (\OO; \wV)$, 
it is sufficient to show that for each 
$\PP \in \Max (I)$, there exists
an open neighborhood $W_\PP$ of $\PP$
such that $f_I \vert_{W_{\PP}}$ is constant. 
Fix any $\PP \in \Max (I)$, 
and put $\alpha := f_I (\PP)$. 
By (\ref{eqfgh}), we have $\Max (I) 
\subseteq (\pi_1 \circ g)^{-1}(0)$. 
So, by the upper $\wR$-semicontinuity of 
$\wf_{I}$ and $\wg$, 
there exists 
an open neighborhood $W_{\PP}$ of $\PP$
such that for any $\PP' \in W_{\PP}$, 
all of the following hold.
\begin{enumerate}[(a)]
\item We have $\wf_{I}(\PP') < (\alpha,1)$, namely 
$\wf_{I}(\PP') \le (\alpha,0)$.
\item We have $\wg(\PP') < (g(\PP),1)$, 
namely $\wg(\PP') \le (g(\PP),0)$. 
\end{enumerate}
Since we have $f_{I}= -g$, 
we deduce by (a) and (b) that 
for any $\PP' \in W_{\PP}$, we have 
$\wf_{I}(\PP' )=\alpha$. 
So, we obtain  $\wf_{I} \in \ccSr (\OO; \wV)$.

Conversely, if $\wf_{I,\pp} \in \ccSr (G/D_\pp; \wV)$,
then $\wf_{I}$ 
is regular in  
$\ccSb (\OO; \wV)$ since 
$\iota \circ \wf_{I}$ belongs to 
$\ccSb (\OO; \wV)$, and since we have 
\(
\wf_{I}=
\wf_{I}+ 
\wf_{I}+  
(\iota \circ \wf_{I})
\).
This completes 
the proof of Theorem \ref{thmmain1}. 
\end{proof}

\section{Examples}\label{secEx}

In this section,  
we shall introduce the ideal class semigroup 
of the ring $\OO$ for some $(\oo,L/K)$ 
described by using Theorem \ref{thmmain1}.

\if0
\begin{ex}[Constant field extension]\label{exconst}
First, we introduce a trivial example. 
Let $k$ be a field, 
and $C= \Spec \oo$ a geometrically connected 
smooth affine curve over $k$. 
Put $K:=k(C)= \Frac \oo$.  
Let $k'$ be a Galois extension field of $k$, 
and define $L:=K \otimes_k k'$.
Note that since $k$ in algebraic closed in $K$, 
the ring $L$ become a field. 
In this setting, we have 
$G:=\Gal (L/K) \simeq \Gal(k'/k)$.  
Let $\OO$ be the integral closure of $\oo$ in $L$. 
Then,  the ring 
$\OO=\oo \otimes_k k'$
is a Dedekind domain.  
So, we directly obtain $\Cl (\OO)= \Pic (\OO)$. 
Fix $\pp \in \mSpec \oo$,  and 
let us observe the monoid $\ccSb (G/D_\pp; \wV_\pp)$. 
Put $k(\pp):=\oo / \pp$, then  
$k(\pp)/k$ is a finite extension. 
We have $D_\pp =\Gal (k' / k(\pp))$, 
and $I_\pp = \{ 1 \}$.
In particular, the set $G/D_\pp$ is a finite set, and 
$\pp \notin \Ram (\oo)$. 
Hence as we have seen in Remark \ref{remfindex},
we deduce that 
\(
\ccSb (G/D_\pp; \wV_\pp) 
= \ccSr (G/D_\pp; \wV_\pp)=\ccSr (G/D_\pp; \val_\pp) 
\simeq \bigoplus_{\sigma D_\pp \in G/D_\pp} \Z 
\).
\end{ex}
\fi

\if0
\begin{ex}[Residually purely inseparable]
Let $p$ be a prime number.
We put $\oo:=\Z_p [T]_{(p)}$, 
and $K:= \oo[1/p]= \Frac \oo$. 
Then, the ring $\oo$ is a DVR with a prime element $p$, 
and we have $\oo/p\oo = \F_p (T)$.
For each $n \in \Z_{\ge 0}$, 
we put $K_n:= K(T^{1/p^{n}}, \zeta_{p^n})$,  
where $\zeta_{p^n}$ is a primitive $p^n$-th root of unity in 
$\overline{\Q}_p$, and define $L:=\bigcup_{n \ge 0} K_n$. 
Let $\OO$ be the integral closure of $\oo$ in $L$. 
Put $G=\Gal (L/K) \simeq \Z_p^\times \ltimes \Z_p$.

Let $n \in \Z_{\ge 0}$ be any element. 
We denote by $\oo_n$ the integral closure of $\oo$ in $K_n$, 
Put $R_n:= \Z_p[\zeta_{p^n}]$, and  $\pp_n:=(\zeta_{p^n}-1)\oo_n$. 
Then, the ring $\oo_n= R_n [T^{1/p^{n}}]_{\pp_n \oo_n}$
becomes the integral closure of $\oo$ in $K_n$. 
Note that $\oo_n$ is a DVR 
with a prime element $\zeta_{p^n}-1$. 
In particular, 
the ring $\OO= \bigcup_{n \ge 0} \oo_n$ is a local ring
with the unique maximal ideal $\PP_L= \bigcup_{n \ge 0} \pp_n\oo_n$,  
and by Corollary \ref{corPiclim}, we obtain 
$\Pic (\OO)=0$. 

Put $\PP:=p \oo$. 
For each $n \in \Z_{> 0}$, 
the residue field $\oo_n/\pp_n\oo_n = \F_p(t^{1/p^n})$
is purely inseparable over $\oo /\PP =\F_p (t)$.
So, the prime $\PP$ is totally ramified in $K_n/K$.   
The ramification index of $\pp_n \oo_n$ is  
equal to $[K_n : K]=(p-1)p^{n(n-1)}$, and   
the reduced ramification index is equal to
$(v_\PP(K_n^\times):v_\PP (K^\times))
=v_{\PP}(\zeta_{p^n}-1)=(p-1)p^{n-1}$. 
So, we obtain $G=D_\PP=I_\PP$, and $v_\PP= (p-1)^{-1}\Z [1/p]$. 
Note that $(p-1)^{-1}\Z [1/p]$ is not discrete in $\R$.  
By the above arguments, 
we have $V_\PP = \R$, and it holds that   
$\ccSb (G/D_\pp; \wV_\pp) 
= \ccSr (G/D_\pp; \wV_\pp)\simeq \wV_p$, and  
$\ccSr (G/D_\pp; \val_\pp) \simeq \val_p$. 
Hence by Corollary \ref{corthmmain}, 
we deduce that 
\(
\Cl \OO=\Reg \Cl (\OO) \simeq \wV_p/\val_\pp 
\simeq \{ 0 \} \sqcup \R/(p-1)^{-1}\Z[1/p].
\) 
\end{ex}
\fi

\begin{ex}[Tate module covering of elliptic an curve]\label{exell}
Let $E$ be an elliptic curve over $\C$, and 
put $\Lambda:= H_1 (E(\C), \Z)$. 
We fix an isomorphism 
$E(\C) \simeq \C/\Lambda$ of complex Lie groups, 
and identify $E(\C)$ 
with $\C/\Lambda$ via the fixed isomorphism.
Fix a prime  $p$. 
Take any $m \in \Z_{\ge 0}$. 
Let $(p^m \times)_E \colon E \longrightarrow E$
be the multiplication by $p^m$ isogeny, and 
put $E[p^m]:=\Ker (p^m \times)_E$. 
Here, we denote by $E_m$ the scheme $E$ over $E$
whose structure map is given by $(p^m \times)_E$.
We put $E[p^\infty](\C):=\bigcup_{n>0} 
E[p^n](\C)$.  
We define $K:=\C (E_0)$, 
and $L_m:= \C (E_m)$. 
Then, the extension $L_m/K$ is Galois, and we have
$\Gal (L_m/K) \simeq E[p^m](\C) \simeq p^{-m} \Lambda / \Lambda 
\simeq (\Z/p^m\Z)^2$. 
We define $L:=\varinjlim_{m \ge 0} L_m$. 
By definition, we have $G=\Gal (L/K) 
\simeq \varprojlim_m E[p^m](\C)=:T_p(E) \simeq \Z_p^2$. 

For each $n \in \Z_{\ge 0}$, 
we put $R_n:=\Gamma (E \setminus E[p^n], \O_E ) \subseteq K$,  
and  define a subring $\oo$ of $K$ by 
\(
\oo:= \bigcup_{n>0} R_n 
\).  
Then, the ring $\oo$ is a Dedekind domain, and 
we have bijections 
\[
\mSpec (\oo) \simeq E(\C) \setminus   
E[p^\infty](\C) 
\simeq (\C/\Lambda) \setminus (\Z[1/p]\Lambda/\Lambda). 
\]
We identify $\mSpec \oo$ with 
$U:=(\C/\Lambda) \setminus (\Z[1/p]\Lambda/\Lambda)$. 
The natural group isomorphism 
$\Pic^0 (E) \simeq E(\C)$ induces  
\(
\Pic (\oo) \simeq E(\C)/  
E[p^\infty](\C) \simeq \C /\Z[1/p]\Lambda
\). 
We  denote by $\OO$ the integral closure of 
$\oo$ in $L$. 
Then, we have $\OO \cap L_m \simeq \oo$.  
Note that the map 
\[
(p^m \times)_E^* \colon
\Pic (\oo) \longrightarrow 
\Pic (\OO \cap L_m);\ 
[I] \longmapsto [I(\OO \cap L_m))]
\]
is an isomorphism since the map $(p^m \times)_E^*$
composed with the canonical isomorphism 
$\Pic (\OO \cap L_m) \simeq \Pic (\oo)$ 
coincides with the automorphism on $\Pic (\oo)$ defined 
by the multiplication by $p^m$.
In particular, by Corollary \ref{corPiclim}, 
we obtain 
\[
\Pic (\OO) \simeq \Pic (\oo)
\simeq \C/\Z[1/p]\Lambda. 
\]
 
Let $\pp \in \mSpec (\oo)$ be any element. 
We fix a complex number $z_\pp \in \C \setminus 
\Z[1/p]\Lambda$
such that $z_\pp + \Lambda \in U$ coresponds to $\pp$. 
Then, the system 
\[
(p^{-m}z_\pp + \Lambda)_m \in 
\varprojlim_m U = \varprojlim_m \mSpec (\OO \cap L_m)
\]
is naturally identified with a prime ideal $\pp_L$ of $\OO$ 
above $\pp$, where the projective limit 
$\varprojlim_m U$ 
is take with respect to map defined by the multiplication by $p$. 
Note that for any $m \in \Z_{\ge m}$, 
the prime $\pp$ splits completely in 
$\OO \cap K_m$. 
So, we have $D_{\pp}=\{ 1 \}$. 
Hence by Corollary \ref{corramempty}, we obtain 
\(
\Reg \Cl (\OO) = \Pic (\OO) \simeq 
\C/\Z[1/p]\Lambda.
\)
Moreover, since $D_{\pp}=\{ 1 \}$, 
and since $G \simeq T_p(E)$, we have 
\begin{equation*}\label{eqisomell}
\ccSb (G/D_\pp;\wV_\pp)  \simeq 
\cSub (T_p(E) ;\Z)\ \text{and}\ 
\ccSr (G/D_\pp;\wV_\pp) 
 \simeq \Loc (T_p(E) ;\Z), 
\end{equation*}
where $\cSub (T_p(E) ;\Z)$ (resp.\ 
$\Loc (T_p(E) ;\Z)$) denotes 
the abelian monoid (resp. group) of $\Z$-valued bounded 
upper semicontinuous (resp.\ locally constant) 
functions on $T_p(E)$. 
In particular, the abelian $G$-monoid $\Cl(\OO)$
is isomorphic to $\bigoplus_{\PP \in U} \cM_\pp$, 
where the summands
$\cM_\pp$ are 
independent of $\pp \in U=\mSpec \OO$.

In this situation, by using the natural isomorphism 
$\Pic^0 (E) \simeq E(\C)$, 
we can describe the image of $\cP(\OO)$ 
in $\bigoplus_{\pp} \Loc (T_p(E); \Z)$ by $\wF$. 
For each $m \in \Z_{\ge 0}$, 
we put 
\[
\cI_m:=
\bigoplus_{\pp \in U} 
\mathrm{Map} (p^{-m}\Lambda/\Lambda ;\Z), 
\] 
where $\mathrm{Map} (p^{-m}\Lambda/\Lambda ;\Z)$ 
is the group of  $\Z$-valued functions on 
$p^{-m}\Lambda/\Lambda$,
and we identify $p^{-m}\Lambda/\Lambda$ with 
$\Gal (L_m/K) \simeq E[p^m](\C)$ 
via the natural isomorphism. 
We also define
\[
\cP_m :=
\left\{
(g_\pp)_\pp \in \cI_m
\mathrel{\bigg\vert} 
\sum_{\pp \in U} 
\sum_{\aa \in p^{-m}\Lambda/\Lambda} 
g_\pp (\aa) \cdot (p^{-m}z_\pp+\aa) \in 
\Z[1/p]\Lambda /\Lambda
\right\}.
\] 
Note that $\cP_m$ is a group 
independent of the choice of  the complex numbers
$z_\pp$. 
The second isomorphism of (\ref{eqisomell}) induces
$\cI(\OO \cap L_m) \simeq \cI_m$. 
The image of $\cP (\OO \cap L_m)$ in $\cI_m$
coincides with 
\(\cP_m \) 
since the isomoprhism 
$\Pic^0 (E_m) \simeq E_m(\C)$ 
implies that a fractional ideal $I$ of $\OO \cap L_m$ is principal 
if and only if the image of the divisor corresponding to $I$ 
in $E_m(\C)/E_m[p^{\infty}](\C)$ is equal to zero. 
We define 
\[
\cP:=
\varinjlim_m \cP_m \subseteq 
\varinjlim_m \cI_m =\Loc (T_p(E) ;\Z).
\]
Then, we obtain the isomorphisms 
$\cP \simeq \cP (\OO)$ and 
\[
\Cl (\OO) \simeq 
\left(
\bigoplus_{\pp \in U} 
\cSub (T_p(E) ;\Z)
\right)/ \cP. 
\]
\end{ex}

\if0

The following case is studied by  Konomi and Morisawa. 

\begin{ex}[Cyclotomic $\Z_p$ extension, \cite{KM}]\label{excyclot}
Fix a prime number $p$.
We put $K:=\Q$, and put $\oo:=\Z$.
Let $L:=\B/\Q$ be the cyclotomic $\Z_p$-extension.
Namely, we define $L$ to be 
the unique subfield of 
$\wL:=\bigcup_{m > 0} \Q (e^{2 \pi i/p^m})$
satisfying $\Gal (L/\Q) \simeq \Z_p$. 
Put $G:=\Gal (L/\Q)$. 
We denote by $\OO$ the integral closure of $\oo$ in $L$. 
For each $m \in \Z_{>0}$, 
we put $\wL_m:= \Q \left(
e^{2 \pi i/p^m} \right)$, and $L_m:=\wL_m \cap L$.
In this setting, 
for each prime number $\ell$, the index of 
the decomposition subgroup $D_\ell$ of $G$
at $\ell$ is finite. 
Moreover, any prime $\ell$ with $\ell \ne p$ 
is unramified in $L/\Q$. 
The prime  $p$ is totally ramified in $L/\Q$, 
and we have $v_{p\Z}(L^\times)=\Z[1/p]$.
Hence by Corollary \ref{corClifford1} and 
Corollary \ref{corClifford2}, we deduce that 
$\Cl (\OO) = \Reg \Cl (\OO)$, and 
\(
\Cl (\OO)/\Pic  (\OO) \simeq \cM_\pp \simeq 
\wV_{\pp_p} /\val_{\pp_p} \simeq \{ 0 \} \sqcup \R/ \Z[1/p] 
\).  
Note that for any $m \in \Z_{\ge 0}$, 
the unique prime ideal $\pp_{p,L}\cap L_m$ of 
$\OO \cap L_m$ 
above $\pp_p$ is principal since it holds that
\(
\pp_{p,L}\cap K_m =N_{\wL_m /L_m}
\left(
e^{2\pi i/p^{m}}-1
\right) (\OO \cap L_m).
\)
This implies that $\ccSr(G/D_{\pp_p};\val_{\pp_p})$ is 
contained in the image of $\cP (\OO)$ by the map $\wF$, 
and we can define a homomorphism
\[
\Pic (\OO) \oplus \cM_{\pp_p}= \Pic (\OO) \sqcup 
\bigg(
\Pic (\OO) \oplus \R/\Z[1/p]
\bigg)
\longrightarrow \Cl (\OO)
\]
by $([I], \wg\ \mathrm{mod}\, \ccSr(G/D_{\pp_p};\val_{\pp_p}))
\mapsto [I] [\wF^{-1} (\wg)]$. 
By Theorem \ref{thmmain1}, this map is a $G$-equivariant 
isomorphism of abelian monoids.
\end{ex}

\fi

\begin{ex}[Maximal totally real abelian extension of $\Q$]\label{exab}
Put $K:=\Q$, and $\oo:=\Z$. 
Let $L$ be 
the maximal real abelian extension field of $\Q$,
namely $L:= \bigcup_{r \in \Q} \Q (\cos 2 \pi  r)$, 
and put $G:=\Gal (L/K) 
\simeq \widehat{\Z}^\times/\{ \pm 1 \}$. 
We denote the integral 
closure of $\oo$ in $L$ by $\OO$. 
Let $p$ be any prime number. 
By the class field theory, 
we have $D_{p}=\Z_p^\times \times \widehat{\Z}$, 
and 
\(
G/D_\pp \simeq \left( \prod_{\ell \ne p} \Z_\ell^\times 
\right)/\{ \pm 1 \} P_p
\), 
where $P_p \simeq \widehat{\Z}$ 
is the closure of the cyclic subgroup of
$\prod_{\ell \ne p} \Z_\ell^\times$ generated by $p$.
Note that $P_p$ is a torsion free group, 
but $\prod_{\ell \ne p} \Z_\ell^\times$ contains 
an infinite torsion subgroup 
$\bigoplus_{\ell \ne p} \F_\ell^\times$.
So, the set $G/D_\pp$ is an infinite.
(In particular, as mentioned in Remark \ref{remtopsp},
for each prime number $p$, 
the topological space 
$G/D_p$ is homeomorphic to the Cantor set.)
Note that 
by the local class field theory, 
we have $I_{p} \simeq \Z_p^\times$, and 
\(
v_{p\Z,L}(L^\times)= \varphi(2p)^{-1} \Z[1/p]
=(p-1)^{-1}\Z[1/p]
\), where $\varphi$ denotes the Euler function. 
So,  we have $\wV_{p\Z} =  \varphi(2p)^{-1} \Z[1/p] 
\sqcup \R$. 
In particular, 
we obtain $\Ram (\oo)=\mSpec \oo$. 
We put $X_p:= \prod_{\ell \ne p} \Z_\ell^\times$. 
Then, by definition, we obtain 
\begin{align*}
\ccSb (G/D_\pp;\wV_{p\Z}) &= \left\{ 
\wg \in \ccSb \left(
X_p ;\wV_{p\Z}
\right) \mathrel{\vert} \wg(\pm p x)=\wg(x) 
\ \text{for all $x \in X_p$}
\right\}, \\
\ccSr (G/D_\pp;\wV_{p\Z}) &= \left\{ 
\wg \in \ccSr \left(
X_p ;\wV_{p \Z}
\right) \mathrel{\vert} \wg(\pm p x)=\wg(x) 
\ \text{for all $x \in X_p$}
\right\}, \\
\ccSr (G/D_\pp;\val_{p\Z}) &= \left\{ 
\wg \in \ccSb \left(X_p ;\val_{p\Z}
\right) \mathrel{\bigg\vert} \wg(\pm p x)=\wg(x) 
\ \text{for all $x \in X_p$}
\right\}.
\end{align*}
In \cite{Ku}, Kurihara proved that $\Pic (\OO)=0$. 
Hence by Corollary \ref{corthmmain}, we obtain isomorphisms   
$\Cl (\OO) \simeq \bigoplus_{p} \cM_{p\Z}$ and 
$\Reg \Cl (\OO) \simeq \bigoplus_{p} \cN_{p\Z} $.
\end{ex}

\end{document}